\newtheorem{theorem}{Theorem}
\newtheorem{lemma}{Lemma}
\title{\bf Large-Sample Properties of Blind Estimation of the Linear Discriminant Using Projection Pursuit}
\author{Una Radoji\v{c}i\'c\\
    Vienna University of Technology, {Austria}\\
    \texttt{una.radojicic@tuwien.ac.at}\\
\And
Klaus Nordhausen \\
  University of Jyv\"askyl\"a, {Finland}\\
  \texttt{klaus.k.nordhausen@jyu.fi} \\
  \And
Joni Virta  \\
 University of Turku, {Finland}\\
  \texttt{jomivi@utu.fi} \\

  }
\begin{document}

\maketitle

\bigskip
\begin{abstract}
We study the estimation of the linear discriminant with projection pursuit, a method that is blind in the sense that it does not use the class labels in the estimation. Our viewpoint is asymptotic and, as our main contribution, we derive central limit theorems for estimators based on three different projection indices, skewness, kurtosis and their convex combination. The results show that in each case the limiting covariance matrix is proportional to that of linear discriminant analysis (LDA), an unblind estimator of the discriminant. An extensive comparative study between the asymptotic variances reveals that projection pursuit is able to achieve efficiency equal to LDA when the groups are arbitrarily well-separated and their sizes are reasonably balanced. We conclude with a real data example and a simulation study investigating the validity of the obtained asymptotic formulas for finite samples.
\end{abstract}
\bigskip



\section{Introduction}\label{sec:introduction}

Classification and clustering are two central themes in modern data analysis and can be seen, respectively, as the ``unblind'' and ``blind'' versions of the same problem: In classification the group memberships, or labels, of the training data points are known and the objective is to use the training data to form a classification rule for future observations, some of the standard methods including, e.g., linear discriminant analysis, support vector machines and random forests, see \cite{friedman2001elements}. Whereas in clustering, no labels for the data points are known but we postulate that a reasonable grouping exists and aim to find it, with, e.g., $k$-means clustering or spectral clustering, see \cite{friedman2001elements,von2007tutorial}. 

In this paper, we work under a clustering context and the assumption that the data admit a natural grouping but that their labels are indeed unknown to us. In their seminal work, \cite{pena2001cluster} studied in this setting the use of \textit{projection pursuit} (PP), a general family of methods searching for a projection direction that maximizes the value of the so-called projection index, see, e.g., \cite{huber1985projection,BoltonKrzanowski2003,bickel2018projection,fischer2019repplab} and the references therein. Namely, denoting the within-class covariance matrix by $\boldsymbol{\Sigma}$ and the two group means by $\boldsymbol{\mu}_1, \boldsymbol{\mu}_2$, \cite{pena2001cluster} established that using kurtosis as the projection index in projection pursuit allows the ``blind'' estimation of the projection direction $\boldsymbol{\theta} := \boldsymbol{\Sigma}^{-1} (\boldsymbol{\mu}_2 - \boldsymbol{\mu}_1)$ that is used in linear discriminant analysis to construct the optimal Bayes classifier, in the full absence of any label information. In other words, projection pursuit essentially allows conducting LDA in a blind fashion to recover the subspace that optimally separates the two groups. Afterwards, various clustering methods can then be applied to the projected data to conduct efficient clustering.

While very interesting, the result of \cite{pena2001cluster} raises a natural question regarding the efficiency of the procedure. Namely, how much does one lose by not knowing the labels and relying on projection pursuit compared to using LDA to recover the same direction $\boldsymbol{\theta}$ when the group memberships are known? This is the main question we study in the current paper, working, for simplicity, under the assumption of two-group normal mixtures. Our approach is asymptotic in nature and we perform the comparison through the limiting covariance matrices of the estimators in question. In particular, we show that the limiting covariance matrices of projection pursuit and LDA are proportional, allowing us to conduct the comparisons simply through the corresponding constants of proportionality.

Asymptotic results for general projection indices have been derived earlier in the context of \textit{independent component analysis} (ICA), see, e.g., \cite{ollila2009deflation,dermoune2013fastica,miettinen2015fourth,virta2016projection}. In ICA, one assumes that the observed random $p$-vector $\textbf{x}$ is an \textit{independent component model}, i.e., there exists a full rank $p \times p$ matrix $\boldsymbol{\Gamma}$ such that $\boldsymbol{\Gamma} \{ \textbf{x} - \mathrm{E}(\textbf{x}) \}$ has independent components. The IC model is a rather wide family of distributions and, in particular, contains our model of choice, the multivariate normal mixture (this, apparently novel, result is given as Lemma \ref{lem:ic_mixture_intersection} in the supplementary material). This connection implies that the results of the current paper are intimately related to \cite{virta2016projection} who considered (in the context of ICA) the same projection indices as we do here. However, we remark that our contributions surpass those of \cite{virta2016projection} in two critical regards: 1) \cite{virta2016projection} derived only the asymptotic variances of the ICA parameters (ignoring their covariances), whereas we give the full limiting distribution of the estimated projection direction. Besides completing the asymptotic story, knowledge of the full distribution is crucial with respect to the comparison of PP and LDA as it reveals that the limiting covariance matrix of PP is exactly proportional to the limiting covariance of LDA, see Theorems \ref{theo:asnorm_unblind}--\ref{theo:asnorm_blind_3} later on. 2)  From a technical viewpoint, the derivation of the convergence rates of the estimators was in Virta et al. (2016) left implicit and our proofs provide a rigorous treatment of this. In particular, to guarantee well-defined Taylor expansions of the objective functions, we need to establish the almost sure convergence of the projection pursuit estimates and, as far as we are aware, such results have not been given previously either in ICA or PP-literature.

While kurtosis is the most popular choice for the projection index in projection pursuit, also several alternatives are commonly used. In particular, skewness is a somewhat standard choice, see, for example, \cite{loperfido2018skewness}, and was shown in \cite{loperfido2013skewness} to have the same property of being able to find the optimal projection direction without the label information as possessed by kurtosis. As such, we study also skewness-based projection pursuit in the current work. However, as shown by \cite{pena2001cluster, loperfido2013skewness}, for both kurtosis and skewness there exist particular values of the mixing proportion under which the two indices are unable to recover the optimal projection direction (for example, skewness fails to produce a consistent estimate of $\boldsymbol{\theta}$ when the two groups have equal proportions). These drawbacks can be mitigated by combining both cumulants into a single projection index, in a form of a weighted linear combination. Our results then show that, with a proper choice of weighting, a rather efficient blind competitor for the LDA-based unblind estimator can be obtained. Indeed, in the extreme case where the groups are arbitrarily well-separated and their sizes are reasonably balanced, projection pursuit is able to achieve equal efficiency with LDA. As remarked in the previous paragraph, the asymptotic properties of the hybrid index have been studied also earlier, in the context of independent component analysis, in \cite{virta2016projection}.

We note that despite the theoretical guarantees of projection pursuit, the most common blind method for revealing clusters is still arguably PCA, see, e.g., \cite{jolliffe2002principal}. However, it is also well known that PCA does not, in general, yield a consistent estimator of the linear discriminant direction. A standard example demonstrating this is the extreme case where the within-group covariance matrix $\boldsymbol{\Sigma}$ is heavily concentrated on a direction orthogonal to the difference of the group means $\boldsymbol{\mu}_2 - \boldsymbol{\mu}_1$. In such a case, the projections of the two group means onto the first principal component direction overlap, making clustering based on the direction impossible. Hence, due to its unreliability in estimating the linear discriminant, PCA cannot really be seen as a blind estimator of the separating direction and, as such, we do not include it in the comparisons in the current paper. However, we have still included, for completeness, equivalent asymptotic results for PCA as we state for the other methods, and these are given in Appendix \ref{sec:PCA}.

The rest of the manuscript is organized as follows. In Section 2 we derive the asymptotic behavior of three estimators of the linear discriminant direction: LDA and kurtosis- and skewness based projection pursuit. A short comparison of the results is also presented. In Section \ref{sec:combination}, we give the corresponding results for projection pursuit based on a weighted combination of skewness and kurtosis and conduct a more extensive set of asymptotic comparisons between all considered methods. Simulation studies exploring both the finite-sample performance of the methods and the applicability of our asymptotic results to practice are given in Section \ref{sec:simulations}, while the performance and the applicability of presented methods to a real data example, as well as the comparison to the PCA are given in Section~\ref{sec:example}. Finally, we conclude with some discussion in Section \ref{sec:discussion}. All proofs of the technical results are postponed to Appendix
\ref{sec:proofs}.

\section{Estimation of the linear discriminant}\label{sec:estimation}

Let $(\Omega, \mathcal{F}, \mathbb{P}) $ be a probability space. Throughout the following, we assume that the $(p + 1)$-dimensional pair $(\textbf{x}, y)$ obeys the following model:
\begin{align}\label{eq:xy_model}
    y  \sim \mathrm{Ber}(\alpha_1) \quad \mbox{and} \quad \textbf{x} \mid y \sim \mathcal{N}_p \{ y  \boldsymbol{\mu}_1 + (1 - y) \boldsymbol{\mu}_2, \boldsymbol{\Sigma} \},
\end{align}
for $0 < \alpha_1 < 1$, $\boldsymbol{\mu}_1, \boldsymbol{\mu}_2 \in \mathbb{R}^p$, $\boldsymbol{\mu}_1 \neq  \boldsymbol{\mu}_2$, and a full rank $\boldsymbol{\Sigma} \in \mathbb{R}^{p \times p} $. The marginal distribution of $\textbf{x}$ is then the multivariate normal mixture,
\begin{align*}
    \textbf{x} \sim \alpha_1 \mathcal{N}_p(\boldsymbol{\mu}_1, \boldsymbol{\Sigma}) + \alpha_2 \mathcal{N}_p(\boldsymbol{\mu}_2, \boldsymbol{\Sigma}),
\end{align*}
where $\alpha_2 := 1 - \alpha_1$. Under model \eqref{eq:xy_model}, the classification of $\textbf{x}$ is usually based on its projection onto the linear discriminant direction  $\boldsymbol{\theta} = \boldsymbol{\Sigma}^{-1} (\boldsymbol{\mu}_2 - \boldsymbol{\mu}_1) $. This projection direction is optimal in the sense that the optimal Bayes classifier (having the minimal misclassification rate out of all classifiers) depends on the data only through the projection $\boldsymbol{\theta}'\textbf{x}$, see, e.g., \cite{mardia1979multivariate}.

Our objective throughout the paper is the estimation of the standardized projection direction $ \boldsymbol{\theta}/\| \boldsymbol{\theta} \|$ (the scale of the projection direction is irrelevant, meaning that the unit length constraint is without loss of generality). As described in Section \ref{sec:introduction}, we will consider two types of estimators, blind ones which use only the random vector $\textbf{x}$ (a sample from its distribution) in the estimation, and an unblind one which bases the estimation on the full pair $(\textbf{x}, y)$. The unblind method is allowed more information in the estimation and is, naturally, expected to provide a more efficient estimator, a fact that is verified by our comparisons later on.

\subsection{Unblind estimation of the linear discriminant}

If we have a sample $(\textbf{x}_1, y_1), \ldots , (\textbf{x}_n, y_n) $ from the distribution of the full pair $(\textbf{x}, y)$ available, the standard estimator of $\boldsymbol{\Sigma}^{-1} (\boldsymbol{\mu}_2 - \boldsymbol{\mu}_1)$ is the plug-in estimator (which is also its MLE, up to the scaling of the pooled covariance matrix) used in standard LDA. That is, using the notation,
$$
\bar{\textbf{x}}_{n1} := \frac{1}{\sum_{i=1}^n y_i} \sum_{i=1}^n y_i \textbf{x}_i, \quad \bar{\textbf{x}}_{n2} := \frac{1}{\sum_{i=1}^n ( 1 - y_i ) } \sum_{i=1}^n (1 - y_i) \textbf{x}_i,
$$
$$
\textbf{S}_n := \frac{1}{n - 2} \left\{ \sum_{i=1}^n y_i (\textbf{x}_i - \bar{\textbf{x}}_1) (\textbf{x}_i - \bar{\textbf{x}}_1)' + \sum_{i=1}^n (1 - y_i) (\textbf{x}_i - \bar{\textbf{x}}_2) (\textbf{x}_i - \bar{\textbf{x}}_2)' \right\},
$$
we consider the estimator,
\begin{align*}
    \textbf{w}_n := \textbf{S}_n^{-1}(\bar{\textbf{x}}_{n2} - \bar{\textbf{x}}_{n1}).
\end{align*}
Asymptotic results for LDA are very standard in the literature, see for example \cite{anderson2003introduction}. However, these results are usually given in the case of fixed group sizes, whereas in our model the group sizes are determined by the indicator variables $y_1, \ldots, y_n$ and are, as such, random. Hence, as far as we know, the following theorem is, if not particularly groundbreaking in its conclusions, a novel one.

\begin{theorem}\label{theo:asnorm_unblind}
    Under model \eqref{eq:xy_model}, we have, as $n \rightarrow \infty$,
    \begin{align*}
        \sqrt{n}(\textbf{w}_n/\| \textbf{w}_n \| - \boldsymbol{\theta}/\| \boldsymbol{\theta} \|) \rightsquigarrow \mathcal{N}_p(\textbf{0}, \boldsymbol{\Psi}_U),
    \end{align*}
    where
    \begin{align*}
        \boldsymbol{\Psi}_U := \left( \frac{1 + \beta \tau}{\| \boldsymbol{\theta} \|^2 \beta} \right)  \left( \textbf{I}_p - \frac{\boldsymbol{\theta} \boldsymbol{\theta}'}{\| \boldsymbol{\theta} \|^2} \right) \boldsymbol{\Sigma}^{-1} \left( \textbf{I}_p - \frac{\boldsymbol{\theta} \boldsymbol{\theta}'}{\| \boldsymbol{\theta} \|^2} \right),
    \end{align*}
    $\beta := \alpha_1 \alpha_2$ and $\tau := (\boldsymbol{\mu}_2 - \boldsymbol{\mu}_1)' \boldsymbol{\Sigma}^{-1} (\boldsymbol{\mu}_2 - \boldsymbol{\mu}_1)$.
\end{theorem}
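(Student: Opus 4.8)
The plan is to derive the joint asymptotic distribution of the three building blocks $\bar{\textbf{x}}_{n1}$, $\bar{\textbf{x}}_{n2}$ and $\textbf{S}_n$ and then propagate it through the smooth map $(\textbf{a}, \textbf{b}, \textbf{M}) \mapsto \textbf{M}^{-1}(\textbf{b} - \textbf{a})$ followed by normalization $\textbf{v} \mapsto \textbf{v}/\|\textbf{v}\|$ via the delta method. First I would condition on the label vector $(y_1, \dots, y_n)$. Writing $n_1 = \sum_i y_i$ and $n_2 = n - n_1$, the law of large numbers gives $n_1/n \to \alpha_1$ and $n_2/n \to \alpha_2$ almost surely, and conditionally on the labels the within-group sample means are independent Gaussians with means $\boldsymbol{\mu}_1, \boldsymbol{\mu}_2$ and covariances $\boldsymbol{\Sigma}/n_1$, $\boldsymbol{\Sigma}/n_2$, while $(n-2)\textbf{S}_n$ is an independent Wishart-type sum. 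Hence $\sqrt{n}(\bar{\textbf{x}}_{n2} - \bar{\textbf{x}}_{n1} - (\boldsymbol{\mu}_2 - \boldsymbol{\mu}_1))$ is asymptotically $\mathcal{N}_p(\textbf{0}, \boldsymbol{\Sigma}/\alpha_1 + \boldsymbol{\Sigma}/\alpha_2) = \mathcal{N}_p(\textbf{0}, \boldsymbol{\Sigma}/\beta)$, and $\textbf{S}_n \to \boldsymbol{\Sigma}$ almost surely with $\sqrt{n}(\textbf{S}_n - \boldsymbol{\Sigma}) = O_p(1)$; crucially, by the conditional independence the fluctuation of $\textbf{S}_n$ is asymptotically independent of the fluctuation of $\bar{\textbf{x}}_{n2} - \bar{\textbf{x}}_{n1}$ (and we must also check the randomness in $n_1/n$ contributes only lower-order terms, which it does since $\bar{\textbf{x}}_{n2} - \bar{\textbf{x}}_{n1}$ concentrates at the fixed vector $\boldsymbol{\mu}_2 - \boldsymbol{\mu}_1$).

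Next I would apply the delta method to $\textbf{w}_n = \textbf{S}_n^{-1}(\bar{\textbf{x}}_{n2} - \bar{\textbf{x}}_{n1})$. Expanding,
\begin{align*}
\sqrt{n}(\textbf{w}_n - \boldsymbol{\theta}) = \textbf{S}_n^{-1}\sqrt{n}(\bar{\textbf{x}}_{n2} - \bar{\textbf{x}}_{n1} - (\boldsymbol{\mu}_2 - \boldsymbol{\mu}_1)) - \textbf{S}_n^{-1}\sqrt{n}(\textbf{S}_n - \boldsymbol{\Sigma})\boldsymbol{\Sigma}^{-1}(\boldsymbol{\mu}_2 - \boldsymbol{\mu}_1) + o_p(1),
\end{align*}
so that $\sqrt{n}(\textbf{w}_n - \boldsymbol{\theta})$ has a Gaussian limit whose covariance is the sum of $\boldsymbol{\Sigma}^{-1}(\boldsymbol{\Sigma}/\beta)\boldsymbol{\Sigma}^{-1} = \boldsymbol{\Sigma}^{-1}/\beta$ and the covariance contributed by the $\textbf{S}_n$ term, the two being independent. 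For the latter I need the asymptotic covariance of $\sqrt{n}(\textbf{S}_n - \boldsymbol{\Sigma})\boldsymbol{\theta}$ where $\boldsymbol{\theta} = \boldsymbol{\Sigma}^{-1}(\boldsymbol{\mu}_2 - \boldsymbol{\mu}_1)$; for a pooled Gaussian covariance estimator this is the standard Wishart fourth-moment formula, giving $\mathrm{Cov} = \tau\boldsymbol{\Sigma} + \boldsymbol{\Sigma}\boldsymbol{\theta}\boldsymbol{\theta}'\boldsymbol{\Sigma}/1$ scaled appropriately — more precisely the limiting covariance of $\sqrt{n}\,\textbf{S}_n^{-1}(\textbf{S}_n - \boldsymbol{\Sigma})\boldsymbol{\theta}$ works out to $\tau\boldsymbol{\Sigma}^{-1} + \boldsymbol{\theta}\boldsymbol{\theta}'$. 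Adding the two pieces gives limiting covariance $(1/\beta)\boldsymbol{\Sigma}^{-1} + \tau\boldsymbol{\Sigma}^{-1} + \boldsymbol{\theta}\boldsymbol{\theta}' = \{(1+\beta\tau)/\beta\}\boldsymbol{\Sigma}^{-1} + \boldsymbol{\theta}\boldsymbol{\theta}'$ for $\sqrt{n}(\textbf{w}_n - \boldsymbol{\theta})$.

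Finally I would handle the normalization. The map $g(\textbf{v}) = \textbf{v}/\|\textbf{v}\|$ has Jacobian at $\textbf{v} = \boldsymbol{\theta}$ equal to $\|\boldsymbol{\theta}\|^{-1}(\textbf{I}_p - \boldsymbol{\theta}\boldsymbol{\theta}'/\|\boldsymbol{\theta}\|^2)$, the scaled projection onto the orthogonal complement of $\boldsymbol{\theta}$. Sandwiching the covariance $\{(1+\beta\tau)/\beta\}\boldsymbol{\Sigma}^{-1} + \boldsymbol{\theta}\boldsymbol{\theta}'$ between two copies of this Jacobian annihilates the rank-one $\boldsymbol{\theta}\boldsymbol{\theta}'$ term (since $(\textbf{I}_p - \boldsymbol{\theta}\boldsymbol{\theta}'/\|\boldsymbol{\theta}\|^2)\boldsymbol{\theta} = \textbf{0}$) and leaves exactly
\begin{align*}
\boldsymbol{\Psi}_U = \left(\frac{1+\beta\tau}{\|\boldsymbol{\theta}\|^2\beta}\right)\left(\textbf{I}_p - \frac{\boldsymbol{\theta}\boldsymbol{\theta}'}{\|\boldsymbol{\theta}\|^2}\right)\boldsymbol{\Sigma}^{-1}\left(\textbf{I}_p - \frac{\boldsymbol{\theta}\boldsymbol{\theta}'}{\|\boldsymbol{\theta}\|^2}\right),
\end{align*}
which is the claimed expression. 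The main obstacle I anticipate is the bookkeeping around the random group sizes: one must argue carefully that conditioning on $(y_1,\dots,y_n)$ is legitimate, that the conditional CLTs hold uniformly enough to pass to the unconditional limit (e.g.\ via a subsequence/almost-sure-representation argument or by verifying a conditional Lindeberg condition), and that the randomness in $n_1/n$ enters only at order $o_p(n^{-1/2})$ in the relevant products. The Wishart fourth-moment computation for the pooled estimator is routine but must be done with the correct centering (within-group means, not the pooled mean) to get the constant $\tau$ rather than something else.
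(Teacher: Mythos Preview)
Your proposal is correct and arrives at exactly the same intermediate result as the paper, namely that $\sqrt{n}(\textbf{w}_n-\boldsymbol{\theta})$ has limiting covariance $\{(1+\beta\tau)/\beta\}\boldsymbol{\Sigma}^{-1}+\boldsymbol{\theta}\boldsymbol{\theta}'$, after which the normalization step via the Jacobian $\|\boldsymbol{\theta}\|^{-1}(\textbf{I}_p-\boldsymbol{\theta}\boldsymbol{\theta}'/\|\boldsymbol{\theta}\|^2)$ is identical in both arguments. The route, however, is genuinely different. The paper does \emph{not} condition on the labels; instead it linearizes $\bar{\textbf{x}}_{n1}$, $\bar{\textbf{x}}_{n2}$ and $\textbf{S}_n$ directly in terms of the centred sums $\textbf{a}_{n1}=\sqrt{n}\{(1/n)\sum_i y_i\textbf{x}_i+\beta\textbf{h}\}$, $\textbf{a}_{n2}$, $a_{n3}=\sqrt{n}(\bar{y}_n-\alpha_1)$ and $\textbf{A}_{n4}=\sqrt{n}\{(1/n)\sum_i\textbf{x}_i\textbf{x}_i'-\mathrm{Cov}(\textbf{x})\}$, applies the unconditional CLT to the full vector $(\textbf{x},y)$, and then computes the limiting covariance of $\beta\boldsymbol{\Sigma}\sqrt{n}(\textbf{w}_n-\boldsymbol{\theta})$ as a sum of $36$ cross-terms that collapse to $\beta(1+\beta\tau)\boldsymbol{\Sigma}+\beta^2\textbf{h}\textbf{h}'$. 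Your conditioning argument replaces that brute-force calculation with two structural Gaussian facts: (i) conditionally on the labels, the within-group means and the pooled sums of squares are independent (Cochran), so the two contributions to $\sqrt{n}(\textbf{w}_n-\boldsymbol{\theta})$ simply add; and (ii) conditionally, $(n-2)\textbf{S}_n\sim W_p(\boldsymbol{\Sigma},n-2)$ with a law that does not depend on $n_1$, which makes the Wishart fourth-moment formula $\mathrm{Cov}\{\sqrt{n}(\textbf{S}_n-\boldsymbol{\Sigma})\boldsymbol{\theta}\}\to\tau\boldsymbol{\Sigma}+\textbf{h}\textbf{h}'$ immediate and also trivializes the passage from conditional to unconditional limits. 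The payoff of your approach is brevity and transparency (no $36$-term bookkeeping, and the independence of the two pieces is seen rather than verified by cancellation); the payoff of the paper's approach is that it uses only moment linearizations and an unconditional CLT, a template that transfers verbatim to the non-Gaussian projection-pursuit theorems later in the paper where no conditional Wishart structure is available. Your anticipated obstacle about random group sizes is real but milder than you suggest: because the conditional laws are \emph{exactly} Gaussian/Wishart and the Wishart law does not involve $n_1$, a Slutsky argument with $n/n_k\to 1/\alpha_k$ a.s.\ suffices, and no Lindeberg-type uniformity is needed.
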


The form of the limiting covariance matrix in Theorem \ref{theo:asnorm_unblind} is rather simple and inspection of the proof of the result reveals that the involved projection matrices onto the orthogonal complement of the direction $\boldsymbol{\theta}/\| \boldsymbol{\theta} \|$ are simply consequences of the standardization of the estimator to unit length. Note also that the scalar factor in front can be written as $ 1/(\| \boldsymbol{\theta} \|^2 \beta) + (\boldsymbol{\theta}/\| \boldsymbol{\theta} \|)' \boldsymbol{\Sigma} (\boldsymbol{\theta}/\| \boldsymbol{\theta} \|) $, the two summands of which have the following rough interpretations: If the groups are imbalanced, $\beta$ is small, making the first summand large and inflating the asymptotic variance. Similarly, if the data exhibit a large amount of variation in the direction of the optimal discriminant direction, i.e., $(\boldsymbol{\theta}/\| \boldsymbol{\theta} \|)' \boldsymbol{\Sigma} (\boldsymbol{\theta}/\| \boldsymbol{\theta} \|)$ is large, the second term increases the magnitude of the asymptotic variance.

\subsection{Blind estimation of the linear discriminant}

\subsubsection*{Kurtosis-based projection pursuit}

Let $\delta_1 := 1/2 - 1/\sqrt{12}$, $\delta_2 := 1/2 + 1/\sqrt{12}$ and $\tilde{\textbf{x}} := \textbf{x} - \mathrm{E}(\textbf{x}) $. The kurtosis $\kappa: \mathbb{S}^{p-1} \to \mathbb{R}$ of the projection of $\textbf{x}$ on a given direction $\textbf{u} \in \mathbb{S}^{p-1}$ is then defined as,
\begin{align*}
    \kappa(\textbf{u}) = \frac{ \mathrm{E} \{ ( \textbf{u}' \tilde{\textbf{x}} )^4 \} }{ [ \mathrm{E} \{ ( \textbf{u}' \tilde{\textbf{x}} )^2 \} ]^2 }.
\end{align*}
The fact that projection pursuit based on kurtosis is Fisher consistent for the linear discriminant under normal mixtures was first shown in \cite[Corollary~2]{pena2001cluster}. However, the successful use of their result in practice requires knowing something about the mixing proportion $\alpha_1$. Namely, if $\alpha_1 \in ( \delta_1, \delta_2 )$ then the linear discriminant $\boldsymbol{\theta}/\|\boldsymbol{\theta}\|$ is found as the minimizer of $\kappa$, whereas if $\alpha_1 \in (0, \delta_1) \cup (\delta_2, 1)$ then  $\boldsymbol{\theta}/\|\boldsymbol{\theta}\|$ is found as the maximizer of $\kappa$. Naturally, as a workaround, one could in practice always search for both the minimizer and the maximizer of $\kappa$ but, even in this case, it might be non-trivial to recognize the linear discriminant amongst the two. Thus, to obtain a truly blind estimator, we propose instead using the squared \textit{excess kurtosis} $ \{ \kappa(\textbf{u}) - 3 \}^2 $ as an objective function. Indeed, the next lemma reveals that the squared excess kurtosis yields a Fisher consistent estimate of the linear discriminant, apart from the degenerate cases $\alpha_1 \in \{ \delta_1, \delta_2 \}$ where excess kurtosis vanishes, without the need to choose between minimization and maximization.

\begin{lemma}\label{lem:kurtosis_fisher}
    Given model \eqref{eq:xy_model},
    \begin{itemize}
        \item[1)] if $ \alpha_1 \notin \{ \delta_1, \delta_2 \} $, then the function $\textbf{u} \mapsto \{ \kappa(\textbf{u}) - 3 \}^2$ is uniquely maximized by $\pm \boldsymbol{\theta}/\| \boldsymbol{\theta} \|$,
        \item[2)] if $ \alpha_1 \in \{ \delta_1, \delta_2 \} $, then $\{ \kappa(\textbf{u}) - 3 \}^2 = 0$ for all $\textbf{u} \in \mathbb{S}^{p - 1}$.
    \end{itemize}
\end{lemma}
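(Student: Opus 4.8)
The plan is to reduce the $p$-dimensional optimization to a one-dimensional moment computation followed by a generalized Rayleigh-quotient argument. Fix $\textbf{u} \in \mathbb{S}^{p-1}$. Under model \eqref{eq:xy_model} the univariate projection $\textbf{u}'\textbf{x}$ is a two-component Gaussian mixture whose components share the common variance $\sigma^2(\textbf{u}) := \textbf{u}'\boldsymbol{\Sigma}\textbf{u} > 0$ (positivity from the full rank of $\boldsymbol{\Sigma}$) and whose means differ by $\Delta(\textbf{u}) := \textbf{u}'(\boldsymbol{\mu}_2 - \boldsymbol{\mu}_1)$, with weights $\alpha_1, \alpha_2$. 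First I would center at $\mathrm{E}(\textbf{x}) = \alpha_1 \boldsymbol{\mu}_1 + \alpha_2 \boldsymbol{\mu}_2$ and, using the elementary Gaussian moment identity $\mathrm{E}\{(a + \sigma Z)^4\} = a^4 + 6 a^2 \sigma^2 + 3\sigma^4$ for $Z \sim \mathcal{N}(0,1)$, together with $\alpha_1^3 + \alpha_2^3 = 1 - 3\beta$ (with $\beta = \alpha_1 \alpha_2$), evaluate the second and fourth central moments of $\textbf{u}'\tilde{\textbf{x}}$. This yields the closed form
\begin{align*}
    \kappa(\textbf{u}) - 3 = \beta(1 - 6\beta)\, g(\textbf{u})^2, \qquad g(\textbf{u}) := \frac{\Delta(\textbf{u})^2}{\sigma^2(\textbf{u}) + \beta\,\Delta(\textbf{u})^2} \ge 0.
\end{align*}

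Part 2) then follows at once: solving $\alpha_1(1 - \alpha_1) = 1/6$ shows that $\beta = 1/6$ exactly when $\alpha_1 \in \{\delta_1, \delta_2\}$, in which case the scalar factor $\beta(1 - 6\beta)$ vanishes and $\kappa(\textbf{u}) - 3 \equiv 0$ on $\mathbb{S}^{p-1}$, hence $\{\kappa(\textbf{u}) - 3\}^2 = 0$ for all $\textbf{u} \in \mathbb{S}^{p-1}$.

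For part 1), when $\alpha_1 \notin \{\delta_1, \delta_2\}$ the constant $\beta^2(1 - 6\beta)^2$ is strictly positive, so $\{\kappa(\textbf{u}) - 3\}^2 = \beta^2(1 - 6\beta)^2\, g(\textbf{u})^4$ is a strictly increasing function of $g(\textbf{u}) \ge 0$; maximizing the squared excess kurtosis over $\mathbb{S}^{p-1}$ is therefore equivalent to maximizing $g$. Writing $R(\textbf{u}) := \Delta(\textbf{u})^2 / \sigma^2(\textbf{u}) \in [0, \infty)$ (well-defined since $\sigma^2(\textbf{u}) > 0$) we have $g(\textbf{u}) = R(\textbf{u})/\{1 + \beta R(\textbf{u})\}$, and since $r \mapsto r/(1 + \beta r)$ has derivative $(1 + \beta r)^{-2} > 0$ on $[0, \infty)$, this is in turn equivalent to maximizing the generalized Rayleigh quotient $R(\textbf{u}) = (\textbf{u}'(\boldsymbol{\mu}_2 - \boldsymbol{\mu}_1))^2 / (\textbf{u}'\boldsymbol{\Sigma}\textbf{u})$. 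Substituting $\textbf{z} = \boldsymbol{\Sigma}^{1/2}\textbf{u}$ and applying the Cauchy--Schwarz inequality gives $R(\textbf{u}) \le (\boldsymbol{\mu}_2 - \boldsymbol{\mu}_1)'\boldsymbol{\Sigma}^{-1}(\boldsymbol{\mu}_2 - \boldsymbol{\mu}_1) = \tau$, with equality if and only if $\textbf{u}$ is proportional to $\boldsymbol{\Sigma}^{-1}(\boldsymbol{\mu}_2 - \boldsymbol{\mu}_1) = \boldsymbol{\theta}$; since $\boldsymbol{\mu}_1 \ne \boldsymbol{\mu}_2$, the vector $\boldsymbol{\theta}$ is nonzero and the only unit-norm maximizers are $\pm\boldsymbol{\theta}/\|\boldsymbol{\theta}\|$, which is the claim (continuity of $g$ on the compact set $\mathbb{S}^{p-1}$ ensures the maximum is attained).

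None of the individual steps is deep: the argument is an exact moment evaluation glued to the classical solution of a generalized eigenvalue problem. I expect the main (if mild) obstacle to be the algebraic bookkeeping behind the closed-form excess kurtosis --- correctly accounting for the centering $\mathrm{E}(\textbf{x}) = \alpha_1\boldsymbol{\mu}_1 + \alpha_2\boldsymbol{\mu}_2$ and the weights $\alpha_1, \alpha_2$, since any sign or coefficient slip there would corrupt both parts of the lemma --- and, secondarily, checking the strict monotonicity that lets the maximization be transferred without loss from $\{\kappa - 3\}^2$ all the way down to the Rayleigh quotient $R$.
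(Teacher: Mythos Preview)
Your proposal is correct and follows essentially the same route as the paper: compute the closed-form excess kurtosis $\kappa(\textbf{u})-3=\beta(1-6\beta)\,t^4/(g+\beta t^2)^2$ (in the paper's notation $t=\textbf{u}'\textbf{h}$, $g=\textbf{u}'\boldsymbol{\Sigma}\textbf{u}$), read off part 2) from the factor $1-6\beta$, and for part 1) use monotonicity to reduce to maximizing the generalized Rayleigh quotient $t^2/g$, which is then handled by the substitution $\boldsymbol{\Sigma}^{1/2}\textbf{u}$ and Cauchy--Schwarz. The only cosmetic difference is that the paper parametrizes via $f=t^2/g$ and differentiates $x\mapsto x^4/(1+\beta x)^4$ in one step, whereas you split the monotonicity into two stages via your intermediate quantity $g(\textbf{u})=f/(1+\beta f)$.
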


Moving next to study the asymptotic properties of $\kappa$, let $\textbf{x}_1, \ldots, \textbf{x}_n$ be a random sample from the marginal distribution of $\textbf{x}$ in the model \eqref{eq:xy_model}. The sample counterpart of $\kappa$ is
\begin{align*}
    \kappa_{n}: \mathbb{S}^{p - 1} \to \mathbb{R}, \quad \kappa_{n} (\textbf{u}) = \frac{ (1/n) \sum_{i=1}^n ( \textbf{u}' \tilde{\textbf{x}}_i )^4 }{ \{ (1/n) \sum_{i=1}^n ( \textbf{u}' \tilde{\textbf{x}}_i )^2  \}^2 },
\end{align*}
where $\tilde{\textbf{x}}_i := \textbf{x}_i - \bar{\textbf{x}}$. If $n \geq p $ the denominator of the random function $\kappa_n$ is a.s. positive, making $\kappa_n$ well-defined and an estimator for $\boldsymbol{\theta}/\| \boldsymbol{\theta} \|$ is then obtained as any maximizer of $ \textbf{u} \mapsto \{ \kappa_{n}(\textbf{u}) - 3 \}^2$ (note that if $n \geq p$, a maximizer exists almost surely due to the compacity of $\mathbb{S}^{p-1}$). The following theorem shows that any sequence of such maximizers has a limiting normal distribution. Note that the need to include the ``corrective'' signs $s_n$ in Theorem \ref{theo:asnorm_blind} stems from the sign-invariance of the objective function (which also causes the existence of two maximizers in Lemma \ref{lem:kurtosis_fisher}).

\begin{theorem}\label{theo:asnorm_blind}
    Given model \eqref{eq:xy_model}, assume that $ \alpha_1 \notin \{ \delta_1, \delta_2 \} $ and let $\textbf{u}_n$ be any sequence of maximizers of $\textbf{u} \mapsto \{ \kappa_n(\textbf{u}) - 3 \}^2$. Then, there exists a sequence of signs $s_n \in \{ -1 , 1 \}$ such that, as $n \rightarrow \infty$,
    \begin{itemize}
        \item[1)] $s_n \textbf{u}_n \rightarrow \boldsymbol{\theta}/\| \boldsymbol{\theta} \|$, almost surely.
        \item[2)] $ \sqrt{n}(s_n \textbf{u}_n - \boldsymbol{\theta}/\| \boldsymbol{\theta} \|) \rightsquigarrow \mathcal{N}_p(\textbf{0}, \boldsymbol{\Psi}_{\kappa}) $, where
        \begin{align*}
            \boldsymbol{\Psi}_{\kappa} := C_\kappa \left( \frac{1 + \beta \tau}{\| \boldsymbol{\theta} \|^2 \beta} \right) \left( \textbf{I}_p - \frac{\boldsymbol{\theta} \boldsymbol{\theta}'}{\| \boldsymbol{\theta} \|^2} \right) \boldsymbol{\Sigma}^{-1} \left( \textbf{I}_p - \frac{\boldsymbol{\theta} \boldsymbol{\theta}'}{\| \boldsymbol{\theta} \|^2} \right),
        \end{align*}
        and
        \begin{align*}
            C_\kappa := \frac{6 + 24 \beta \tau + 9 \beta (1 - 2 \beta) \tau^2 + \beta (1 - 3 \beta) \tau^3}{\beta \tau^3 (1 - 6 \beta)^2},
        \end{align*}
        with $ \beta $ and $ \tau $ as in Theorem \ref{theo:asnorm_unblind}.
    \end{itemize}
\end{theorem}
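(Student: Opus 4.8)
The proof has two parts — almost sure convergence and asymptotic normality — and for both it is convenient to first reduce to a canonical form. Since $\textbf{u}\mapsto\{\kappa(\textbf{u})-3\}^2$ and its empirical version are invariant under $\textbf{x}\mapsto\textbf{A}\textbf{x}$ in the sense that their maximizers transform to the normalization of $(\textbf{A}')^{-1}\boldsymbol{\theta}$, exactly as does the linear discriminant, and the claimed $\boldsymbol{\Psi}_\kappa$ transforms accordingly, it suffices to prove the theorem in the canonical model $\boldsymbol{\mu}_1=\textbf{0}$, $\boldsymbol{\mu}_2=\sqrt{\tau}\,\textbf{e}_1$, $\boldsymbol{\Sigma}=\textbf{I}_p$. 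In these coordinates $\boldsymbol{\theta}/\|\boldsymbol{\theta}\|=\textbf{e}_1=:\boldsymbol{\theta}_0$, the joint law of $\tilde{\textbf{x}}$ is invariant under permutations and individual sign flips of the coordinates $\tilde x_2,\dots,\tilde x_p$, which are moreover i.i.d.\ $\mathcal{N}(0,1)$ and independent of $\tilde x_1$, and $\tilde x_1$ is a centred two-component univariate normal mixture whose moments up to order eight are explicit polynomials in $\beta$ and $\tau$.

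For part 1), I would run a standard argmax argument. Since $\textbf{x}$ has finite moments of all orders, $\bar{\textbf{x}}\to\mathrm{E}(\textbf{x})$ and the empirical moment tensors of $\textbf{x}$ up to order four converge almost surely; as $\kappa_n(\textbf{u})$, together with its first two $\textbf{u}$-derivatives, is a fixed continuous function of these quantities and of $\textbf{u}$ on the compact sphere — with denominator bounded away from zero near $\boldsymbol{\theta}_0$ — a uniform strong law yields $\sup_{\textbf{u}\in\mathbb{S}^{p-1}}|\{\kappa_n(\textbf{u})-3\}^2-\{\kappa(\textbf{u})-3\}^2|\to0$ almost surely. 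Combined with Lemma \ref{lem:kurtosis_fisher}, which identifies $\pm\boldsymbol{\theta}_0$ as the unique maximizers of the limit, this forces the distance from $\textbf{u}_n$ to $\{-\boldsymbol{\theta}_0,\boldsymbol{\theta}_0\}$ to converge to $0$ almost surely; taking $s_n$ to select the nearer sign gives 1).

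For part 2), write $\textbf{P}_0:=\textbf{I}_p-\boldsymbol{\theta}_0\boldsymbol{\theta}_0'$ and $\textbf{v}_n:=s_n\textbf{u}_n$. Since $\boldsymbol{\theta}_0$ is a critical point of $\kappa$ on the sphere we have $\textbf{P}_0\nabla\kappa(\boldsymbol{\theta}_0)=\textbf{0}$, and $\kappa(\boldsymbol{\theta}_0)-3=\tau^2\beta(1-6\beta)/(1+\beta\tau)^2\ne0$ under $\alpha_1\notin\{\delta_1,\delta_2\}$, so on a neighbourhood of $\boldsymbol{\theta}_0$ the sign of $\kappa_n-3$ is eventually constant and $\textbf{v}_n$, which by 1) eventually lies in such a neighbourhood, eventually solves the first-order condition $(\textbf{I}_p-\textbf{v}_n\textbf{v}_n')\nabla\kappa_n(\textbf{v}_n)=\textbf{0}$. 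Taylor-expanding this estimating equation about $\boldsymbol{\theta}_0$ — controlling the remainder through the almost sure uniform convergence of the derivatives of $\kappa_n$ (again smooth functions of converging empirical moments) and using that $\textbf{v}_n-\boldsymbol{\theta}_0$ lies in the range of $\textbf{P}_0$ up to $O(\|\textbf{v}_n-\boldsymbol{\theta}_0\|^2)$ — one first establishes the $\sqrt{n}$-rate and then obtains
\begin{align*}
\sqrt{n}(\textbf{v}_n-\boldsymbol{\theta}_0)=-\textbf{H}^{+}\sqrt{n}\,\textbf{P}_0\{\nabla\kappa_n(\boldsymbol{\theta}_0)-\nabla\kappa(\boldsymbol{\theta}_0)\}+o_p(1),
\end{align*}
where $\textbf{H}:=\textbf{P}_0\{\nabla^2\kappa(\boldsymbol{\theta}_0)-(\boldsymbol{\theta}_0'\nabla\kappa(\boldsymbol{\theta}_0))\textbf{I}_p\}\textbf{P}_0$ is the Hessian of $\kappa$ restricted to the sphere and $\textbf{H}^{+}$ its Moore--Penrose inverse (the computation below shows $\textbf{H}$ is nonsingular on the range of $\textbf{P}_0$ precisely when $\alpha_1\notin\{\delta_1,\delta_2\}$). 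Since $\nabla\kappa_n(\boldsymbol{\theta}_0)$ is a smooth function of $\bar{\textbf{x}}$ and of empirical moments of $\textbf{x}$ of order at most four, the multivariate central limit theorem and the delta method give $\sqrt{n}\,\textbf{P}_0\{\nabla\kappa_n(\boldsymbol{\theta}_0)-\nabla\kappa(\boldsymbol{\theta}_0)\}\rightsquigarrow\mathcal{N}_p(\textbf{0},\textbf{P}_0\textbf{V}_\kappa\textbf{P}_0)$ for an explicit $\textbf{V}_\kappa$, hence $\sqrt{n}(\textbf{v}_n-\boldsymbol{\theta}_0)\rightsquigarrow\mathcal{N}_p(\textbf{0},\textbf{H}^{+}\textbf{P}_0\textbf{V}_\kappa\textbf{P}_0\textbf{H}^{+})$.

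It remains to identify this covariance with $\boldsymbol{\Psi}_\kappa$. In the canonical coordinates the range of $\textbf{P}_0$ is $\mathrm{span}(\textbf{e}_2,\dots,\textbf{e}_p)$, and the permutation- and sign-symmetries of $\tilde x_2,\dots,\tilde x_p$ force both $\textbf{H}$ and $\textbf{P}_0\textbf{V}_\kappa\textbf{P}_0$ to be scalar multiples of $\textbf{P}_0$, so the limiting covariance equals $(v/h^2)\textbf{P}_0$ with $h$ and $v$ obtained from two one-dimensional computations: $h$ from the second derivative of $\kappa$ along a great circle through $\boldsymbol{\theta}_0$, which turns out to equal $-4\beta\tau^2(1-6\beta)/(1+\beta\tau)^3$ (nonzero exactly when $\alpha_1\notin\{\delta_1,\delta_2\}$), and $v$ from the asymptotic variance of the corresponding directional derivative of $\kappa_n$, which brings in the mixture moments up to order eight. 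Simplifying $v/h^2$ reproduces $C_\kappa(1+\beta\tau)/(\|\boldsymbol{\theta}\|^2\beta)$ — with $\|\boldsymbol{\theta}\|^2=\tau$ in the canonical model — after which transforming back out of the canonical form yields the stated $\boldsymbol{\Psi}_\kappa$. The main obstacle is twofold. First, justifying the Taylor expansion rigorously needs the almost sure (not merely in-probability) consistency of 1) together with uniform control of the derivatives of $\kappa_n$ on a shrinking neighbourhood, a point the earlier PP/ICA literature leaves implicit. Second, the bookkeeping required to compute $\textbf{V}_\kappa$ and reduce $\textbf{H}^{+}\textbf{P}_0\textbf{V}_\kappa\textbf{P}_0\textbf{H}^{+}$ to the compact expression $C_\kappa$ — where the square of the (nonzero) excess kurtosis at $\boldsymbol{\theta}_0$ contributes the factor $(1-6\beta)^2$ in the denominator — is long and error-prone.
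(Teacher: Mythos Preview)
Your proposal is correct and reaches the same conclusion, but the route differs from the paper's in several instructive ways.

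You reduce to the canonical model $\boldsymbol{\Sigma}=\textbf{I}_p$, $\boldsymbol{\theta}/\|\boldsymbol{\theta}\|=\textbf{e}_1$ via affine equivariance and then exploit the permutation/sign symmetry of $\tilde x_2,\dots,\tilde x_p$ to argue that both the restricted Hessian $\textbf{H}$ and the gradient covariance $\textbf{P}_0\textbf{V}_\kappa\textbf{P}_0$ are scalar multiples of $\textbf{P}_0$, collapsing the sandwich $\textbf{H}^+\textbf{P}_0\textbf{V}_\kappa\textbf{P}_0\textbf{H}^+$ to a single ratio $v/h^2$. The paper instead stays in the original (centered) coordinates, works with the estimating function $g_{n\kappa}(\textbf{u})=\tilde s_{n2}(\textbf{u})\tilde{\textbf{m}}_{n3}(\textbf{u})-\tilde s_{n4}(\textbf{u})\tilde{\textbf{m}}_{n1}(\textbf{u})$, and handles the rank deficiency of its Jacobian not via a pseudoinverse but by adding to the Taylor expansion the identically-zero quantity $c_0\textbf{h}(s_n\textbf{u}_n+\textbf{u}_0)'\sqrt{n}(s_n\textbf{u}_n-\textbf{u}_0)$, with $c_0$ chosen so that the completed Jacobian becomes invertible and its inverse is given explicitly by their Lemma~\ref{lem:full_rank_1_inversion}. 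The moments $\textbf{m}_k(\textbf{u}_0)$ and $\textbf{G}_k(\textbf{u}_0)$ for general $\boldsymbol{\Sigma}$ are obtained through a conditioning lemma (Lemma~\ref{lem:normal_moments}) rather than symmetry, and the limiting covariance $\boldsymbol{\Pi}$ of $\sqrt{n}g_{n\kappa}(\textbf{u}_0)$ is computed by expanding and summing $36$ cross-terms to the closed form $s_2(s_2s_6-s_2s_3^2-s_4^2)(\boldsymbol{\Sigma}-\tau^{-1}\textbf{h}\textbf{h}')$. Your symmetry reduction bypasses this bookkeeping almost entirely and is the more elegant standalone argument; the paper's approach, however, produces the linearizations of $\sqrt{n}g_{n\kappa}$ and $\sqrt{n}g_{n\gamma}$ as reusable intermediate objects, which are recycled verbatim in the proof of the hybrid-index Theorem~\ref{theo:asnorm_blind_3}, where one must compute the cross-covariance between the skewness and kurtosis estimating functions and no single symmetry shortcut suffices.

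Two minor remarks. Since $\kappa$ is homogeneous of degree zero, Euler's relation gives $\boldsymbol{\theta}_0'\nabla\kappa(\boldsymbol{\theta}_0)=0$, so the Lagrange-multiplier correction in your $\textbf{H}$ is in fact zero. And the one step you leave entirely implicit---that the asymptotic covariance transforms back out of canonical coordinates to the stated $\boldsymbol{\Psi}_\kappa$---requires applying the delta method through the Jacobian of $\textbf{u}\mapsto(\textbf{A}')^{-1}\textbf{u}/\|(\textbf{A}')^{-1}\textbf{u}\|$; this is routine but should be written out, since it is what actually generates the projectors $\textbf{I}_p-\boldsymbol{\theta}\boldsymbol{\theta}'/\|\boldsymbol{\theta}\|^2$ and the factor $\boldsymbol{\Sigma}^{-1}$ in the final expression.
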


The limiting covariance matrices in Theorems \ref{theo:asnorm_unblind} and \ref{theo:asnorm_blind} are proportional, the only difference being the factor $C_\kappa$. This makes their comparisons in Subsection \ref{subsec:comparison} particularly straightforward. However, even without the formal comparisons, it is evident that the kurtosis-based estimator has a clear flaw in that it fails to be consistent for the mixing proportions $ \alpha_1 \in \{ \delta_1, \delta_2 \}$ (for these values of $\alpha_1$, we have $1-6 \beta = 0$ in the denominator of $C_\kappa$ in Theorem \ref{theo:asnorm_blind}). And even though these are only two points in the continuum $(0, 1)$, the continuity of $C_\kappa$ in $\alpha_1$ outside of these points implies that the estimator is highly inefficient for values of $\alpha_1$ near $ \delta_1 $ or $ \delta_2 $. Hence, we will next discuss an alternative estimator that is consistent when $ \alpha_1 \in \{ \delta_1, \delta_2 \} $ (at the price of lacking consistency in another point).

\subsubsection*{Skewness-based projection pursuit}

To complement the kurtosis-based projection pursuit, we next consider skewness-based projection pursuit. Note that, despite its dependency on lower moments, this form of PP is less studied in the literature (see the references in Section \ref{sec:introduction}).

The skewness of the projection of $\textbf{x}$ on a given direction $\textbf{u} \in \mathbb{S}^{p-1}$ is measured by the objective function $\gamma: \mathbb{S}^{p-1} \to \mathbb{R}$ defined as,
\begin{align*}
    \gamma(\textbf{u}) = \frac{ \mathrm{E} \{ ( \textbf{u}' \tilde{\textbf{x}} )^3 \} }{ [ \mathrm{E} \{ ( \textbf{u}' \tilde{\textbf{x}} )^2 \} ]^{3/2} },
\end{align*}
where again $\tilde{\textbf{x}} = \textbf{x} - \mathrm{E}(\textbf{x}) $. Similarly to kurtosis, also with skewness it is more convenient to work with its squared value. The next lemma presents the Fisher consistency of the corresponding estimator and reveals that the mixing proportion $1/2$ plays the role of the proportions $\delta_1, \delta_2$ for skewness. The reason for this is intuitively clear as, under the choice $\alpha_1 = 1/2$, the normal mixture is perfectly symmetrical, explaining the vanishing of the skewness. The result appeared originally as Proposition 1 in \cite{loperfido2013skewness} but we give, for completeness, a proof in Appendix \ref{sec:proofs}.

\begin{lemma}\label{lem:skewness_fisher}
    Given model \eqref{eq:xy_model},
    \begin{itemize}
        \item[1)] if $ \alpha_1 \neq 1/2 $, then the function $\textbf{u} \mapsto \gamma(\textbf{u})^2$ is uniquely maximized by $\pm \boldsymbol{\theta}/\| \boldsymbol{\theta} \|$,
        \item[2)] if $ \alpha_1 = 1/2 $, then $\gamma(\textbf{u})^2 = 0$ for all $\textbf{u} \in \mathbb{S}^{p - 1}$.
    \end{itemize}
\end{lemma}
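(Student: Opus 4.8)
The plan is to reduce the whole statement to a one-dimensional moment computation followed by an elementary optimization. First I would fix $\textbf{u} \in \mathbb{S}^{p-1}$ and observe that, under model \eqref{eq:xy_model}, the scalar $t := \textbf{u}' \tilde{\textbf{x}}$ is itself a two-component Gaussian mixture. Writing $\mathrm{E}(\textbf{x}) = \alpha_1 \boldsymbol{\mu}_1 + \alpha_2 \boldsymbol{\mu}_2$, $d := \textbf{u}'(\boldsymbol{\mu}_2 - \boldsymbol{\mu}_1)$ and $\sigma^2 := \textbf{u}' \boldsymbol{\Sigma} \textbf{u}$, the variable $t$ equals $\mathcal{N}(-\alpha_2 d, \sigma^2)$ with probability $\alpha_1$ and $\mathcal{N}(\alpha_1 d, \sigma^2)$ with probability $\alpha_2$, the component means being exactly those that make $\mathrm{E}(t) = 0$. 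Since $\boldsymbol{\Sigma}$ has full rank, $\sigma^2 > 0$ for every $\textbf{u} \in \mathbb{S}^{p-1}$, so $\gamma$ is well-defined on the whole sphere.

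Next I would compute the raw moments of $t$ from the mixture representation, using that a $\mathcal{N}(m, \sigma^2)$ variable has second and third raw moments $m^2 + \sigma^2$ and $m^3 + 3 m \sigma^2$. A short calculation (the cross terms in $\sigma^2$ cancel because $\mathrm{E}(t)=0$) gives
\begin{align*}
    \mathrm{E}(t^2) = \sigma^2 + \beta d^2, \qquad \mathrm{E}(t^3) = \beta (\alpha_1 - \alpha_2) d^3,
\end{align*}
with $\beta = \alpha_1 \alpha_2$, and hence
\begin{align*}
    \gamma(\textbf{u})^2 = \beta^2 (\alpha_1 - \alpha_2)^2 \left( \frac{d^2}{\sigma^2 + \beta d^2} \right)^{3}.
\end{align*}
Claim 2) is then immediate: when $\alpha_1 = 1/2$ one has $\alpha_1 - \alpha_2 = 0$, so the displayed expression vanishes identically in $\textbf{u}$.

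For claim 1), I would assume $\alpha_1 \neq 1/2$, so that $\beta^2 (\alpha_1 - \alpha_2)^2$ is a fixed positive constant, and note that since $x \mapsto x^{3}$ is strictly increasing on $[0,\infty)$, maximizing $\gamma(\textbf{u})^2$ over $\mathbb{S}^{p-1}$ is equivalent to maximizing $g(\textbf{u}) := d^2/(\sigma^2 + \beta d^2)$. Any maximizer must satisfy $d \neq 0$, since $g = 0$ whenever $d = 0$ while $\textbf{u} = \boldsymbol{\theta}/\|\boldsymbol{\theta}\|$ already yields $g > 0$; on the set $\{d \neq 0\}$ we may write $g(\textbf{u}) = (\sigma^2/d^2 + \beta)^{-1}$, so it remains to minimize the scale-invariant ratio $\sigma^2/d^2 = \textbf{u}' \boldsymbol{\Sigma} \textbf{u} / \{\textbf{u}'(\boldsymbol{\mu}_2 - \boldsymbol{\mu}_1)\}^2$. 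Applying the Cauchy--Schwarz inequality in the inner product induced by $\boldsymbol{\Sigma}$, i.e. writing $\textbf{u}'(\boldsymbol{\mu}_2 - \boldsymbol{\mu}_1) = (\boldsymbol{\Sigma}^{1/2}\textbf{u})'\{\boldsymbol{\Sigma}^{-1/2}(\boldsymbol{\mu}_2 - \boldsymbol{\mu}_1)\}$, gives $\{\textbf{u}'(\boldsymbol{\mu}_2 - \boldsymbol{\mu}_1)\}^2 \leq (\textbf{u}' \boldsymbol{\Sigma} \textbf{u})\,\tau$ with equality if and only if $\textbf{u} \propto \boldsymbol{\Sigma}^{-1}(\boldsymbol{\mu}_2 - \boldsymbol{\mu}_1) = \boldsymbol{\theta}$. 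Thus $\sigma^2/d^2 \geq 1/\tau$ with equality exactly at $\textbf{u} = \pm \boldsymbol{\theta}/\|\boldsymbol{\theta}\|$, and $\gamma(\textbf{u})^2$ is uniquely maximized up to sign by $\boldsymbol{\theta}/\|\boldsymbol{\theta}\|$.

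I do not anticipate a genuine obstacle here; the main care is in the bookkeeping of the moment computation — tracking the signs and the weights $\alpha_1,\alpha_2$ so that the clean factorization $\mathrm{E}(t^3) = \beta(\alpha_1 - \alpha_2) d^3$ emerges — and in isolating the degenerate direction $d = 0$ before invoking Cauchy--Schwarz, which itself is entirely routine.
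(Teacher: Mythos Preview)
Your proposal is correct and follows essentially the same route as the paper: compute the second and third moments of the projected mixture to obtain the closed form $\gamma(\textbf{u})^2 = \beta^2(\alpha_1-\alpha_2)^2\{d^2/(\sigma^2+\beta d^2)\}^3$, read off claim 2), and for claim 1) reduce to maximizing $d^2/\sigma^2$ (equivalently the paper's $f = t^2/g$) via Cauchy--Schwarz in the $\boldsymbol{\Sigma}$-inner product. The only cosmetic difference is that you invoke monotonicity of $x\mapsto x^3$ after rewriting $g(\textbf{u}) = (\sigma^2/d^2+\beta)^{-1}$, whereas the paper checks monotonicity of $x\mapsto x^3/(1+\beta x)^3$ directly; both lead to the identical Cauchy--Schwarz conclusion.
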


Finally, we derive in Theorem \ref{theo:asnorm_blind_2} below the strong consistency and limiting distribution of the corresponding sample estimator, obtained through the maximization of the square of the sample skewness, defined as,
\begin{align*}
    \gamma_{n}: \mathbb{S}^{p - 1} \to \mathbb{R}, \quad \gamma_{n} (\textbf{u}) = \frac{ (1/n) \sum_{i=1}^n ( \textbf{u}' \tilde{\textbf{x}}_i )^3 }{ \{ (1/n) \sum_{i=1}^n ( \textbf{u}' \tilde{\textbf{x}}_i )^2  \}^{3/2} }.
\end{align*}

\begin{theorem}\label{theo:asnorm_blind_2}
    Given model \eqref{eq:xy_model}, assume that $\alpha_1 \neq 1/2$ and let $\textbf{u}_n$ be any sequence of maximizers of $\textbf{u} \mapsto \gamma_n(\textbf{u})^2$. Then, there exists a sequence of signs $s_n \in \{ -1 , 1 \}$ such that, as $n \rightarrow \infty$,
    \begin{itemize}
        \item[1)] $s_n \textbf{u}_n \rightarrow \boldsymbol{\theta}/\| \boldsymbol{\theta} \|$, almost surely.
        \item[2)] $ \sqrt{n}(s_n \textbf{u}_n - \boldsymbol{\theta}/\| \boldsymbol{\theta} \|) \rightsquigarrow \mathcal{N}_p(\textbf{0}, \boldsymbol{\Psi}_{\gamma}) $, where
        \begin{align*}
            \boldsymbol{\Psi}_{\gamma} := C_\gamma \left( \frac{1 + \beta \tau}{\| \boldsymbol{\theta} \|^2 \beta} \right) \left( \textbf{I}_p - \frac{\boldsymbol{\theta} \boldsymbol{\theta}'}{\| \boldsymbol{\theta} \|^2} \right) \boldsymbol{\Sigma}^{-1} \left( \textbf{I}_p - \frac{\boldsymbol{\theta} \boldsymbol{\theta}'}{\| \boldsymbol{\theta} \|^2} \right),
        \end{align*}
        and
        \begin{align*}
            C_\gamma := \frac{ 2 + 6 \beta \tau + \beta \tau^2}{\beta \tau^2 (1 - 4 \beta)}
        \end{align*}
        with $\beta, \tau$ as in Theorem \ref{theo:asnorm_blind}.
    \end{itemize}
\end{theorem}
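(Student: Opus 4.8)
The plan is to follow the same M‑estimation‑on‑a‑sphere strategy as in the proof of Theorem~\ref{theo:asnorm_blind}, with the kurtosis functional replaced by the squared skewness, and to flag the points where the skewness computation genuinely differs. \textbf{Reduction to a canonical model.} Since $\gamma$ and $\gamma_n$ depend only on the centred quantities and are positively homogeneous of degree zero in $\textbf{u}$, the objectives $\textbf{u}\mapsto\gamma(\textbf{u})^2$ and $\textbf{u}\mapsto\gamma_n(\textbf{u})^2$ single out a direction (up to sign), and this selection is equivariant under any invertible linear map $\textbf{x}\mapsto\textbf{B}\textbf{x}$: the transformed sample objective at $\textbf{w}$ equals $\gamma_n$ evaluated at the direction $\textbf{B}'\textbf{w}$, so maximizers transform by $\textbf{w}\propto(\textbf{B}')^{-1}\textbf{u}$, and likewise for the population. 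Taking $\textbf{B}=\textbf{R}\boldsymbol{\Sigma}^{-1/2}$ with a suitable rotation $\textbf{R}$, it suffices to prove both claims under the normalization $\mathrm{E}(\textbf{x})=\textbf{0}$, $\boldsymbol{\Sigma}=\textbf{I}_p$, $\boldsymbol{\theta}/\|\boldsymbol{\theta}\|=\textbf{e}_1$ (so $\|\boldsymbol{\theta}\|^2=\tau$), and then transfer the limit law back through the smooth map $\textbf{v}\mapsto\textbf{B}'\textbf{v}/\|\textbf{B}'\textbf{v}\|$ via the delta method; this last step is exactly what manufactures the projections $\textbf{I}_p-\boldsymbol{\theta}\boldsymbol{\theta}'/\|\boldsymbol{\theta}\|^2$, the factor $\boldsymbol{\Sigma}^{-1}$, and the scalar $(1+\beta\tau)/(\|\boldsymbol{\theta}\|^2\beta)$ in $\boldsymbol{\Psi}_\gamma$, precisely as for Theorem~\ref{theo:asnorm_blind}. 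In the canonical model the first coordinate $z:=x_1$ is a centred two‑component univariate normal mixture with unit within‑variance, while $x_2,\dots,x_p$ are standard normal and independent of $z$ and of one another; this independence is what keeps all the covariance computations below diagonal.

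\textbf{Strong consistency (part 1).} Writing $\gamma_n(\textbf{u})^2=M_{3,n}(\textbf{u})^2/M_{2,n}(\textbf{u})^3$ with $M_{k,n}(\textbf{u})=(1/n)\sum_i(\textbf{u}'\tilde{\textbf{x}}_i)^k$, the (finitely many) coefficients of these polynomials are sample central moments and converge a.s.\ to their population counterparts, because the normal mixture has all moments finite; since $M_{2,n}(\textbf{u})=\textbf{u}'\textbf{S}_n\textbf{u}\geq\lambda_{\min}(\textbf{S}_n)\to\lambda_{\min}(\mathrm{Cov}(\textbf{x}))>0$, we obtain $\sup_{\textbf{u}\in\mathbb{S}^{p-1}}|\gamma_n(\textbf{u})^2-\gamma(\textbf{u})^2|\to0$ a.s. Combining this uniform convergence on the compact sphere with Lemma~\ref{lem:skewness_fisher}(1), which identifies $\pm\boldsymbol{\theta}/\|\boldsymbol{\theta}\|$ as the only (hence well‑separated) maximizers of $\gamma(\cdot)^2$ when $\alpha_1\neq1/2$, the standard argmax‑consistency argument gives $\mathrm{dist}(\textbf{u}_n,\{\pm\boldsymbol{\theta}/\|\boldsymbol{\theta}\|\})\to0$ a.s.; setting $s_n:=\mathrm{sign}(\textbf{u}_n'\boldsymbol{\theta})$, with $s_n:=1$ on the null event, yields $s_n\textbf{u}_n\to\boldsymbol{\theta}/\|\boldsymbol{\theta}\|$ a.s.

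\textbf{Asymptotic normality (part 2).} For large $n$, $\gamma_n(\cdot)^2$ is a.s.\ $C^\infty$ near $\textbf{e}_1$ (positive denominator), so parametrizing $\mathbb{S}^{p-1}$ near $\textbf{e}_1$ by the last $p-1$ coordinates $\textbf{v}$ and writing $g_n$ for the objective in this chart, part~1 guarantees that $\hat{\textbf{v}}_n$ eventually solves $\nabla_{\textbf{v}}g_n(\hat{\textbf{v}}_n)=\textbf{0}$, whence a Taylor expansion gives $\sqrt{n}\,\hat{\textbf{v}}_n=-[\nabla^2_{\textbf{v}}g_n(\textbf{v}_n^\ast)]^{-1}\sqrt{n}\,\nabla_{\textbf{v}}g_n(\textbf{0})$ for an intermediate point $\textbf{v}_n^\ast$. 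A useful observation is that, because $\alpha_1\neq1/2$ forces $\gamma_\ast:=\gamma(\textbf{e}_1)\neq0$ while the tangential gradient of $\gamma$ at $\textbf{e}_1$ vanishes, one has $\nabla_{\textbf{v}}g_n(\textbf{0})=2\gamma_n(\textbf{0})\nabla_{\textbf{v}}\gamma_n(\textbf{0})$ and $\nabla^2_{\textbf{v}}g_n(\textbf{0})\to 2\gamma_\ast\textbf{H}$ with $\textbf{H}:=\nabla^2_{\textbf{v}}\gamma(\textbf{0})$, so the factor $\gamma_\ast$ cancels in the sandwich and one may effectively work with $\gamma$ in place of $\gamma^2$. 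It then remains to: (i) apply the multivariate CLT to the sample moments entering $\nabla_{\textbf{v}}\gamma_n(\textbf{0})$ — being careful to account, as usual, for the fluctuation of the centring $\bar{\textbf{x}}$ — to get $\sqrt{n}\,\nabla_{\textbf{v}}\gamma_n(\textbf{0})\rightsquigarrow\mathcal{N}_{p-1}(\textbf{0},\textbf{V})$; (ii) combine part~1 with an a.s.\ uniform law of large numbers for the Hessian to conclude $\nabla^2_{\textbf{v}}g_n(\textbf{v}_n^\ast)\to2\gamma_\ast\textbf{H}$ a.s., with $\textbf{H}$ invertible; and (iii) deduce $\sqrt{n}\,\hat{\textbf{v}}_n\rightsquigarrow\mathcal{N}_{p-1}(\textbf{0},\textbf{H}^{-1}\textbf{V}\textbf{H}^{-1})$, hence $\sqrt{n}(s_n\textbf{u}_n-\textbf{e}_1)\rightsquigarrow\mathcal{N}_p(\textbf{0},\mathrm{diag}(0,\textbf{H}^{-1}\textbf{V}\textbf{H}^{-1}))$ in the canonical model, before transferring back as in the first paragraph.

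\textbf{Main obstacle.} The two substantive parts are (a) the care required for the a.s.\ convergence in part~1, which is precisely what licenses the well‑defined Taylor expansions in part~2 — the technical point the paper emphasizes as new — and (b) the moment bookkeeping in (i)--(iii): one finds $\textbf{H}=-3\,\mathrm{E}(z^3)\{\mathrm{E}(z^2)\}^{-5/2}\textbf{I}_{p-1}$ and $\textbf{V}=v\textbf{I}_{p-1}$ from the joint covariances of $(x_1^2x_j,\,x_1x_j,\,x_j)$ for $j\geq2$, so that $\textbf{H}^{-1}\textbf{V}\textbf{H}^{-1}=(v/h^2)\textbf{I}_{p-1}$ with $h$ the diagonal entry of $\textbf{H}$. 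Substituting the mixture moments $\mathrm{E}(z^2)=1+\beta\tau$, $\{\mathrm{E}(z^3)\}^2=\beta^2\tau^3(1-4\beta)$ and $\mathrm{E}(z^4)=\beta\tau^2(1-3\beta)+6\beta\tau+3$, a near‑total cancellation in the numerator leaves $2+6\beta\tau+\beta\tau^2$, and after the equivariance‑plus‑delta‑method transfer one recovers exactly $C_\gamma=(2+6\beta\tau+\beta\tau^2)/\{\beta\tau^2(1-4\beta)\}$; on this route the factor $1-4\beta$ in the denominator is visibly the square of the population skewness entering through $\textbf{H}^{-1}$, which also explains why the estimator degrades near $\alpha_1=1/2$.
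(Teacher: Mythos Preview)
Your proposal is correct, and in fact it takes a genuinely different route from the paper's proof. The paper works throughout in the original model: for consistency it verifies the Lipschitz condition of \cite{andrews1992generic} explicitly, and for normality it uses the Lagrangian to obtain the estimating equation $g_{n\gamma}(\textbf{u})=\tilde{s}_{n2}(\textbf{u})\tilde{\textbf{m}}_{n2}(\textbf{u})-\tilde{s}_{n3}(\textbf{u})\tilde{\textbf{m}}_{n1}(\textbf{u})=\textbf{0}$, Taylor-expands it, completes the singular limiting Jacobian to full rank via the unit-norm constraint (Lemma~\ref{lem:full_rank_1_inversion}), and computes the limiting covariance $\boldsymbol{\Pi}$ of $\sqrt{n}\,g_{n\gamma}(\textbf{u}_0)$ by summing the $36$ cross-terms of a six-part expansion (using Lemma~\ref{lem:normal_moments} for the mixed moments). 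You instead first reduce by affine equivariance to the canonical model $\boldsymbol{\Sigma}=\textbf{I}_p$, $\textbf{u}_0=\textbf{e}_1$, where $x_2,\dots,x_p$ are i.i.d.\ standard normal and independent of $x_1$, and work in a local chart on the sphere. This buys you a lot: both the Hessian and the score covariance are automatically scalar multiples of $\textbf{I}_{p-1}$, so the sandwich collapses to the single ratio $m_2(m_2m_4-m_3^2-m_2^3)/m_3^2$, which matches the paper's $s_2(s_2s_4-s_2^3-s_3^2)/s_3^2$ without any $36$-term bookkeeping or auxiliary lemmas; the projection matrices, the factor $\boldsymbol{\Sigma}^{-1}$ and the scalar $(1+\beta\tau)/(\|\boldsymbol{\theta}\|^2\beta)$ then fall out of the delta method for $\textbf{v}\mapsto\textbf{B}'\textbf{v}/\|\textbf{B}'\textbf{v}\|$. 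Your consistency argument (polynomials of fixed degree with a.s.\ convergent coefficients converge uniformly on the compact sphere, and the denominator is eventually bounded below by $\lambda_{\min}(\textbf{S}_n)$) is also cleaner than the paper's Lipschitz verification. The trade-off is that the paper's estimating-equation approach is written so as to be reused verbatim in the hybrid proof of Theorem~\ref{theo:asnorm_blind_3}, where the cross-covariance between the skewness and kurtosis scores is needed; your chart-based calculation would have to be redone there, though the same equivariance reduction would again simplify it.
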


Interestingly, also the limiting covariance of the skewness-based estimator is proportional to that of LDA, meaning that the main object of interest in the result is the factor $C_\gamma$. These factors will be compared in the next section to make statements about the relative efficiencies of the estimators under various scenarios.

\subsection{Asymptotic comparison of the three estimators}\label{subsec:comparison}

Theorems \ref{theo:asnorm_unblind}, \ref{theo:asnorm_blind} and \ref{theo:asnorm_blind_2} show that the limiting distributions of the unblind and blind estimators of $\boldsymbol{\theta}/\| \boldsymbol{\theta} \|$ all have proportional covariance matrices. Thus, their efficiencies may be compared simply through the corresponding constants of proportionality which depend on the problem parameters only through the mixing proportion ($\beta = \alpha_1 \alpha_2$) and the degree of separation between the two groups, as measured by the (squared) Mahalanobis distance $\tau$. The relative asymptotic efficiencies (pair-wise ratios of the constants) of the blind estimators vs. the unblind estimator (LDA) are simply $C_\kappa^{-1}$ and $C_\gamma^{-1}$ where the values of the constants are given in Theorems \ref{theo:asnorm_blind} and \ref{theo:asnorm_blind_2}. Especially the former expression is somewhat complicated for arbitrary $\beta$ and $\tau$ but both simplify greatly if we consider the case where the Mahalanobis distance $\tau$ is large. That is, letting $\tau \rightarrow \infty$, the relative asymptotic efficiencies are simply
\begin{align*}
    \mathrm{Eff}_\kappa = \frac{(1 - 6 \beta)^2}{1 - 3 \beta} \quad \mbox{and} \quad \mathrm{Eff}_\gamma = 1 - 4 \beta.
\end{align*}

\begin{figure}[ht]
\centering
    \includegraphics[width = 1\textwidth]{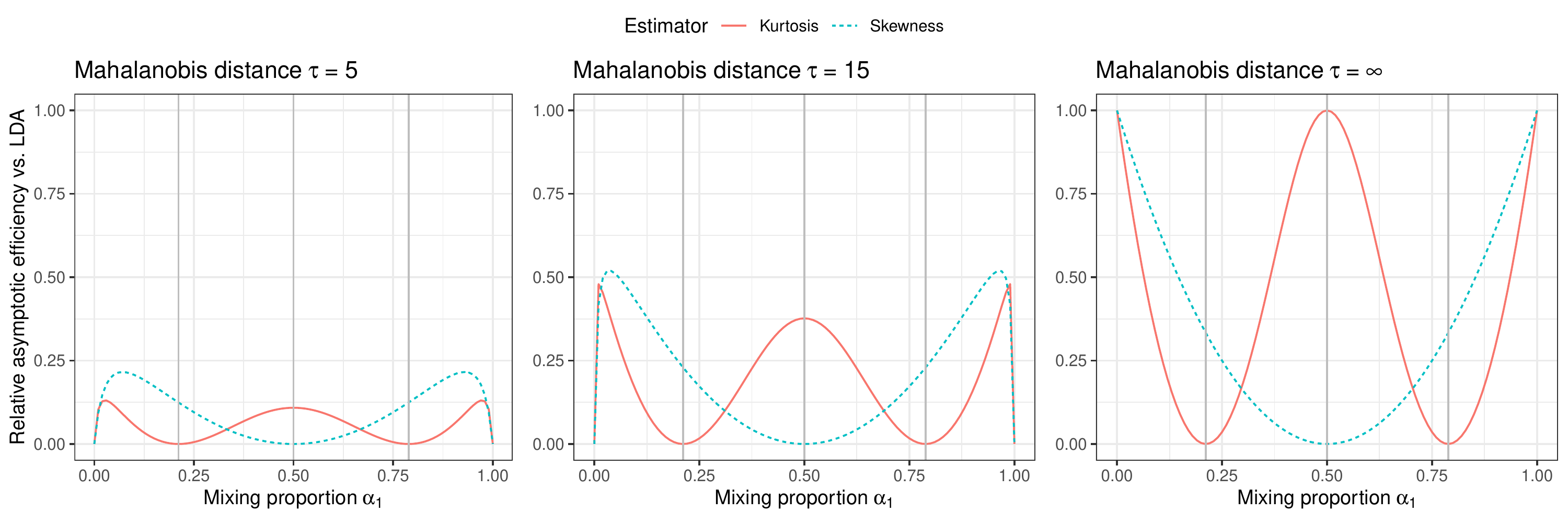}
    \caption{ The relative asymptotic efficiency of the blind vs. unblind estimation of $\boldsymbol{\theta}/\| \boldsymbol{\theta} \|$ under different Mahalanobis distances $\tau$ between the two group means. 
 The three gray vertical reference lines mark those values of $\alpha_1$ where kurtosis ($1/2 \pm 1/\sqrt{12}$) or skewness ($1/2$) of all projections is a constant.}
    \label{fig:as_rel_eff}
\end{figure}
Figure~\ref{fig:as_rel_eff} plots the relative efficiencies as a function of the first mixing proportion $\alpha_1$ in the cases $\tau = 5, 15$ and $\tau \rightarrow \infty$. The plots verify that, for any practical value of the Mahalanobis distance, LDA is always asymptotically highly superior to both blind methods. However, in the extreme case where the two groups are well-separated to an arbitrarily large degree, we see, in particular, that the kurtosis estimator is asymptotically equally efficient to LDA in the balanced case $\alpha_1 = 1/2$, and also in the limits $\alpha_1 \rightarrow 0$ and $\alpha_1 \rightarrow 1$ (although, in these cases the actual asymptotic covariance matrices themselves grow without bounds).

\begin{figure}[ht]
\centering
    \includegraphics[width=0.6\textwidth]{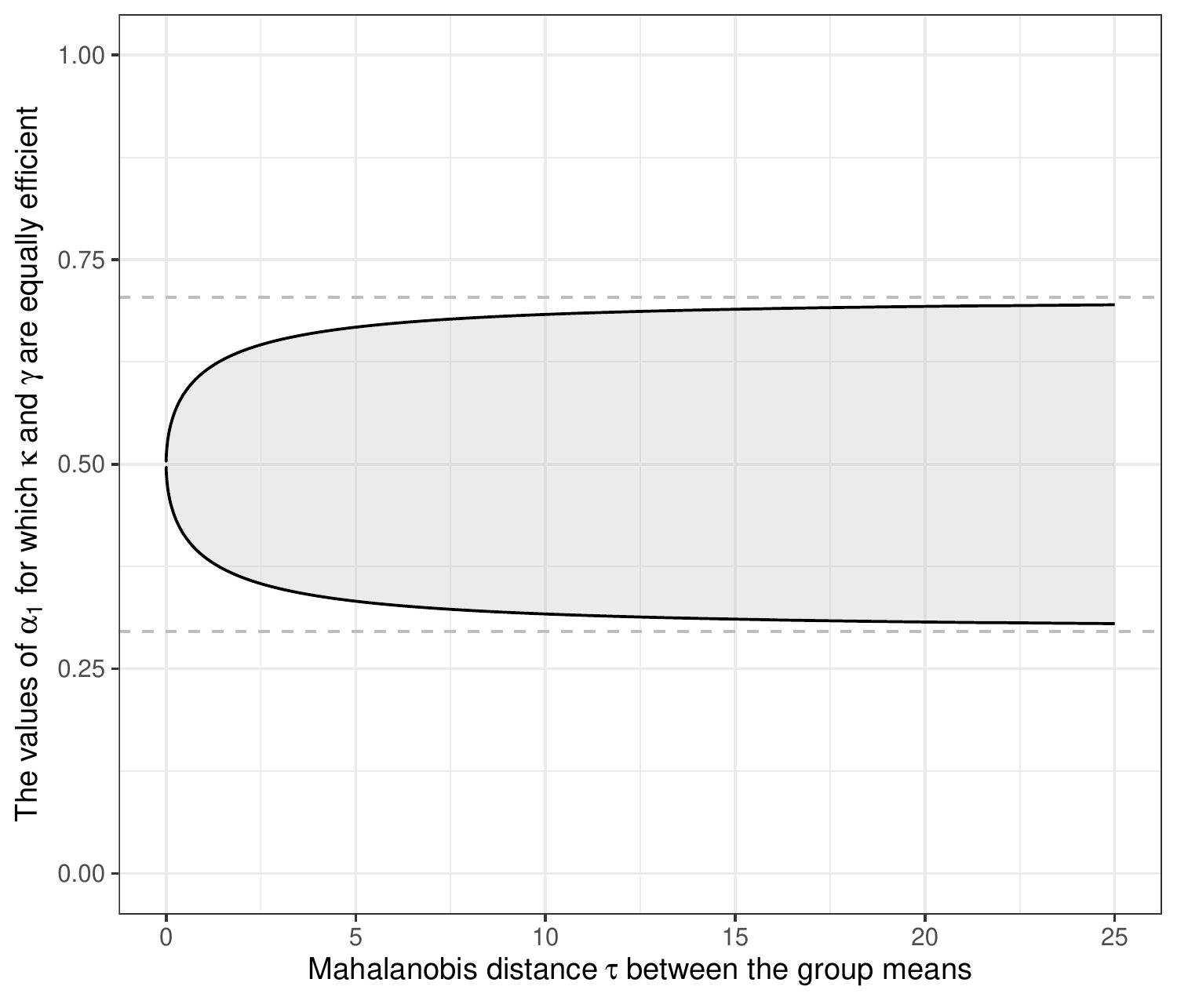}
    \caption{The two solid curves trace the values of the mixing proportion $\alpha_1$ ($y$-axis) for which $\kappa$ and $\gamma$ are asymptotically equally efficient as a function of the Mahalanobis distance between the two group means ($x$-axis). The shaded region represents the values of $\alpha_1$ for which $\kappa$ is the superior choice over $\gamma$. The two horizontal dashed lines indicate the limits $ 1/2 \pm 1/\sqrt{24}$ of the two curves.}
    \label{fig:as_rel_same}
\end{figure}

Figure~\ref{fig:as_rel_eff} also shows that, depending on the Mahalanobis distance, around the point $\alpha_1 \approx 0.30$ there is a mixing proportion for which $\kappa$ and $\gamma$ are asymptotically equally efficient. Figure~\ref{fig:as_rel_same} plots these proportions (and their mirror images on the upper half of the region $(0, 1)$) as a function of the Mahalanobis distance. The plot reveals that the region of mixing proportions for which $\kappa$ is asymptotically superior choice to $\gamma$ (the gray inner region) gets wider as the groups get more well-separated, finally approaching the region $ 1/2 \pm 1/\sqrt{24} $ in the limit $\tau \rightarrow \infty$ (the two horizontal dashed lines).



\section{Convex combination of skewness and kurtosis}\label{sec:combination}

\subsection{Theoretical properties}

Based on Figure~\ref{fig:as_rel_eff}, the objective functions $\kappa$ and $\gamma$ produce, especially for well-separated groups, fairly efficient estimators of the linear discriminant in the absence of any grouping information. However, for this, it is crucial to have at least an approximate idea of the mixing proportion $\alpha_1$ in order to choose the more efficient of the two objective functions and to avoid the points where a particular estimator becomes completely inefficient (for example, if the groups are close to being balanced, one wants to use kurtosis as skewness contains no information when $\gamma = 1/2$, see Lemma \ref{lem:skewness_fisher} and Figure~\ref{fig:as_rel_eff}). As the mixing proportion is rarely known in practice, this makes the procedure difficult to implement.

A natural way to overcome this weakness is to, instead of choosing between $\gamma$ and $\kappa$, use them both simultaneously, through a objective function $\eta:\mathbb{S}^{p - 1} \to \mathbb{R}$ that is a convex combination of the two squared cumulants,
\begin{align*}
     \eta(\textbf{u})=\eta(\textbf{u};w_1) := w_1 \gamma(\textbf{u})^2 + w_2 \{ \kappa(\textbf{u}) - 3 \}^2,
\end{align*}
where $w_1, w_2 \geq 0$, $w_1 + w_2 = 1$. Naturally, the cases $w_1 = 0$ and $w_2 = 0$ simply correspond to the two individual objective functions and, hence, we will in the following assume that $w_1, w_2 > 0$. The next result, following straightforwardly from Lemmas \ref{lem:kurtosis_fisher} and \ref{lem:skewness_fisher}, shows that $\eta$ indeed allows the completely blind recovery of the linear discriminant regardless of the mixing proportion $\alpha_1 \in (0, 1)$.

\begin{lemma}\label{lem:convex_fisher}
    Given model \eqref{eq:xy_model}, $\eta$ is uniquely maximized by $ \pm \boldsymbol{\theta}/\| \boldsymbol{\theta} \|$.
\end{lemma}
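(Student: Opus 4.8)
The plan is to derive Lemma~\ref{lem:convex_fisher} directly from Lemmas~\ref{lem:kurtosis_fisher} and~\ref{lem:skewness_fisher} by a simple case analysis on the mixing proportion $\alpha_1$, exploiting the fact that the two ``bad'' parameter sets for the individual indices, namely $\{\delta_1,\delta_2\}$ for kurtosis and $\{1/2\}$ for skewness, are disjoint. Since $w_1,w_2>0$ and both $\gamma(\textbf{u})^2$ and $\{\kappa(\textbf{u})-3\}^2$ are nonnegative on $\mathbb{S}^{p-1}$, the combined index $\eta(\textbf{u})=w_1\gamma(\textbf{u})^2+w_2\{\kappa(\textbf{u})-3\}^2$ is a nonnegative sum, so a direction maximizes $\eta$ as soon as it simultaneously maximizes both summands, and it cannot do better than the smaller of the two individual maxima forces.

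First I would treat the generic case $\alpha_1\notin\{\delta_1,\delta_2,1/2\}$. Here Lemma~\ref{lem:kurtosis_fisher}(1) says $\{\kappa(\textbf{u})-3\}^2$ is uniquely maximized by $\pm\boldsymbol{\theta}/\|\boldsymbol{\theta}\|$, and Lemma~\ref{lem:skewness_fisher}(1) says the same for $\gamma(\textbf{u})^2$. Hence for any $\textbf{u}\ne\pm\boldsymbol{\theta}/\|\boldsymbol{\theta}\|$ we have the strict inequalities $\gamma(\textbf{u})^2<\gamma(\boldsymbol{\theta}/\|\boldsymbol{\theta}\|)^2$ and $\{\kappa(\textbf{u})-3\}^2<\{\kappa(\boldsymbol{\theta}/\|\boldsymbol{\theta}\|)-3\}^2$, and multiplying by the strictly positive weights and adding gives $\eta(\textbf{u})<\eta(\boldsymbol{\theta}/\|\boldsymbol{\theta}\|)$; since $\eta$ is an even function of $\textbf{u}$, the maximizer set is exactly $\{\pm\boldsymbol{\theta}/\|\boldsymbol{\theta}\|\}$.

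Next I would handle the boundary cases. If $\alpha_1\in\{\delta_1,\delta_2\}$, then by Lemma~\ref{lem:kurtosis_fisher}(2) the kurtosis term vanishes identically, so $\eta(\textbf{u})=w_1\gamma(\textbf{u})^2$; since $\alpha_1\ne 1/2$ in this case, Lemma~\ref{lem:skewness_fisher}(1) gives that $w_1\gamma(\textbf{u})^2$ is uniquely maximized (up to sign) by $\pm\boldsymbol{\theta}/\|\boldsymbol{\theta}\|$, using $w_1>0$. Symmetrically, if $\alpha_1=1/2$, then Lemma~\ref{lem:skewness_fisher}(2) kills the skewness term, $\eta(\textbf{u})=w_2\{\kappa(\textbf{u})-3\}^2$, and since $1/2\notin\{\delta_1,\delta_2\}$ Lemma~\ref{lem:kurtosis_fisher}(1) together with $w_2>0$ yields the claim. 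These three cases exhaust $(0,1)$, completing the proof.

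There is essentially no obstacle here: the only thing to be careful about is not to overlook that the two degenerate parameter sets are disjoint (so that at most one of the two summands can collapse for any fixed $\alpha_1$), and to invoke the positivity of the weights $w_1,w_2$ so that a vanishing summand does not interfere with the uniqueness coming from the other. I would phrase the write-up as the short three-way case split above, with the generic case as the main line of argument and the two degenerate cases dispatched in a sentence each.
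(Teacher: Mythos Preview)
Your proposal is correct and is exactly the argument the paper has in mind: the paper does not give a separate proof of Lemma~\ref{lem:convex_fisher} but simply states that it follows straightforwardly from Lemmas~\ref{lem:kurtosis_fisher} and~\ref{lem:skewness_fisher}, which is precisely your three-way case split on~$\alpha_1$. Your write-up is, if anything, more explicit than the paper's own treatment.
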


The sample version of the hybrid objective function is $\eta_n: \mathbb{S}^{p - 1} \to \mathbb{R}$, defined as $\eta_n(\textbf{u})=\eta_n(\textbf{u}; w_1):= w_1 \gamma_n(\textbf{u})^2 + w_2  \{ \kappa_n(\textbf{u}) - 3 \}^2$, and we next give the limiting behavior of its maximizer. Unsurprisingly, the resulting limiting covariance matrix is up to a multiplicative constant equal to the previous ones.

\begin{theorem}\label{theo:asnorm_blind_3}
    Given model \eqref{eq:xy_model}, let $\textbf{u}_n$ be any sequence of maximizers of $\eta_n$. Then, there exists a sequence of signs $s_n \in \{ -1 , 1 \}$ such that, as $n \rightarrow \infty$,
    \begin{itemize}
        \item[1)] $s_n \textbf{u}_n \rightarrow \boldsymbol{\theta}/\| \boldsymbol{\theta} \|$, almost surely.
        \item[2)] $ \sqrt{n}(s_n \textbf{u}_n - \boldsymbol{\theta}/\| \boldsymbol{\theta} \|) \rightsquigarrow \mathcal{N}_p(\textbf{0}, \boldsymbol{\Psi}_{\eta}) $, where
        \begin{align*}
            \boldsymbol{\Psi}_{\eta} := C_\eta \left( \frac{1 + \beta \tau}{\| \boldsymbol{\theta} \|^2 \beta} \right) \left( \textbf{I}_p - \frac{\boldsymbol{\theta} \boldsymbol{\theta}'}{\| \boldsymbol{\theta} \|^2} \right) \boldsymbol{\Sigma}^{-1} \left( \textbf{I}_p - \frac{\boldsymbol{\theta} \boldsymbol{\theta}'}{\| \boldsymbol{\theta} \|^2} \right),
        \end{align*}
        and
        \begin{align*}
        C_\eta =& \frac{(1+\beta\tau)(1-4\beta)\left( 9 w_1^2 (1 + \beta \tau)(2 + 6 \beta \tau + \beta \tau^2) + 24 w_1 w_2 \tau^{2}\beta (1 - 6 \beta) (6 + \tau)\right)}{ \beta \tau^{2} \{ 3 w_1 (1 + \beta \tau) (1 - 4\beta) + 4 w_2 \tau (1 - 6 \beta)^2 \}^2 }\\
        &+ \frac{16 w_2^2 \tau (1 - 6 \beta)^2 \Delta }{ \beta \tau^{2} \{ 3 w_1 (1 + \beta \tau) (1 - 4\beta) + 4 w_2 \tau (1 - 6 \beta)^2 \}^2 },
    \end{align*}
        where $\Delta := 6 + 24 \beta \tau + 9 \beta (1 - 2 \beta) \tau^2 + \beta (1 - 3 \beta) \tau^3 $ and $\beta, \tau$ are as in Theorem \ref{theo:asnorm_blind}.
    \end{itemize}
\end{theorem}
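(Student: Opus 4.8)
The plan is to treat $\eta_n$ as an M-estimator of $\boldsymbol{\theta}_0 := \boldsymbol{\theta}/\|\boldsymbol{\theta}\|$ and to reuse, as far as possible, the machinery developed for Theorems~\ref{theo:asnorm_blind} and \ref{theo:asnorm_blind_2}, observing that $\eta_n$ is a fixed $C^2$ function of $\textbf{u}$ and of the empirical moments $m_{n,k}(\textbf{u}) := (1/n)\sum_{i=1}^n (\textbf{u}'\tilde{\textbf{x}}_i)^k$, $k = 2,3,4$. For part~1), I would first note that, by the strong law of large numbers applied to the arrays of sample moments of $\tilde{\textbf{x}}$ up to fourth order (together with $\bar{\textbf{x}} \to \mathrm{E}(\textbf{x})$ a.s.), $m_{n,k}(\textbf{u}) \to m_k(\textbf{u})$ uniformly over the compact sphere $\mathbb{S}^{p-1}$, almost surely; since $\inf_{\textbf{u}} m_2(\textbf{u}) > 0$ and $\eta_n$ depends continuously on $(m_{n,2}, m_{n,3}, m_{n,4}, \textbf{u})$, this upgrades to $\sup_{\textbf{u}\in\mathbb{S}^{p-1}}|\eta_n(\textbf{u}) - \eta(\textbf{u})| \to 0$ a.s. Combined with the unique (up to sign) maximization in Lemma~\ref{lem:convex_fisher}, a standard argmax/Wald argument gives that any sequence of maximizers accumulates only at $\{\pm\boldsymbol{\theta}_0\}$, and letting $s_n$ select the branch nearer to $\boldsymbol{\theta}_0$ yields $s_n\textbf{u}_n \to \boldsymbol{\theta}_0$ a.s., which is also what legitimizes the local Taylor expansions used below.

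For part~2), I would parametrize the sphere near $\boldsymbol{\theta}_0$ by a smooth chart $\textbf{u}(\textbf{t})$, $\textbf{t}$ in a neighbourhood of $\textbf{0}\in\mathbb{R}^{p-1}$, with $\textbf{u}(\textbf{0}) = \boldsymbol{\theta}_0$ and the derivative of $\textbf{u}$ at $\textbf{0}$ a linear isometry onto the tangent space $\boldsymbol{\theta}_0^\perp$. By part~1) the point $s_n\textbf{u}_n$ lies eventually in this chart, and its coordinate $\textbf{t}_n$ satisfies the stationarity equation $\nabla_{\textbf{t}}\eta_n(\textbf{u}(\textbf{t}_n)) = \textbf{0}$; a second-order Taylor expansion (valid a.s. for large $n$ since $\eta_n$ is $C^2$ near $\boldsymbol{\theta}_0$ and $\textbf{t}_n \to \textbf{0}$) then gives $\sqrt{n}\,\textbf{t}_n = -\textbf{H}_n^{-1}\sqrt{n}\,\nabla_{\textbf{t}}\eta_n(\boldsymbol{\theta}_0) + o_P(1)$ with $\textbf{H}_n$ the Hessian at an intermediate point. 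Differentiating $\eta_n = w_1\gamma_n^2 + w_2(\kappa_n - 3)^2$ shows both pieces decompose along the skewness and kurtosis parts: at $\boldsymbol{\theta}_0$ the population tangential gradients of $\gamma$ and $\kappa$ vanish (immediately when the corresponding cumulant is nonzero, by Lemmas~\ref{lem:kurtosis_fisher}--\ref{lem:skewness_fisher}, and trivially otherwise, since the index is then constant on $\mathbb{S}^{p-1}$), so $\nabla_{\textbf{t}}\eta_n(\boldsymbol{\theta}_0) = 2w_1\gamma_n(\boldsymbol{\theta}_0)\nabla_{\textbf{t}}\gamma_n(\boldsymbol{\theta}_0) + 2w_2\{\kappa_n(\boldsymbol{\theta}_0) - 3\}\nabla_{\textbf{t}}\kappa_n(\boldsymbol{\theta}_0)$ and $\textbf{H}_n \to \textbf{H} := 2w_1\gamma(\boldsymbol{\theta}_0)\nabla^2_{\textbf{t}}\gamma(\boldsymbol{\theta}_0) + 2w_2\{\kappa(\boldsymbol{\theta}_0) - 3\}\nabla^2_{\textbf{t}}\kappa(\boldsymbol{\theta}_0)$ a.s. Since $\sqrt{n}\,\nabla_{\textbf{t}}\gamma_n(\boldsymbol{\theta}_0)$ and $\sqrt{n}\,\nabla_{\textbf{t}}\kappa_n(\boldsymbol{\theta}_0)$ are smooth transformations of $\sqrt{n}$ times the centred empirical moment process, they are jointly asymptotically normal by the multivariate CLT and the delta method; this yields the asymptotic normality of $\sqrt{n}\,\textbf{t}_n$, and the chart transfers it to $\sqrt{n}(s_n\textbf{u}_n - \boldsymbol{\theta}_0)$, supported on $\boldsymbol{\theta}_0^\perp$, which is the source of the projection matrices $\textbf{I}_p - \boldsymbol{\theta}\boldsymbol{\theta}'/\|\boldsymbol{\theta}\|^2$.

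The limiting covariance is then $\textbf{H}^{-1}\textbf{V}\textbf{H}^{-1}$, where $\textbf{V}$ is the asymptotic covariance of $\sqrt{n}\,\nabla_{\textbf{t}}\eta_n(\boldsymbol{\theta}_0)$. To make $\textbf{H}$ and $\textbf{V}$ explicit I would exploit the affine equivariance of the estimation problem to assume, without loss of generality, $\boldsymbol{\Sigma} = \textbf{I}_p$ and $\boldsymbol{\mu}_2 - \boldsymbol{\mu}_1 = \sqrt{\tau}\,\textbf{e}_1$, so that $\boldsymbol{\theta}_0 = \textbf{e}_1$, all required population moments become explicit functions of $\beta$ and $\tau$, and the rotational symmetry of the model within $\boldsymbol{\theta}_0^\perp$ forces both $\textbf{H}$ and $\textbf{V}$ to be scalar multiples of $\textbf{I}_{p-1}$. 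Writing $\textbf{H} = h(\beta,\tau;w_1,w_2)\textbf{I}_{p-1}$ with $h$ \emph{linear} in $(w_1,w_2)$, and $\textbf{V} = v(\beta,\tau;w_1,w_2)\textbf{I}_{p-1}$ with $v = 4w_1^2\gamma(\boldsymbol{\theta}_0)^2 V_{\gamma\gamma} + 8w_1 w_2\gamma(\boldsymbol{\theta}_0)\{\kappa(\boldsymbol{\theta}_0) - 3\}V_{\gamma\kappa} + 4w_2^2\{\kappa(\boldsymbol{\theta}_0) - 3\}^2 V_{\kappa\kappa}$ \emph{quadratic} in $(w_1,w_2)$, one gets $\textbf{H}^{-1}\textbf{V}\textbf{H}^{-1} = (v/h^2)\textbf{I}_{p-1}$, and undoing the standardizing transformation reinstates the factor $(1 + \beta\tau)/(\|\boldsymbol{\theta}\|^2\beta)$ and the $\boldsymbol{\Sigma}^{-1}$ flanked by the two projections; the residual ratio $v/h^2$, being quadratic over linear-squared in $(w_1,w_2)$, has exactly the form of the displayed $C_\eta$. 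Nonsingularity of $\textbf{H}$ on $\boldsymbol{\theta}_0^\perp$ is inherited: the proofs of Theorems~\ref{theo:asnorm_blind}--\ref{theo:asnorm_blind_2} establish that $\gamma(\boldsymbol{\theta}_0)\nabla^2_{\textbf{t}}\gamma(\boldsymbol{\theta}_0)$ and $\{\kappa(\boldsymbol{\theta}_0) - 3\}\nabla^2_{\textbf{t}}\kappa(\boldsymbol{\theta}_0)$ are each negative definite whenever the respective cumulant does not vanish, and since $w_1, w_2 > 0$ the sum remains negative definite even at the degenerate proportions, where one summand drops out but the other survives.

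The genuinely new, and most laborious, ingredient is the cross-covariance block $V_{\gamma\kappa}$: the blocks $V_{\gamma\gamma}$ and $V_{\kappa\kappa}$ are precisely the quantities underlying $C_\gamma$ and $C_\kappa$ in Theorems~\ref{theo:asnorm_blind_2} and \ref{theo:asnorm_blind}, but $V_{\gamma\kappa}$ requires the joint moments of the relevant cubic and quartic statistics of a univariate two-component normal mixture (joint moments up to order seven), and carrying this computation through while compressing the output into the compact $(\beta,\tau)$ form of $C_\eta$ is where the bulk of the algebra lies. A useful internal consistency check is that the formula must collapse to $C_\kappa$ when $\beta = 1/4$ (skewness carries no information, $\gamma(\boldsymbol{\theta}_0) = 0$) and to $C_\gamma$ when $\beta = 1/6$ (excess kurtosis carries no information, $\kappa(\boldsymbol{\theta}_0) = 3$); both are visible in the stated expression, since the first summand of $C_\eta$ carries a factor $(1 - 4\beta)$ and the $w_1 w_2$ cross term carries a factor $(1 - 6\beta)$.
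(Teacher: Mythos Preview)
Your proposal is correct and follows the same overall strategy as the paper: strong consistency via uniform convergence of $\eta_n$ and the argmax theorem, then asymptotic normality by linearising the score at $\boldsymbol{\theta}_0$, reusing the skewness and kurtosis pieces from Theorems~\ref{theo:asnorm_blind} and~\ref{theo:asnorm_blind_2} and isolating the cross-covariance as the only new computation.

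There are two purely technical differences worth flagging. First, you handle the unit-length constraint via a local chart on $\mathbb{S}^{p-1}$, whereas the paper works with the estimating equation $g_{n\eta}(\textbf{u})=3w_1\gamma_n\tilde{s}_{n2}^{1/2}g_{n\gamma}+4w_2(\kappa_n-3)g_{n\kappa}=\textbf{0}$ in $\mathbb{R}^p$ and then repairs the rank-deficient Jacobian by adding a multiple of $\textbf{h}(s_n\textbf{u}_n+\textbf{u}_0)'$, which vanishes on $\sqrt{n}(s_n\textbf{u}_n-\textbf{u}_0)$; your chart avoids this artifice at the cost of an extra coordinate translation at the end. Second, you reduce to $\boldsymbol{\Sigma}=\textbf{I}_p$ by affine equivariance and transform back, whereas the paper computes directly with general $\boldsymbol{\Sigma}$ using the moment identities $\textbf{m}_k(\textbf{u}_0)=\psi s_{k+1}\textbf{h}$ and $\textbf{G}_k(\textbf{u}_0)=s_k\boldsymbol{\Sigma}+\tau^{-1}\{\psi^2 s_{k+2}-s_k\}\textbf{h}\textbf{h}'$ from Lemma~\ref{lem:normal_moments}. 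Your reduction is legitimate (the map $\textbf{v}\mapsto\boldsymbol{\Sigma}^{-1/2}\textbf{R}'\textbf{v}/\|\boldsymbol{\Sigma}^{-1/2}\textbf{R}'\textbf{v}\|$ does produce the stated sandwich via the delta method), but you should spell that Jacobian out rather than assert the outcome. The paper's direct route, in turn, makes the common factor $(\boldsymbol{\Sigma}-\tau^{-1}\textbf{h}\textbf{h}')$ in $\boldsymbol{\Pi}$ emerge without a back-transformation. Both routes land on the same cross-term, which the paper computes as $\mathrm{Cov}(\textbf{y}_1,\textbf{y}_2)=s_2(s_2 s_5-s_2^2 s_3-s_3 s_4)(\boldsymbol{\Sigma}-\tau^{-1}\textbf{h}\textbf{h}')$ and which indeed requires the seventh-order moment $s_5$ you anticipate.
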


The constant of proportionality $C_\eta$ in Theorem \ref{theo:asnorm_blind_3} is again rather complicated but simplifies in the limit $\tau \rightarrow \infty$ to the more manageable, if not intuitive, form,
\begin{align*}
\frac{9 w_1^2 \beta^2 (1 - 4\beta) + 24 w_1 w_2 \beta (1 - 4\beta) (1 - 6 \beta) + 16 w_2^2 (1 - 6 \beta)^2 (1 - 3 \beta) }{ \{ 3 w_1 \beta (1 - 4\beta) + 4 w_2 (1 - 6 \beta)^2  \}^2 }.
\end{align*}

\subsection{Asymptotic comparisons}

We next investigate how the efficiency of the hybrid estimator compares to its competitors. Figure~\ref{fig:as_fig_3} shows the relative asymptotic efficiency of the hybrid estimator vs. LDA as a function of the mixing proportion $\alpha_1$ for the same values of $\tau$ as in Figure~\ref{fig:as_rel_eff} and for $w_1 \in \{ 0, 0.2, 0.4, 0.6, 0.8, 1 \}$. Note that the extreme cases $w_1 = 0$ and $w_1 = 1$ are equivalent to using the individual objective functions $\textbf{u} \mapsto \{ \kappa(\textbf{u}) - 3 \}^2$ and $\textbf{u} \mapsto \gamma(\textbf{u})^2$, respectively. The curves show somewhat erratic behavior around the points $\delta_1, \delta_2$ where kurtosis vanishes but otherwise seem to convey a clear message: inside the interval $(\delta_1, \delta_2)$ the hybrid estimator is, in general, a superior choice over the indivdual estimators, whereas outside of the interval the choice $w_1 = 1$ (corresponding to using skewness only) is preferable over the hybrid estimator.

\begin{figure}[ht]
\centering
    \includegraphics[width = 1\textwidth]{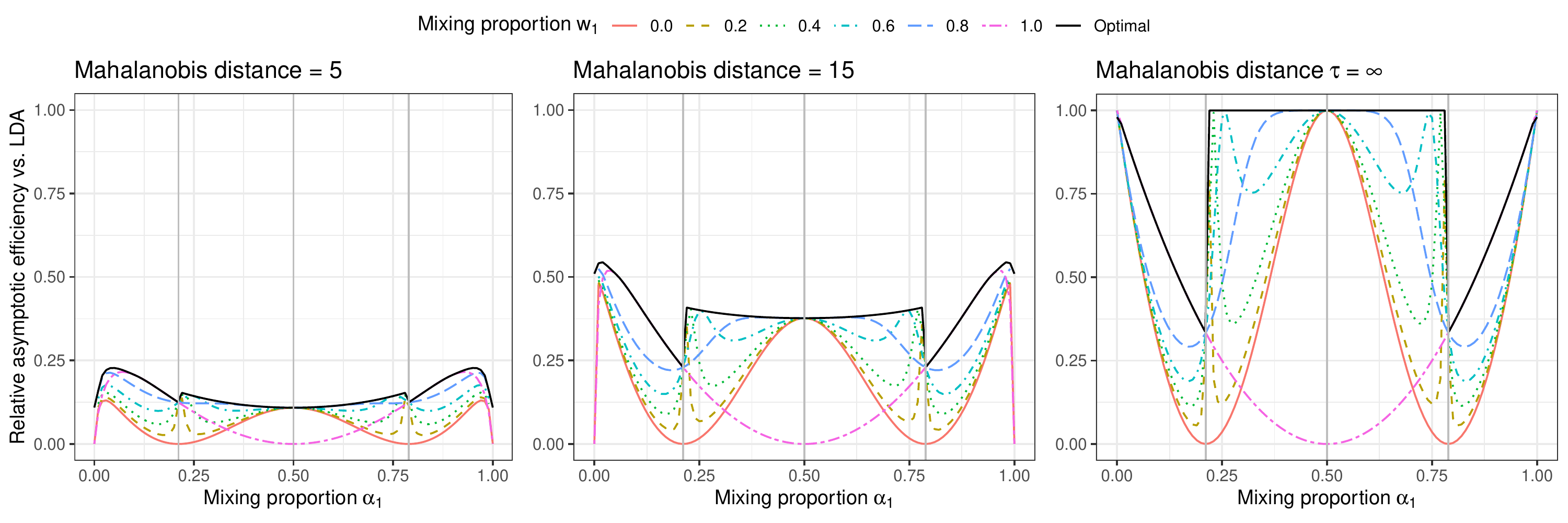}
    \caption{The relative asymptotic efficiency of the hybrid estimator vs. LDA under different Mahalanobis distances $\tau$ between the two group means. The colour and type of the lines denotes the weighting parameter $w_1$.
     The solid black line titled ``optimal'' traces the efficiencies obtained using the optimal choice of weighting for a given pair $(\alpha_1, \tau)$.}
    \label{fig:as_fig_3}
\end{figure}


To obtain a ``universal'' value of $w_1$ that yields (in some sense) on average the most efficient estimator over all $\alpha_1$, we compute with numerical integration the ``average efficiency'' $A(w_1, \tau)$ of the weight $w_1$ for a given value of $\tau$ as the area between the $x$-axis and the corresponding efficiency curve. For example $A(0, 15)$ is the area under the red solid curve in the middle panel of Figure~\ref{fig:as_fig_3}. Figure~\ref{fig:as_fig_4} then plots the weights $w_1$ yielding the maximal value of $A(w_1, \tau)$ as a function of the Mahalanobis distance $\tau$ and reveals that, regardless of the separation of the groups, one should optimally consider weights only in the range around $\tau \in (0.725, 0.825)$. This conclusion is rather predictable as kurtosis is based on a higher moment than skewness, meaning that the latter should be given a larger weight in order to obtain a ``balanced'' combination. As a further interesting observation, when $\tau \rightarrow 0$, i.e., when the mixture model approaches the multivariate normal model, the limit of the optimal weight seems to approach the value 0.8, which is the exact weighting used in the Jarque-Bera test statistic for testing normality, $(n/6) \gamma_n^2 + (n/24) (\kappa_n - 3)^2$ \citep{JarqueBera1980}. Moreover, the same weighting was also recommended by \cite{jones1987projection} as an approximation to an entropy-based index.

\begin{figure}[ht]
    \centering
    \begin{minipage}{0.45\textwidth}
  \centering
    \includegraphics[width=0.9\textwidth]{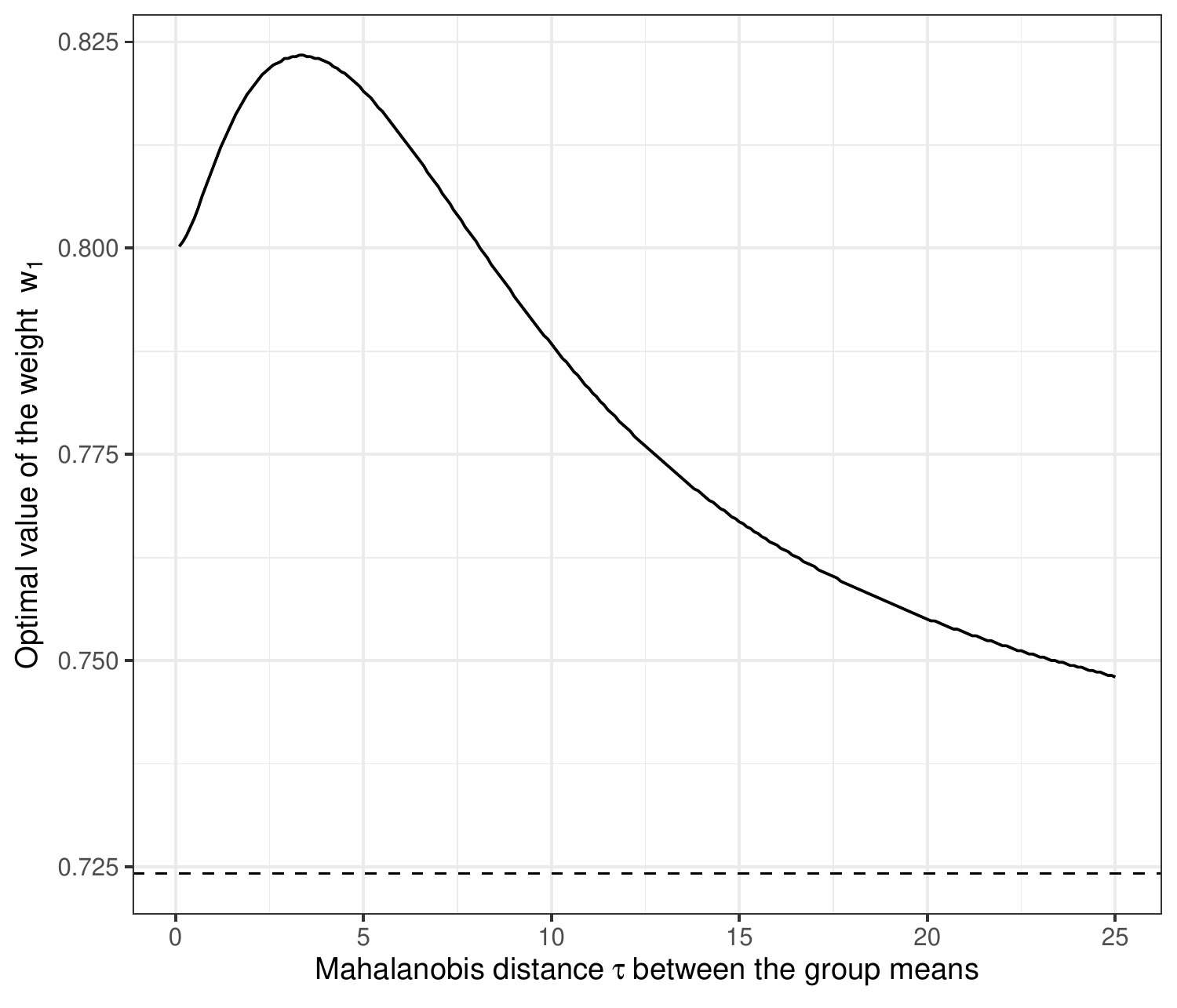}
    \caption{
    The weight $w_1$ yielding the maximal area under the corresponding efficiency curve as a function of the Mahalanonis distance $\tau$. The horizontal dashed line indicates the limit of the curve as $\tau \rightarrow \infty$ (approximately 0.7242).}
    \label{fig:as_fig_4}
    \end{minipage}\hfill
    \begin{minipage}{0.45\textwidth}
  \centering
    \includegraphics[width=1\textwidth]{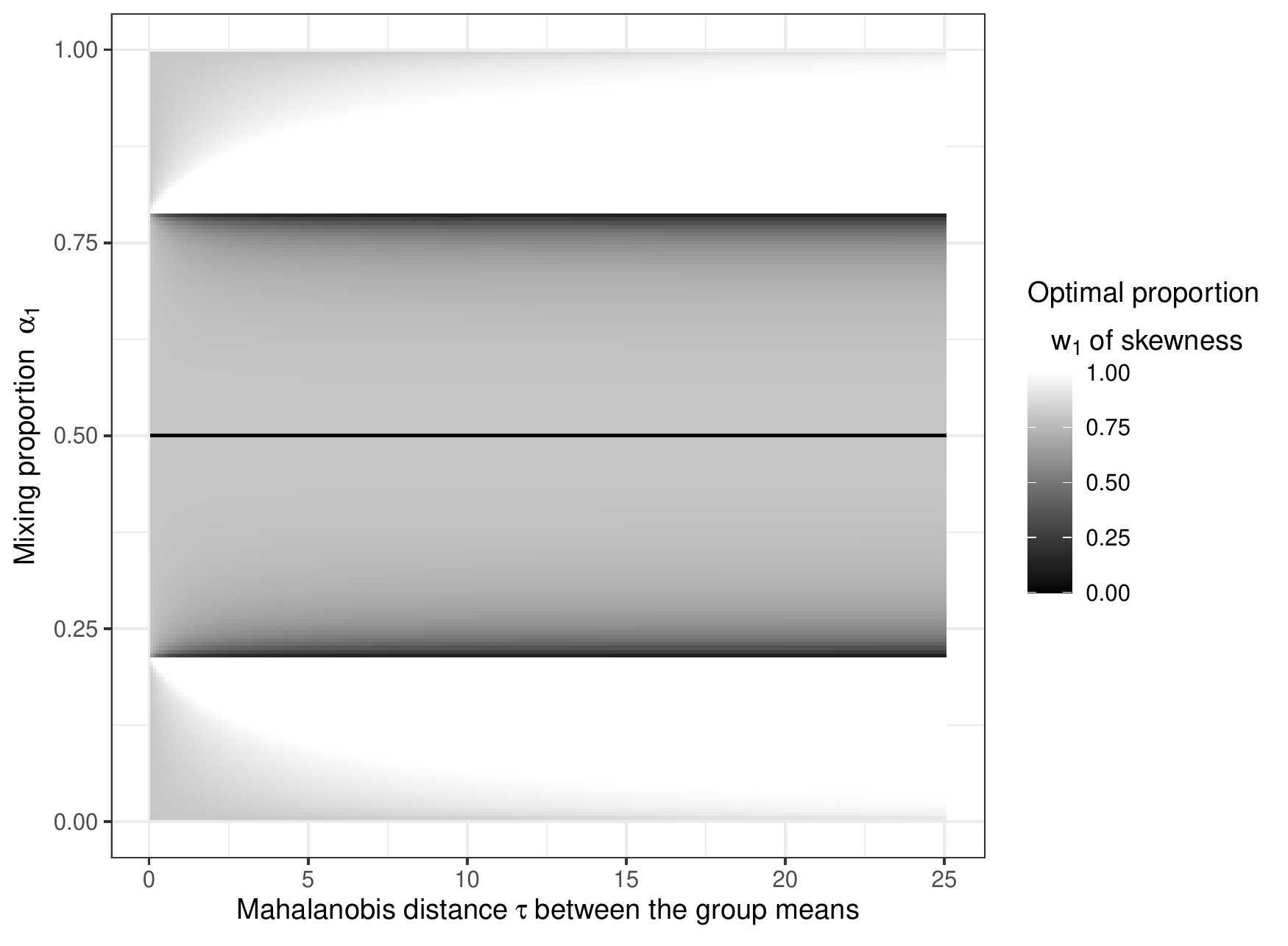}
    \caption{The heatmap shows as a function of $(\alpha_1, \tau)$ the weighting $w_1$ yielding the highest relative asymptotic efficiency compared to LDA. The three discontinuities correspond to the lines $\alpha_1 = \delta_1$, $\alpha_1 = 1/2$ and $\alpha_1 = \delta_2$.}
    \label{fig:as_fig_5}
    \end{minipage}
\end{figure}

Whereas Figure~\ref{fig:as_fig_4} aims to obtain a single universally useful value of $w_1$, in the optimal situation one would always use the particular weighting yielding the highest relative asymptotic efficiency for a given combination of mixing proportion and Mahalanobis distance. The optimal weights are plotted as a function of $(\alpha_1, \tau)$ in the heatmap of Figure~\ref{fig:as_fig_5}, the most striking features of which are the discontinuities at the horizontal lines $\alpha_1 = \delta_1$, $\alpha_1 = 1/2$ and $\alpha_1 = \delta_2$. These are caused by the fact that the coefficient $C_\eta$ in Theorem \ref{theo:asnorm_blind_3} becomes a constant function of  $w_1$ at each of these values of $\alpha_1$ (where either the skewness or excess kurtosis vanishes). Consequently, there is no unique maximizer $w_1$ at these points and to emphasize their nature we have chosen to color them in Figure~\ref{fig:as_fig_5} with the corresponding extreme color (e.g., black for $\alpha_1 = 1/2$ where skewness carries no information). However, more puzzling are the differing limits when approaching the points $\delta_1, \delta_2$ from below and above. Essentially, when one approaches either of these points from inside the interval, the highest efficiency is obtained by focusing all weight on kurtosis which seems very counter-intuitive as in the limit kurtosis carries no information at all. This behaviour is visualized still in more detail in Figure~\ref{fig:as_fig_6} which plots the relative asymptotic efficiency $C_\eta^{-1}$ as a function of $w_1$ for $\tau = 5$ and $\alpha_1 - \delta_1 =: \varepsilon \in \{ 0.001, 0.002, 0.005, 0.010 \}$. Clearly, the weight achieving the maximal efficiency indeed approaches zero as $\varepsilon \rightarrow 0$. Algebraically, it is easy to see what is happening: For $\tau\to\infty$ and $\beta\approx 1/6$, the approximation of $C_\eta$, obtained by ignoring the terms of order $(1 - 6 \beta)^2$ and higher is  $C_\eta\approx  \frac{1}{1-4\beta}+\frac{8}{3}\frac{1-w_1}{w_1}\frac{1-6\beta}{\beta(1-4\beta)}$. For $\alpha_1\to \delta_1$ from the inside of the interval we have $1-6\beta<0$, which yields that $C_\eta$ is minimized for $w_1\to 0$. Similarly, for $\alpha_1\to \delta_1$ from the outside of the interval we have $1-6\beta>0$, which yields that $C_\eta\geq 0$ and shows that it is minimized for $w_1=1$. Also, as $\beta\to 1/6$, no matter from which side, $C_\eta$ converges to constant in $w_1$, implying that there is no discontinuity in the efficiency value itself. No such behaviour is observed for $\alpha_1\to 0.5$ and the reason for this is that $1-4\beta\geq 0$, implying that no sign change occurs when passing the critical value $\alpha_1 = 1/2$.


\begin{figure}[ht]
\centering
    \includegraphics[width=0.6\textwidth]{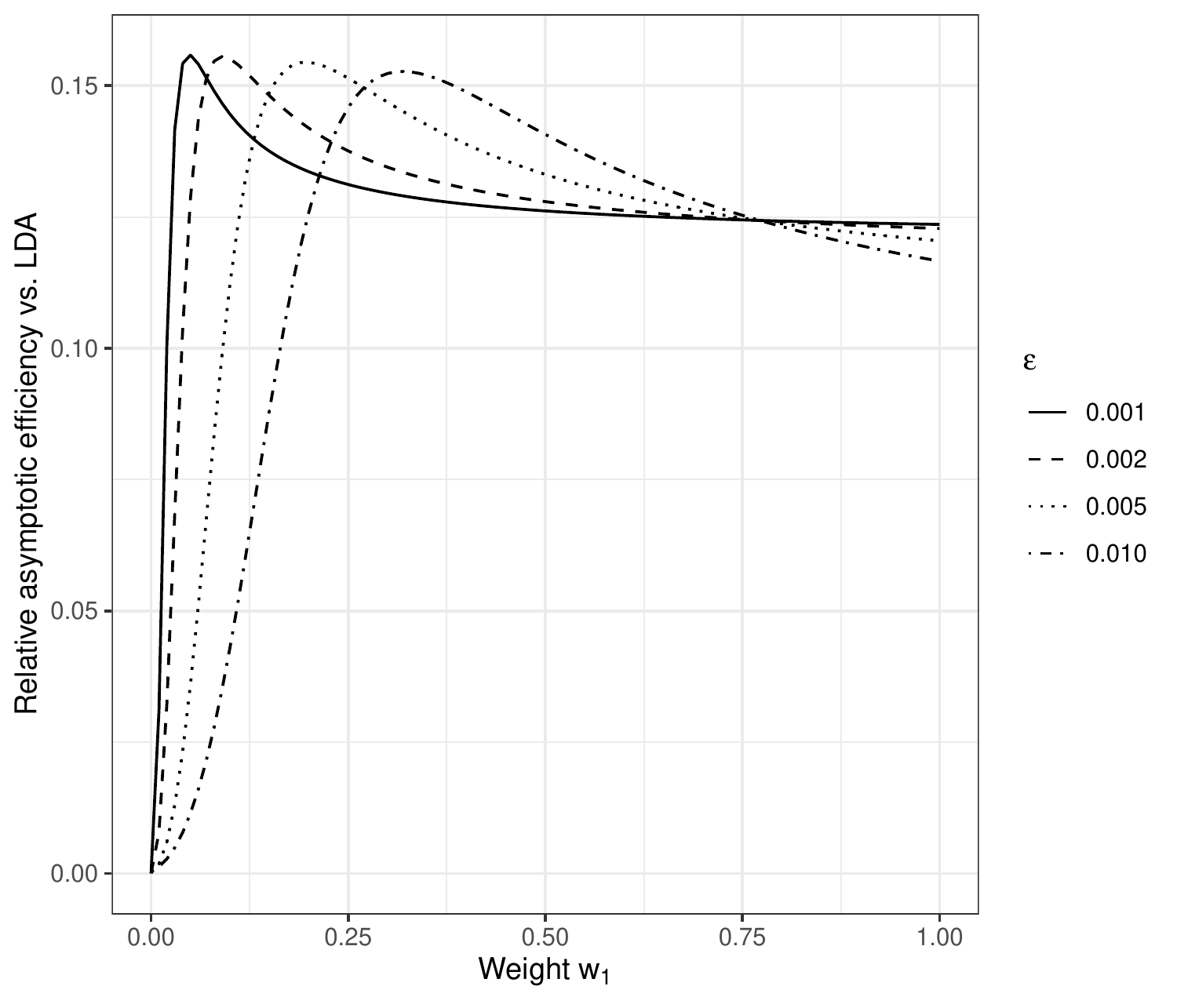}
    \caption{
    Relative asymptotic efficiencies of the hybrid estimator as a function of $w_1$ when $\tau = 5$ and $\alpha_1 = \delta_1 + \varepsilon$, where $\varepsilon \in \{ 0.001, 0.002, 0.005, 0.010 \}$. The value of the weight $w_1$ achieving the maximal efficiency can be seen to approach zero when $\varepsilon \rightarrow 0$.}
    \label{fig:as_fig_6}
\end{figure}

The efficiencies achieved by the optimal weighting are shown by the solid black line in Figure~\ref{fig:as_fig_5} and indicate that the hybrid estimator is able to reach satisfying levels of efficiency particularly when the mixing proportion lies in the interval $(\delta_1, \delta_2)$. Indeed, in the limit $\tau \rightarrow \infty$, within the interval there always exists a weighting that reaches efficiency equal to LDA, as evidenced by the right-most panel of Figure~\ref{fig:as_fig_5}. On the other hand, outside of the interval $(\delta_1, \delta_2)$, LDA is still, even in the limit $\tau \rightarrow \infty$, a superior choice. We conjecture that the reason for this critical difference in behaviour inside and outside of the interval is that when the value of $\alpha_1$ is extreme, one of the groups is small, making the pin-pointing of the optimal direction difficult in general, but even more so for the blind methods which have no class information available. However, it is not clear why the particular points $\delta_1, \delta_2$ serve as the cut-off values for this behavior.

Finally, note that the discontinuities make the use of the optimal choice of weighting somewhat difficult in practice, as, if one's prior information/guess on the value of the  mixing proportion $\alpha_1$ is even slightly off, relying on the seemingly optimal choice can in the worst case lead to relative efficiency close to zero. Moreover, recall that the previous experiments were asymptotical in nature and do not necessary reflect the behaviour of the method under sample sizes encountered in practical situations. Hence, we suggest using a ``safe'' universal value of $w_1$, most preferably falling in the interval $(0.725, 0.825)$ identified in conjunction with Figure~\ref{fig:as_fig_4}. For example, Figure~\ref{fig:as_fig_3} shows that the value $w_1 = 0.80$ delivers, for finite $\tau$, performance not far behind the optimal choice for any $\alpha_1$. However, if one is reasonably certain about the value of $\alpha_1$ (which, optimally, is far away from $\delta_1, \delta_2$) and has $n$ sufficiently large, resorting to the optimal choice is, of course, also possible.

\section{Simulations}\label{sec:simulations}

The three projection pursuit estimators considered here have been discussed in the context of ICA in detail in \citet{virta2016projection} where also fixed point algorithms for their computations are described. For our purpose here we can use their deflation-based algorithms when only one direction is to be extracted. Projection pursuit is considered notoriously prone to local optima and therefore it is known that good initial values for such algorithms are crucial. \citet{virta2016projection} suggest to use initial values based on a simple ICA method called FOBI \citep{Cardoso89sourceseparation}. This is also suitable in our context as the normal mixture is a sub-model of the ICA model, see Lemma \ref{lem:ic_mixture_intersection} in the supplementary material. The algorithms of \citet{virta2016projection} are implemented in the package ICtest \citep{ICtest}, which we will use in the following together with R 3.6.1 \citep{R}. Further details about software used are contained in the appendix.

Let $\textbf{u}_n$ be any of the estimators of $\boldsymbol{\theta}/\| \boldsymbol{\theta} \|$ discussed in Section \ref{sec:estimation}.
The accuracy of the estimator can in simulations be measured through the inner product $\textbf{u}_n'\boldsymbol{\theta}/\| \boldsymbol{\theta} \|$, which, by the Cauchy-Schwarz inequality, achieves the absolute value one if and only if the two vectors are parallel. In the continuation, we call the presented inner product ``Maximal similarity index'' (MSI). The following lemma presents the limiting distribution of this performance measure.

\begin{lemma}\label{lem:inner_product}
    Let the unit length vector $\textbf{u}_n$ satisfy $\sqrt{n}(s_n \textbf{u}_n - \boldsymbol{\theta}/\| \boldsymbol{\theta} \|) \rightsquigarrow \mathcal{N}_p(\textbf{0}, \boldsymbol{\Psi})$ for some sequence of signs $s_n \in \{ -1, 1 \}$ and limiting covariance matrix $\boldsymbol{\Psi}$. Then, as $n \rightarrow \infty$,
    \begin{align}\label{eq:inner_product_limiting}
        2 n ( 1 - s_n \textbf{u}_n' \boldsymbol{\theta}/\| \boldsymbol{\theta} \|) \rightsquigarrow \textbf{z}' \boldsymbol{\Psi} \textbf{z},
    \end{align}
    where the random vector $\textbf{z}$ obeys the $p$-variate standard normal distribution. Moreover, the expected value of the right-hand side of \eqref{eq:inner_product_limiting} is $\mathrm{tr}(\boldsymbol{\Psi})$.
\end{lemma}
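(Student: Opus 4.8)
The plan is to reduce the claim to the elementary polarization identity $\|\textbf{a} - \textbf{b}\|^2 = 2(1 - \textbf{a}'\textbf{b})$, valid for any $\textbf{a}, \textbf{b} \in \mathbb{S}^{p-1}$. Since $s_n \in \{-1,1\}$ and $\|\textbf{u}_n\| = 1$, the vector $s_n \textbf{u}_n$ lies on $\mathbb{S}^{p-1}$, as does $\boldsymbol{\theta}/\|\boldsymbol{\theta}\|$; applying the identity with $\textbf{a} = s_n \textbf{u}_n$ and $\textbf{b} = \boldsymbol{\theta}/\|\boldsymbol{\theta}\|$ and multiplying through by $n$ yields the exact, non-asymptotic equality
\[
    2 n \bigl( 1 - s_n \textbf{u}_n' \boldsymbol{\theta}/\|\boldsymbol{\theta}\| \bigr) = \bigl\| \sqrt{n}\,(s_n \textbf{u}_n - \boldsymbol{\theta}/\|\boldsymbol{\theta}\|) \bigr\|^2 .
\]
Thus the left-hand side of \eqref{eq:inner_product_limiting} is simply the squared Euclidean norm of the random vector appearing in the hypothesis.

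From here I would invoke the assumed convergence $\sqrt{n}\,(s_n \textbf{u}_n - \boldsymbol{\theta}/\|\boldsymbol{\theta}\|) \rightsquigarrow \textbf{w}$ with $\textbf{w} \sim \mathcal{N}_p(\textbf{0}, \boldsymbol{\Psi})$, together with the continuous mapping theorem applied to the map $\textbf{v} \mapsto \|\textbf{v}\|^2$, to conclude $2 n ( 1 - s_n \textbf{u}_n' \boldsymbol{\theta}/\|\boldsymbol{\theta}\| ) \rightsquigarrow \textbf{w}'\textbf{w}$. It then remains to rewrite $\textbf{w}'\textbf{w}$ in the stated form: representing $\textbf{w} = \boldsymbol{\Psi}^{1/2} \textbf{z}$ with $\boldsymbol{\Psi}^{1/2}$ the symmetric positive semidefinite square root of $\boldsymbol{\Psi}$ and $\textbf{z} \sim \mathcal{N}_p(\textbf{0}, \textbf{I}_p)$ — a representation available for any centered Gaussian, including the degenerate case in which $\boldsymbol{\Psi}$ is rank-deficient, which is exactly the situation here since all the covariance matrices in Theorems \ref{theo:asnorm_unblind}--\ref{theo:asnorm_blind_3} are sandwiched by the projection onto the orthogonal complement of $\boldsymbol{\theta}$ — we obtain $\textbf{w}'\textbf{w} = \textbf{z}' \boldsymbol{\Psi}^{1/2}\boldsymbol{\Psi}^{1/2} \textbf{z} = \textbf{z}' \boldsymbol{\Psi} \textbf{z}$, which is precisely \eqref{eq:inner_product_limiting}.

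Finally, for the moment statement I would compute directly $\mathrm{E}(\textbf{z}' \boldsymbol{\Psi} \textbf{z}) = \mathrm{E}\,\mathrm{tr}(\boldsymbol{\Psi} \textbf{z}\textbf{z}') = \mathrm{tr}\{ \boldsymbol{\Psi}\, \mathrm{E}(\textbf{z}\textbf{z}') \} = \mathrm{tr}(\boldsymbol{\Psi})$, using cyclicity of the trace, linearity of the trace and expectation, and $\mathrm{E}(\textbf{z}\textbf{z}') = \textbf{I}_p$; note that this concerns the limiting random variable only, so no uniform-integrability or convergence-of-moments argument is needed. Honestly, there is no genuinely hard step here: the lemma is essentially a bookkeeping consequence of the delta-method-type limit already in hand. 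The only points deserving a line of care are the observation that $s_n \textbf{u}_n$ and $\boldsymbol{\theta}/\|\boldsymbol{\theta}\|$ are both \emph{exactly} unit vectors, so that the polarization identity holds as an equality rather than merely asymptotically, and the remark that the singularity of $\boldsymbol{\Psi}$ poses no obstacle to the square-root representation.
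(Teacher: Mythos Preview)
Your proof is correct and follows essentially the same route as the paper: both arguments rest on the exact identity $\|\sqrt{n}(s_n \textbf{u}_n - \boldsymbol{\theta}/\|\boldsymbol{\theta}\|)\|^2 = 2n(1 - s_n \textbf{u}_n'\boldsymbol{\theta}/\|\boldsymbol{\theta}\|)$ for unit vectors, then pass to the limit via continuous mapping and compute the expectation by the trace trick. Your write-up is simply more explicit about naming the continuous mapping theorem and the square-root representation $\textbf{w} = \boldsymbol{\Psi}^{1/2}\textbf{z}$, whereas the paper compresses these into a single sentence.
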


Note that the sign correction in Lemma \ref{lem:inner_product} can be incorporated in practice by choosing the sign $s_n$ such that the quantity $s_n \textbf{u}_n' \boldsymbol{\theta}/\| \boldsymbol{\theta} \|$ is positive. In the simulations we will evaluate the performances of methods through the left-hand side of \eqref{eq:inner_product_limiting}. By Lemma \ref{lem:inner_product} the average of this criterion over several replicates should be close to the trace of the limiting covariance matrix of the corresponding estimator, for sample size $n$ large enough. Hence, the simulations also serve to ``verify'' our asymptotic results.

In the following simulations four projection pursuit (PP) directions have been calculated: kurtosis based (obtained by maximization of $(\kappa_n-3)^2$), skewness based (obtained by maximization of $\gamma_n^2$), ``safe'' hybrid estimator (obtained by maximization of $\eta_n$ for $w_1=0.8$) and ``optimal'' hybrid estimator (obtained by maximization of $\eta_n$ for $w_1=w_1(\alpha_1,\tau)$ which maximizes the relative asymptotic efficiency of the hybrid estimator w.r.t. LDA). Sign $s_n$ is chosen such that $s_n\textbf{u}_n'\boldsymbol\theta/||\boldsymbol\theta||\geq 0$. 
As discussed, the performances of the four presented PP directions $\textbf{u}_n$ are evaluated using the maximal similarity index $\textbf{u}_n'\boldsymbol\theta/||\boldsymbol\theta||$ from above.\\

For the first simulation setting, the means of maximal similarity indices $s_n\textbf{u}_n'\boldsymbol\theta/||\boldsymbol\theta||$ in the simulations are obtained using $1000$ random samples in each setting. In each $(\tau, \alpha_1, n)$-setting, where the Mahalanobis distance between the group means $\tau=1,2,\dots,20$, mixing proportion $\alpha_1=0.05,0.1,\dots,0.45,0.5$ and sample size $n=500,1000,2000,4000,8000,$ $16000,32000$, $m=1000$ random samples are generated from a $10$-dimensional normal mixture $\alpha_1 \mathcal N_{10}(\textbf{0},\boldsymbol\Sigma)+(1-\alpha_1)\mathcal N_{10}(\boldsymbol\mu_2,\boldsymbol\Sigma)$, where $\boldsymbol\Sigma$ is a covariance matrix with autoregression AR(1) structure with $\rho=0.6$, and $\boldsymbol \mu$ is in each setting chosen randomly such that $\boldsymbol\mu'\boldsymbol\Sigma^{-1}\boldsymbol\mu=\tau$.

The heatmaps of the MSI-values in Figure~\ref{fig:as_fig_7} show that for moderate sample sizes $(n\geq 2000)$, both hybrid estimators estimate the optimal LDA direction very well. It is also visible that kurtosis and skewness based PP directions perform very badly when $\alpha_1$ is near their corresponding discontinuity points. The hybrid estimators suffer from the same problem when $\alpha_1$ is near $1/2-1/\sqrt{12}$. The hybrid estimator with optimal $w_1$ is performing worse when $\alpha_1$ is approaching $1/2-1/\sqrt{12}$ from the inside the interval  $(1/2-1/\sqrt{12},1/2+1/\sqrt{12})$. 
 It is important to recall, that the criterion for choosing the optimal weight $w_1$ is an asymptotic one and thus might not perform well in small sample settings. Furthermore, in order to calculate the optimal weight $w_1$ for the hybrid estimator, one needs to know both the Mahalanobis distance $\tau$ between the group means and the mixing proportion $\alpha_1$, which is a rather unrealistic requirement in practice. Luckily, the hybrid estimator with the ``safe'' weight $w_1=0.8$ shows a very good performance in this simulation study, and is therefore recommended in cases where knowledge of $\alpha_1$ and $\tau$ is lacking.
Another observation based on this simulation is that for small sample sizes skewness based PP seems to be preferable. This might be due to the fact that moments of order three are easier to estimate than moments of order four.


\begin{figure}[ht]
\centering
    \includegraphics[width=0.95\textwidth]{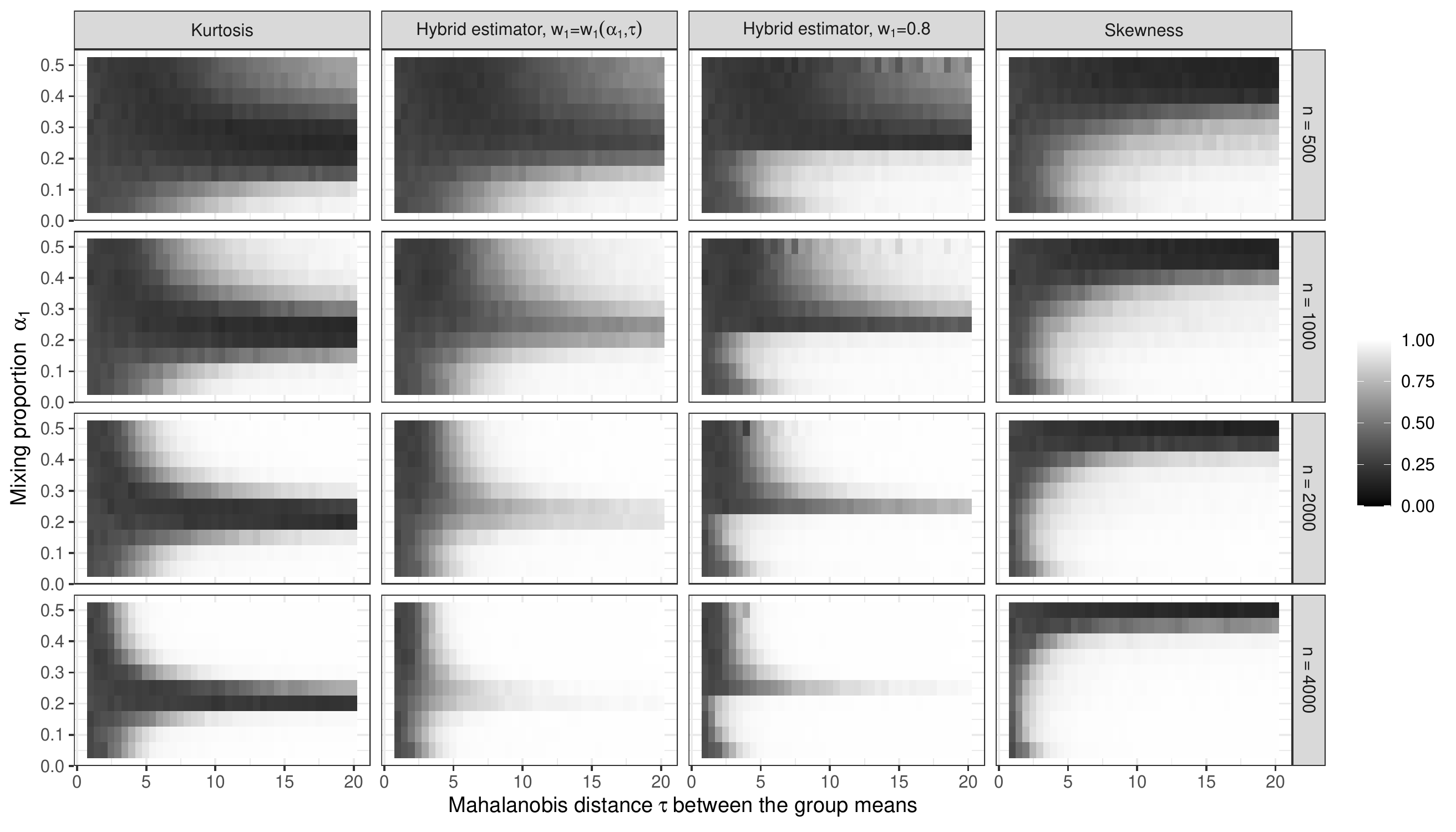}\\
    \caption{
    Average values of the MSI  $s_n\textbf{u}_n'\boldsymbol\theta/||\boldsymbol\theta||$ as a function of Mahalanobis distance $\tau$ between the group means and mixing proportion $\alpha_1$, where $\textbf{u}_n$ is one of the four estimators discussed above. 
    }
    \label{fig:as_fig_7}
\end{figure}

The corresponding heatmap of the standard deviation of MSI can be found in the Appendix, Figure~\ref{fig:as_fig_10}, and shows that for sample size and distance between the group means moderately large, deviation of the MSI to the to the corresponding mean, which is very close to the optimal value of $1$, is negligible, for most of the values of the mixing proportion $\alpha_1$. Heatmaps of mean and standard deviation of the MSI in Figures~\ref{fig:as_fig_14} and \ref{fig:as_fig_15} show that for large sample sizes $(n=8000,16000,32000)$ MSI is virtually $1$.

\begin{figure}[ht]
\centering
    \includegraphics[width=0.95\textwidth]{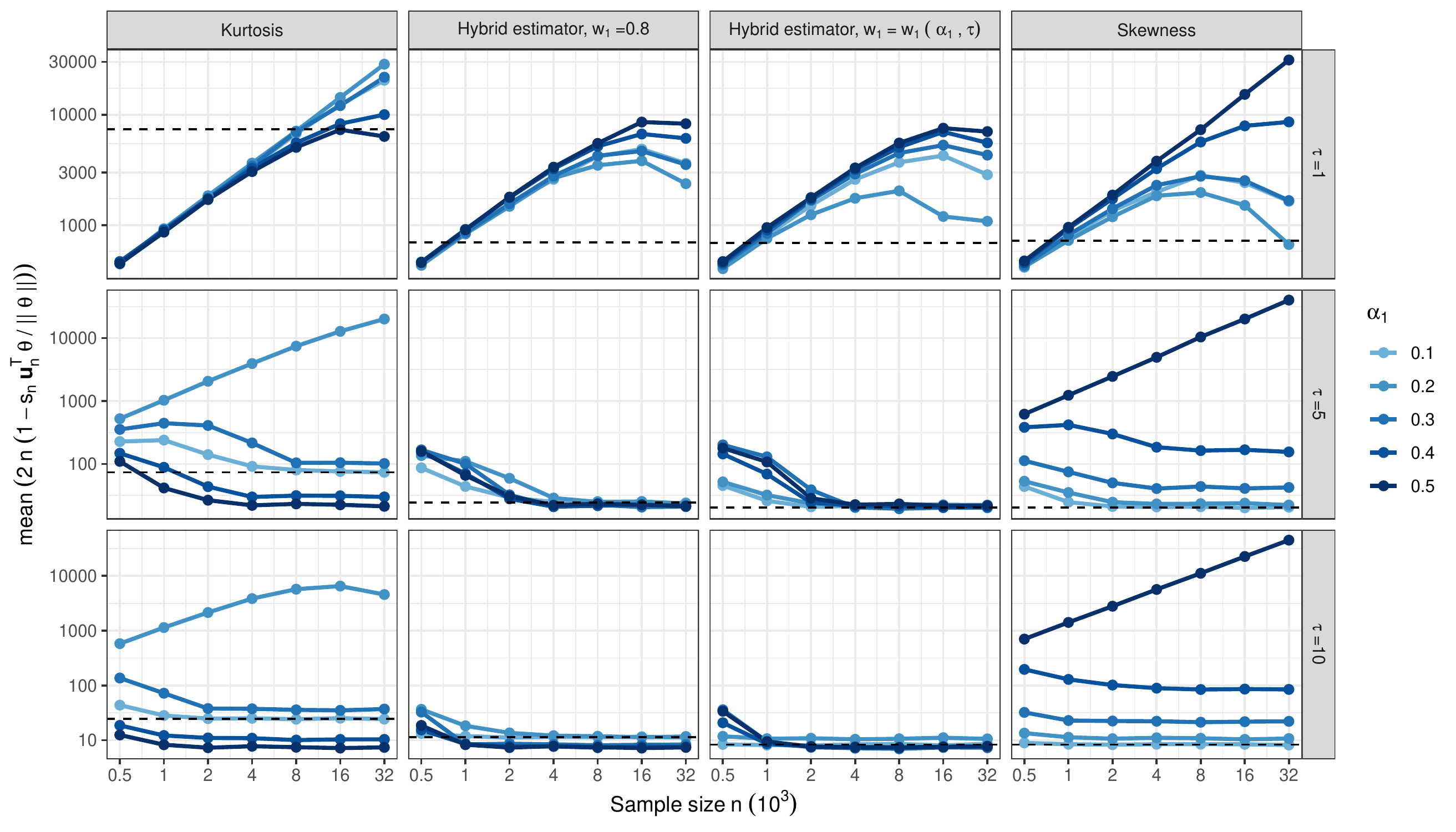}\\
    \caption{Average values of the MSI $s_n\textbf{u}_n'\boldsymbol\theta/||\boldsymbol\theta||$ as a function of $n$, for mixing proportion   $\alpha_1\in\{0.1,0.2,0.3,0.4,0.5\}$ and Mahalanobis distance between the group means  $\tau\in\{1,5,10\}$, where $\textbf{u}_n$ is one of the four estimators discussed above. 
    }
    \label{fig:as_fig_8}
\end{figure}

In the next simulation the theoretic results are to be confirmed by exploiting the results of Lemma~\ref{lem:inner_product}. For that purpose we select three values for $\tau \in \{1,5,10\}$, to represent hardly, moderately and clearly separated clusters, respectively. Then we simulate, for sample sizes $n=500,1000,2000,4000,8000,16000,32000$, from a three-variate Gaussian mixture model as specified above with $\boldsymbol\Sigma=\textbf{I}_3+\textbf{1}_3$, where $\textbf{1}_3$ is matrix of ones, $\boldsymbol\mu_1=\textbf 0$ and $\boldsymbol\mu_2=\boldsymbol\mu_2(\tau)$ is for each $\tau$ chosen such that Mahalanobis distance between the means is equal to $\tau$, i.e., $\boldsymbol\mu_2(1)=(0.68,-0.55, 0.6)',\, \boldsymbol\mu_2(5)=(0.81,-2.24, -0.36)',\,\boldsymbol\mu_2(10)=(3.06,1.6,-1.11)'$. For $\alpha_1=0.1,0.2,\dots,0.5$ we compute then for sample sizes $n=500,1000,2000,4000,8000,$ $16000,32000$ the means of the $2n(1-s_n \boldsymbol{u}^\top_n)\boldsymbol\theta/\|\boldsymbol\theta\|$ based on 2000 repetitions. The question is then whether those averages for the presented methods stabilize for the cases they are expected to work. In order not to clutter the figure we computed only for $\alpha_1=0.1$ trace $\mathrm{tr}(\boldsymbol{\Psi})$, for the corresponding matrix $\boldsymbol\Psi$. Figure~\ref{fig:as_fig_8} shows the results of this simulation and confirms the corresponding theoretic findings from above. The less separated the clusters are the more difficult the estimation and even for $n=32000$ observations there is no stabilization visible. But the more separated the two clusters are, the faster the stabilization. Also the closer $\alpha_1$ is to the critical value $1/2-1/\sqrt{12}$, the worse is kurtosis based PP. Skewness based PP similarly is better the more skewed the distribution and clearly does not work in the symmetric case. Both hybrid estimators show an excellent performance in this setting. It is also clearly visible that the empirical lines for the mixing proportion $\alpha_1=0.1$  correspond to the theoretically computed dashed lines given that the groups are separated enough and we assume for $\tau=1$ the line would be reached for much larger sample sizes. Though we show the theoretic lines only for one mixing proportion the behaviour is similar for all others naturally with the exception of skewness not working in the symmetric case.\\

Principal component analysis (PCA) can also be seen as a projection method where the variance is maximized. PCA is arguably the most popular dimension reduction method and often used before clustering. While skewness and kurtosis can be related to mixtures the variance does not have the same connection with the discriminant direction as the other cumulants. A theoretic consideration when PCA can be used to estimate the discriminant is in Appendix~\ref{sec:PCA}. Here we show an example where PCA fails.


\begin{figure}[ht]
\centering
    \includegraphics[width=0.7\textwidth]{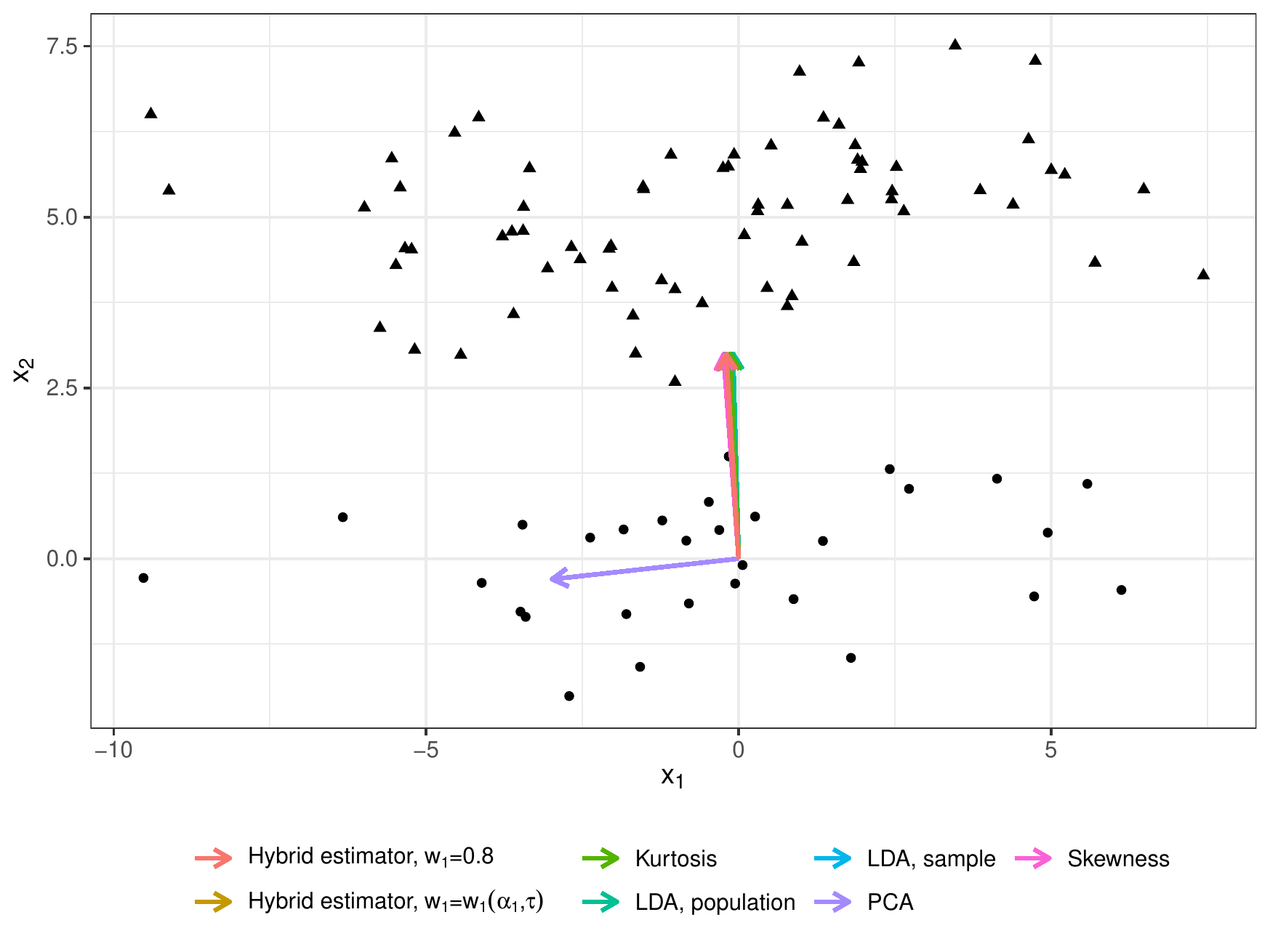}
    \caption{PP direction $\textbf{u}_n$ based on LDA (here denoted as ``LDA, sample''), PCA, and one of the four estimators discussed above. 
    }
    \label{fig:as_fig_9}
\end{figure}

Figure~\ref{fig:as_fig_9} visualizes a sample of size $n=100$ from our Gaussian mixture model  with $$
\boldsymbol\mu_1=(0,0)',\quad\boldsymbol\mu_2=(0,5)',\quad\boldsymbol\Sigma=\begin{pmatrix}10&0.3\\
    0.3&1\end{pmatrix}$$
    and mixing proportion $\alpha_1 = 0.3$. The figure contains then the  direction $\boldsymbol\theta/\|\boldsymbol\theta\|$ of the population LDA as well as its estimate together with our four PP methods considered in this section and the direction of the first principal component. As it is clearly visible here, there is no big difference between the methods except for PCA which points in a direction which contains no information for the separation of the two groups.

\section{Real data example}\label{sec:example}

To compare the hybrid estimator to PCA in a real data set we consider the finance data set available in the R package Rmixmod \citep{Rmixmod}, which consists of 889 records of companies where, based on four numeric summary statistics, it should be decided if the company is financially healthy or not, where the information is provided in the data set. The scatter plot matrix is given in the Appendix as Figure~\ref{fig:as_fig_12} and shows no clear clusters. As a reference we compute for the data set LDA and then compare this supervised estimate via the estimate $s_n\textbf{u}_n\boldsymbol\theta_n/\|\boldsymbol\theta_n\|$ of the MSI  to our hybrid estimator for different weights and to PCA. Figure~\ref{fig:as_fig_11} shows obtained MSI values for the discussed estimators. The figure clearly shows that as long as enough weight is given to kurtosis, the hybrid estimator based PP clearly outperforms PCA, while the performance is poor if skewness gets too much weight. This is not surprising as the amount of healthy (457) and bankrupt (432) companies is almost equal. The weight of 0.8 gives again a good performance. Nevertheless, even though for most values of $w_1$, and especially for suggested $w_1\in[0.7,0.8]$, hybrid estimators clearly outperform PCA,  achieved MSI values of around $0.5$ are not that good. Such performance can be explained with the low sample size and that the cluster centers are not that far apart, as for example is visualized in the Appendix in Figure~\ref{fig:as_fig_13}.

\begin{figure}[ht]
\centering
    \includegraphics[width=0.7\textwidth]{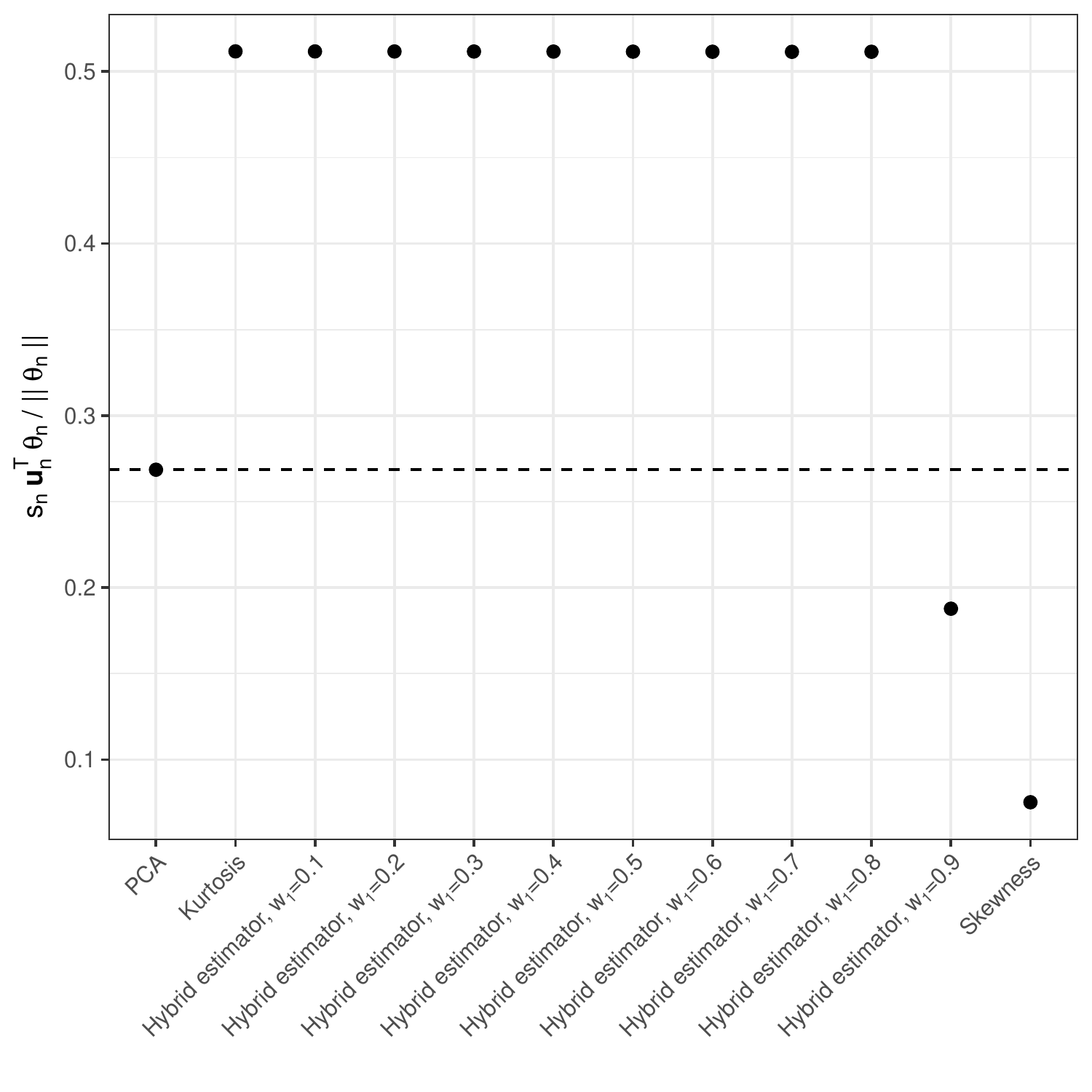}
    \caption{MSI $s_n\textbf{u}_n \boldsymbol\theta_n/\|\boldsymbol \theta_n\|$ for the finance data, where $\textbf{u}_n$ is the direction based on PCA and PP estimators for $w_1=0,0.1,0.2,\dots,0.9,1$. $\boldsymbol{\theta_n}$ is the direction based on LDA.}
    \label{fig:as_fig_11}
\end{figure}

\section{Discussion}\label{sec:discussion}

In this paper, we conducted an asymptotic comparison of two popular estimators of the linear discriminant direction, LDA and projection pursuit based on skewness and kurtosis. For the latter, we proposed using the weighted combination of kurtosis and skewness as the projection index (giving the individual cumulants as special cases). Both the theoretical results and simulations indicate that, with a suitable choice of weighting, such projection pursuit achieves reasonably good performance compared to LDA (e.g., around 15\% relative asymptotic efficiency if the Mahalanobis distance between the groups is 5, see Figure~\ref{fig:as_fig_3}), considering it operates in complete absence of any label information. Moreover, in the extreme case of balanced and infinitely well-separated groups, projection pursuit is able to reach asymptotic efficiency equal to LDA with an optimal choice of weighting.

The use of our optimal weighting results is  difficult in practice by the discontinuities around the mixing proportions $\delta_1, \delta_2$ observed in Section \ref{sec:combination}, see Figure~\ref{fig:as_fig_5}. As such, unless one is absolutely sure that the mixing proportion is not in these regions, our  recommendation is to use a universal choice of weighting, anything between $0.7$ and $0.8$ (as the weight for skewness) likely being a good choice.

At first we thought that the discontinuities, and the surprising recommendation to favor kurtosis just outside the interval $(\delta_1, \delta_2)$, might be caused by the uneven robustness properties of skewness and kurtosis in the objective function. Namely, being based on fourth moments, kurtosis is more affected by outliers than skewness (despite the standardization with second moments). Hence, we also considered using the ```balanced'' objective function,
\begin{align*}
    \eta^*(\textbf{u}) := w_1 \gamma(\textbf{u})^{8/3} + w_2 \{ \kappa(\textbf{u}) - 3 \}^2,
\end{align*}
in an attempt to put skewness and kurtosis on an equal footing. However, the asymptotic properties of $\eta^*$ (not shown here) turn out to be essentially the same as for $\eta$, including the discontinuities which are also observed for it. Note also that the discontinuous behavior was observed also in \cite{virta2016projection}, where the normal mixture model was studied using independent component analysis.

Similarly one could extend our considerations here to many other PP indices as well, which often are modifications of skewness or kurtosis (see e.g. \citet{HouWentzell2014}) or otherwise motivated to be useful in clustering or structure detection, see for example \citet{CookBujaCabrera1993,fischer2019repplab} and references therein for alternative indices. These indices are however often computationally expensive and therefore much less popular than skewness and kurtosis.

Finally, besides projection pursuit, there exist also other blind estimators of the linear discriminant. For example, it is known that the linear discriminant can be reconstructed using invariant coordinate selection (ICS) \citep{TylerCritchleyDuembgenOja2009} where two scatter matrices are jointly diagonalized.  Especially when using the regular covariance matrix and the scatter matrix of fourth moments in this context as, for example, suggested in \citep{AlashwaliKent2016,pena2010eigenvectors}, would allow a theoretic comparison. Actually this combination corresponds to FOBI mentioned in Section~\ref{sec:simulations}. Another prospective line of work is the extension of our asymptotic results to mixtures of elliptical distributions, as \cite{pena2001cluster} indeed showed that projection pursuit yields a Fisher consistent estimator of the linear discriminant also in the case of general elliptical families. Similarly, another possible extension is to the case of multiple groups instead of just two or to groups with unequal covariance matrices.
\newpage
\appendix

\section{Equivalent results for PCA}\label{sec:PCA}

While PCA manages to capture the linear discriminant direction only under very specific conditions, and cannot thus be reasonably seen as a ``blind'' estimator of it, we still give for it in the following, for completeness, equivalent results to the ones in Sections \ref{sec:estimation} and \ref{sec:combination}. The first result, detailing conditions required for the Fisher consistency of PCA, is \textit{qualitatively} well-known in the literature (see, e.g., Section 9.1 in \cite{jolliffe2002principal}), but, as far as we know, the exact eigenvalue bound is novel.

\begin{lemma}\label{lem:PCA_fisher}
    Given model \eqref{eq:xy_model}, the following two are equivalent:
    \begin{enumerate}
        \item[i)] The vector $ \textbf{h} := \boldsymbol{\mu}_2 - \boldsymbol{\mu}_1 $ is an eigenvector of $\boldsymbol{\Sigma}$ and, denoting the corresponding eigenvalue with $\phi$, the second-to-largest eigenvalue $\phi_2\{ \mathrm{Cov}(\textbf{x}) \}$ of $\mathrm{Cov}(\textbf{x})$ satisfies
    \begin{align*}
        \phi_2\{ \mathrm{Cov}(\textbf{x}) \} < \phi ( 1 + \beta \tau),
    \end{align*}
    where $\beta$ and $\tau$ are as in Theorem \ref{theo:asnorm_blind}.
    \item[ii)] The unique leading unit length eigenvectors of $\mathrm{Cov}(\textbf{x})$ are $\pm \boldsymbol{\theta}/\| \boldsymbol{\theta} \|$.
    \end{enumerate}
\end{lemma}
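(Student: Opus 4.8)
The plan is to compute $\mathrm{Cov}(\textbf{x})$ explicitly under model \eqref{eq:xy_model} and then apply the spectral theory of rank-one perturbations. By the law of total covariance, since the conditional covariance is the constant matrix $\boldsymbol{\Sigma}$ and the conditional mean is $y\boldsymbol{\mu}_1 + (1-y)\boldsymbol{\mu}_2$, we obtain
\begin{align*}
    \mathrm{Cov}(\textbf{x}) = \boldsymbol{\Sigma} + \beta\, \textbf{h}\textbf{h}',
\end{align*}
where $\textbf{h} = \boldsymbol{\mu}_2 - \boldsymbol{\mu}_1$ and $\beta = \alpha_1\alpha_2 = \mathrm{Var}(y)$. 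Thus $\mathrm{Cov}(\textbf{x})$ is a rank-one update of $\boldsymbol{\Sigma}$ in the direction $\textbf{h}$. The first step is to establish the implication ii) $\Rightarrow$ i): if $\pm\boldsymbol{\theta}/\|\boldsymbol{\theta}\|$ are the unique leading eigenvectors of $\mathrm{Cov}(\textbf{x})$, I would show $\textbf{h}$ must be an eigenvector of $\boldsymbol{\Sigma}$. Since $\boldsymbol{\theta} = \boldsymbol{\Sigma}^{-1}\textbf{h}$, the vector $\textbf{h}$ is proportional to $\boldsymbol{\Sigma}\boldsymbol{\theta}$; feeding $\boldsymbol{\theta}$ into the eigenvalue equation $(\boldsymbol{\Sigma} + \beta\textbf{h}\textbf{h}')\boldsymbol{\theta} = \lambda\boldsymbol{\theta}$ and using $\textbf{h}'\boldsymbol{\theta} = \boldsymbol{\theta}'\boldsymbol{\Sigma}\boldsymbol{\theta} > 0$ gives $\boldsymbol{\Sigma}\boldsymbol{\theta} = (\lambda - \beta\,\textbf{h}'\boldsymbol{\theta})\boldsymbol{\theta}$, so $\boldsymbol{\theta}$ — hence $\textbf{h} = \boldsymbol{\Sigma}\boldsymbol{\theta}\cdot(\text{scalar})$ — is an eigenvector of $\boldsymbol{\Sigma}$ with some eigenvalue $\phi$, and then $\textbf{h}$ is proportional to $\boldsymbol{\theta}$ as well. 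Conversely, for i) $\Rightarrow$ ii), assuming $\boldsymbol{\Sigma}\textbf{h} = \phi\textbf{h}$, I would note $\boldsymbol{\theta} = \phi^{-1}\textbf{h}$, so $\textbf{h}$ and $\boldsymbol{\theta}$ span the same line, and $(\boldsymbol{\Sigma} + \beta\textbf{h}\textbf{h}')\textbf{h} = \phi(1 + \beta\tau')\textbf{h}$ where $\tau' = \|\textbf{h}\|^2 = \phi\tau$ (using $\tau = \textbf{h}'\boldsymbol{\Sigma}^{-1}\textbf{h} = \phi^{-1}\|\textbf{h}\|^2$), giving the eigenvalue $\phi(1 + \beta\tau)$ in that direction.

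The second, quantitative step is to pin down exactly when this eigenvalue $\phi(1+\beta\tau)$ is strictly the largest. On the orthogonal complement of $\textbf{h}$, the perturbation $\beta\textbf{h}\textbf{h}'$ acts as zero, so $\mathrm{Cov}(\textbf{x})$ restricted to $\textbf{h}^\perp$ coincides with $\boldsymbol{\Sigma}$ restricted there (note $\textbf{h}^\perp$ is $\boldsymbol{\Sigma}$-invariant precisely because $\textbf{h}$ is an eigenvector). Hence the eigenvalues of $\mathrm{Cov}(\textbf{x})$ are $\phi(1+\beta\tau)$ together with the eigenvalues of $\boldsymbol{\Sigma}$ on $\textbf{h}^\perp$. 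The direction $\pm\boldsymbol{\theta}/\|\boldsymbol{\theta}\|$ is the unique leading eigenvector iff $\phi(1+\beta\tau)$ strictly exceeds every eigenvalue of $\boldsymbol{\Sigma}|_{\textbf{h}^\perp}$, which is exactly the condition $\phi_2\{\mathrm{Cov}(\textbf{x})\} < \phi(1+\beta\tau)$ once one checks that $\phi_2\{\mathrm{Cov}(\textbf{x})\}$ equals the top eigenvalue of $\boldsymbol{\Sigma}|_{\textbf{h}^\perp}$ when i) holds — this identification is immediate from the eigenvalue list just described, whether or not $\phi(1+\beta\tau)$ happens to be the largest. I should be slightly careful about the degenerate possibility that $\phi(1+\beta\tau)$ ties with an eigenvalue on $\textbf{h}^\perp$: strict inequality is precisely what rules this out and guarantees uniqueness (up to sign).

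The main obstacle is a bookkeeping subtlety rather than a deep one: I must argue that $\phi_2\{\mathrm{Cov}(\textbf{x})\}$ — defined as the second-largest eigenvalue of $\mathrm{Cov}(\textbf{x})$ regardless of which condition holds — correctly captures "the largest eigenvalue of $\boldsymbol{\Sigma}$ on $\textbf{h}^\perp$" in the setting of condition i). When i) holds, the spectrum of $\mathrm{Cov}(\textbf{x})$ is $\{\phi(1+\beta\tau)\} \cup \mathrm{spec}(\boldsymbol{\Sigma}|_{\textbf{h}^\perp})$, so the second-largest is either $\phi(1+\beta\tau)$ (if it is not the strict maximum) or $\max\mathrm{spec}(\boldsymbol{\Sigma}|_{\textbf{h}^\perp})$ (if it is); in both sub-cases the inequality $\phi_2 < \phi(1+\beta\tau)$ is equivalent to $\phi(1+\beta\tau)$ being the strict maximum, which is equivalent to ii). For the direction i) $\Rightarrow$ ii) this chain gives the result directly; for ii) $\Rightarrow$ i), having already derived that $\textbf{h}$ is an eigenvector in the first step, the same spectral decomposition applies and ii) forces the strict inequality. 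I expect the whole argument to be short, with the only real content being the rank-one covariance identity and the invariance of $\textbf{h}^\perp$ under $\boldsymbol{\Sigma}$.
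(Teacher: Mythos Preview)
Your approach is essentially the same as the paper's: compute $\mathrm{Cov}(\textbf{x}) = \boldsymbol{\Sigma} + \beta\textbf{h}\textbf{h}'$, verify in each direction that the eigenvalue equation for $\boldsymbol{\theta}$ forces $\textbf{h}$ to be an eigenvector of $\boldsymbol{\Sigma}$ with associated $\mathrm{Cov}(\textbf{x})$-eigenvalue $\phi(1+\beta\tau)$, and then read off the strict-inequality condition. Your extra discussion of the spectrum on $\textbf{h}^\perp$ is more explicit than the paper's version but amounts to the same content.

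One small slip to fix in the ii) $\Rightarrow$ i) step: from $(\boldsymbol{\Sigma}+\beta\textbf{h}\textbf{h}')\boldsymbol{\theta}=\lambda\boldsymbol{\theta}$ you cannot directly write $\boldsymbol{\Sigma}\boldsymbol{\theta}=(\lambda-\beta\,\textbf{h}'\boldsymbol{\theta})\boldsymbol{\theta}$, since that already assumes $\textbf{h}\propto\boldsymbol{\theta}$. The clean route (which the paper uses) is to recall $\boldsymbol{\Sigma}\boldsymbol{\theta}=\textbf{h}$ by definition, so the eigenvalue equation becomes $(1+\beta\tau)\textbf{h}=\lambda\boldsymbol{\theta}$; this gives $\textbf{h}\propto\boldsymbol{\theta}$ and hence $\boldsymbol{\Sigma}\textbf{h}=\frac{\lambda}{1+\beta\tau}\textbf{h}$, identifying $\phi=\lambda/(1+\beta\tau)$ rather than $\lambda-\beta\tau$.
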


Lemma \ref{lem:PCA_fisher} states that for the first PC to recover the discriminant direction, it is necessary that the difference $\boldsymbol{\mu}_2 - \boldsymbol{\mu}_1$ between the group means is an eigenvector of $\boldsymbol{\Sigma}$. However, it is not necessary for it to be the leading eigenvector but instead, roughly, the more well-separated the groups are (large Mahalanobis distance $\tau$) and the more balanced the groups are (large $\beta$), the smaller the corresponding eigenvalue can be relative to the rest of the spectrum. Note also that in the spherical case, $\boldsymbol{\Sigma} \propto \textbf{I}_p$, the first part of condition i) in Lemma \ref{lem:PCA_fisher} is trivially satisfied.

Asymptotic results for PCA are also well-known, see, e.g., \cite{anderson1963asymptotic,davis1977asymptotic}, and the following theorem details the strong consistency and the limiting normality of the first PC in our particular scenario. For completeness, we provide a proof.

\begin{theorem}\label{theo:asnorm_PCA}
    Given model \eqref{eq:xy_model}, assume that the condition i) (or, equivalently, ii)) holds and let $\textbf{u}_n$ be any sequence of leading unit-length eigenvectors of the sample covariance matrix $\textbf{C}_n$ of $\textbf{x}_1, \ldots , \textbf{x}_n$. Then, there exists a sequence of signs $s_n \in \{ -1 , 1 \}$ such that, as $n \rightarrow \infty$,
    \begin{itemize}
        \item[1)] $s_n \textbf{u}_n \rightarrow \boldsymbol{\theta}/\| \boldsymbol{\theta} \|$, almost surely.
        \item[2)] $ \sqrt{n}(s_n \textbf{u}_n - \boldsymbol{\theta}/\| \boldsymbol{\theta} \|) \rightsquigarrow \mathcal{N}_p(\textbf{0}, \boldsymbol{\Psi}_{\mathrm{PCA}}) $, where
        \begin{align*}
            \boldsymbol{\Psi}_{\mathrm{PCA}} := \left( \frac{1 + \beta \tau}{\| \boldsymbol{\theta} \|^2} \right) \textbf{M}^\dagger \left[ \tau (\boldsymbol{\Sigma} - \tau^{-1} \textbf{h} \textbf{h}') + (1 + \beta \tau) \{ \kappa(\boldsymbol{\theta}) - 1 \} \textbf{h} \textbf{h}' \right]  \textbf{M}^\dagger,
        \end{align*}
        where $\textbf{M} := \boldsymbol{\Sigma} + \beta \textbf{h} \textbf{h}' - \lambda_1 \textbf{I}_p$, $\lambda_1$ is the eigenvalue of $\mathrm{Cov}(\textbf{x})$ corresponding to the eigenvector $ \boldsymbol{\theta}/\| \boldsymbol{\theta} \| $, $\textbf{M}^{\dagger}$ denotes the Moore-Penrose pseudoinverse of $\textbf{M}$ and $\kappa (\boldsymbol{\theta})$ is the kurtosis of $\textbf{x}$ in the direction $\boldsymbol{\theta}$.
    \end{itemize}
\end{theorem}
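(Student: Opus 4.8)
The plan is to combine the classical eigenvector perturbation / delta-method argument (as in, e.g., \cite{anderson1963asymptotic,davis1977asymptotic}) with a moment computation specific to the normal mixture. Write $\textbf{h} := \boldsymbol{\mu}_2 - \boldsymbol{\mu}_1$, $\boldsymbol{\mu} := \mathrm{E}(\textbf{x})$ and $\textbf{v} := \boldsymbol{\theta}/\| \boldsymbol{\theta} \|$. First I would record the relevant population facts: $\mathrm{Cov}(\textbf{x}) = \boldsymbol{\Sigma} + \beta \textbf{h}\textbf{h}'$, and, under condition i) of Lemma~\ref{lem:PCA_fisher}, $\textbf{h}$ (hence $\boldsymbol{\theta} = \boldsymbol{\Sigma}^{-1}\textbf{h}$ and $\textbf{v}$) is an eigenvector of both $\boldsymbol{\Sigma}$, with eigenvalue $\phi$, and of $\mathrm{Cov}(\textbf{x})$, with eigenvalue $\lambda_1 = \phi(1 + \beta \tau)$, which is moreover the strictly largest, hence simple, eigenvalue of $\mathrm{Cov}(\textbf{x})$. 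Claim 1) then follows from $\textbf{C}_n \to \mathrm{Cov}(\textbf{x})$ almost surely (strong law of large numbers) together with the continuity of the rank-one spectral projection attached to a simple leading eigenvalue: $\textbf{u}_n \textbf{u}_n' \to \textbf{v}\textbf{v}'$ a.s., which yields $s_n \textbf{u}_n \to \textbf{v}$ a.s. for a suitable (measurable) sign sequence $s_n$, and which also makes the leading unit eigenvector of $\textbf{C}_n$ eventually unique up to sign.

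For claim 2), the normal mixture has finite moments of all orders, so the multivariate CLT gives $\sqrt{n}\{ \textbf{C}_n - \mathrm{Cov}(\textbf{x}) \} \rightsquigarrow \textbf{G}$ for a symmetric Gaussian random matrix $\textbf{G}$. Since $\lambda_1$ is simple, the map sending a symmetric matrix near $\mathrm{Cov}(\textbf{x})$ to its sign-normalized unit leading eigenvector is differentiable there with derivative $\textbf{E} \mapsto - \textbf{M}^{\dagger} \textbf{E} \textbf{v}$ (on symmetric $\textbf{E}$), where $\textbf{M} = \mathrm{Cov}(\textbf{x}) - \lambda_1 \textbf{I}_p$; using part 1) to select the correct branch, the delta method gives
\begin{align*}
    \sqrt{n}(s_n \textbf{u}_n - \textbf{v}) \rightsquigarrow - \textbf{M}^{\dagger} \textbf{G} \textbf{v} \sim \mathcal{N}_p(\textbf{0},\, \textbf{M}^{\dagger} \mathrm{Cov}(\textbf{G} \textbf{v}) \textbf{M}^{\dagger}).
\end{align*}
Expanding $\textbf{C}_n \textbf{v} = n^{-1} \sum_{i=1}^n (\textbf{x}_i - \bar{\textbf{x}}) \{ (\textbf{x}_i - \bar{\textbf{x}})' \textbf{v} \}$ around $\boldsymbol{\mu}$ shows the centering corrections are $o_P(n^{-1/2})$, so that $\mathrm{Cov}(\textbf{G} \textbf{v})$ equals the covariance matrix of the random vector $(\textbf{x} - \boldsymbol{\mu}) \{ \textbf{v}'(\textbf{x} - \boldsymbol{\mu}) \}$.

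It remains to evaluate this covariance. Set $\textbf{z} := \textbf{x} - \boldsymbol{\mu}$, $t := \textbf{v}' \textbf{z}$ and $\textbf{z}_\perp := (\textbf{I}_p - \textbf{v}\textbf{v}') \textbf{z}$, so $\textbf{z} = t \textbf{v} + \textbf{z}_\perp$. Because $\textbf{v}$ is an eigenvector of $\boldsymbol{\Sigma}$ and each $\boldsymbol{\mu}_k - \boldsymbol{\mu}$ is proportional to $\textbf{h}$, within each mixture component $\textbf{z}_\perp$ is centered Gaussian with covariance $\boldsymbol{\Sigma} - \phi \textbf{v}\textbf{v}'$ and independent of $t$; hence, unconditionally, $\textbf{z}_\perp$ is independent of $t$ and $\mathrm{E}(\textbf{z}_\perp) = \textbf{0}$. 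Using $\mathrm{E}(t \textbf{z}) = \mathrm{Cov}(\textbf{x}) \textbf{v} = \lambda_1 \textbf{v}$, $\mathrm{E}(t^2) = \lambda_1$ and $\mathrm{E}(t^4) = \kappa(\textbf{v}) \lambda_1^2 = \kappa(\boldsymbol{\theta}) \lambda_1^2$ (kurtosis being scale invariant), the decomposition yields
\begin{align*}
    \mathrm{Cov}(t \textbf{z}) = \mathrm{E}(t^4) \textbf{v}\textbf{v}' + \lambda_1 (\boldsymbol{\Sigma} - \phi \textbf{v}\textbf{v}') - \lambda_1^2 \textbf{v}\textbf{v}' = \lambda_1 \boldsymbol{\Sigma} + \lambda_1 \{ \kappa(\boldsymbol{\theta}) \lambda_1 - \phi - \lambda_1 \} \textbf{v}\textbf{v}'.
\end{align*}
Substituting $\lambda_1 = \phi(1 + \beta \tau)$, $\textbf{h}\textbf{h}' = \phi \tau\, \textbf{v}\textbf{v}'$ and $\| \boldsymbol{\theta} \|^2 = \tau / \phi$ identifies the right-hand side with $(1 + \beta \tau) \| \boldsymbol{\theta} \|^{-2} \{ \tau (\boldsymbol{\Sigma} - \tau^{-1} \textbf{h}\textbf{h}') + (1 + \beta \tau)( \kappa(\boldsymbol{\theta}) - 1 ) \textbf{h}\textbf{h}' \}$, and conjugating by $\textbf{M}^{\dagger}$ gives the stated $\boldsymbol{\Psi}_{\mathrm{PCA}}$.

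The step I expect to require the most care is making the eigenvector delta method rigorous: establishing the differentiability of the sign-normalized leading-eigenvector map at $\mathrm{Cov}(\textbf{x})$ and coupling it with the almost sure convergence of part 1) so that a single sign sequence $s_n$ serves both conclusions. The concluding moment computation, while somewhat lengthy, is routine once the orthogonal decomposition $\textbf{z} = t \textbf{v} + \textbf{z}_\perp$ with $t \perp \textbf{z}_\perp$ is in hand.
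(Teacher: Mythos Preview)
Your proposal is correct and arrives at exactly the same limiting representation $\sqrt{n}(s_n\textbf{u}_n-\textbf{v})\rightsquigarrow -\textbf{M}^{\dagger}\textbf{G}\textbf{v}$ as the paper, but the execution differs from the paper's proof in two places worth noting.

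First, for the eigenvector linearization the paper does not invoke a ready-made delta method. It instead writes out the Lagrangian estimating equation $(\textbf{I}_p-\textbf{P}_n)\textbf{C}_n s_n\textbf{u}_n=\textbf{0}$, rearranges it as $\textbf{A}_n\sqrt{n}(s_n\textbf{u}_n-\textbf{u}_1)=-(\textbf{I}_p-\textbf{P}_n)\sqrt{n}\{\textbf{C}_n-\mathrm{Cov}(\textbf{x})\}\textbf{u}_1$, and then adds the algebraic identity $\textbf{B}_n\sqrt{n}(s_n\textbf{u}_n-\textbf{u}_1)=\textbf{0}$ with $\textbf{B}_n=\textbf{u}_1\textbf{u}_1'+s_n\textbf{u}_1\textbf{u}_n'$ so that $\textbf{A}_n+\textbf{B}_n$ is eventually invertible. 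Its limit inverse is $\tfrac{1}{2}\textbf{u}_1\textbf{u}_1'+\sum_{j\ge 2}(\phi_j-\phi_1)^{-1}\textbf{u}_j\textbf{u}_j'$, whose action on the orthogonal complement of $\textbf{u}_1$ is exactly $\textbf{M}^{\dagger}$. This is the same endpoint as your delta-method derivative $\textbf{E}\mapsto -\textbf{M}^{\dagger}\textbf{E}\textbf{v}$, but the hands-on route sidesteps the differentiability-and-sign issue you flagged as the delicate step: the paper never needs to define a smooth eigenvector map, only to invert an almost-surely convergent matrix sequence, so the same sign sequence from part 1) automatically serves part 2).

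Second, for the covariance $\mathrm{Cov}\{(\textbf{v}'\textbf{z})\textbf{z}\}$ the paper reuses the general moment identities $\textbf{m}_k(\textbf{u}_0)=\|\boldsymbol{\theta}\|\tau^{-1}s_{k+1}\textbf{h}$ and $\textbf{G}_k(\textbf{u}_0)=s_k\boldsymbol{\Sigma}+\tau^{-1}\{\tau^{-1}\|\boldsymbol{\theta}\|^2 s_{k+2}-s_k\}\textbf{h}\textbf{h}'$ established (via conditioning) in the proof of Theorem~\ref{theo:asnorm_blind}; these hold for any mixture covariance $\boldsymbol{\Sigma}$, not only under condition i). Your orthogonal decomposition $\textbf{z}=t\textbf{v}+\textbf{z}_\perp$ with $t\perp\!\!\!\perp\textbf{z}_\perp$ is shorter and more transparent, but it genuinely uses condition i) (that $\textbf{v}$ is an eigenvector of $\boldsymbol{\Sigma}$) to obtain the independence; without it, $t$ and $\textbf{z}_\perp$ are merely uncorrelated within each component. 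Since condition i) is assumed here anyway, this is a legitimate simplification, and your final algebraic identification with the stated $\boldsymbol{\Psi}_{\mathrm{PCA}}$ via $\lambda_1=\phi(1+\beta\tau)$, $\textbf{h}\textbf{h}'=\phi\tau\,\textbf{v}\textbf{v}'$ and $\|\boldsymbol{\theta}\|^2=\tau/\phi$ checks out.
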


It is evident from part 2) of Theorem \ref{theo:asnorm_PCA} that the limiting covariance matrix of the PCA-based estimator is not proportional to the four others derived in Theorems \ref{theo:asnorm_unblind}, \ref{theo:asnorm_blind}, \ref{theo:asnorm_blind_2} and \ref{theo:asnorm_blind_3}. However, proportionality is reached in the special case where the group covariance matrix is spherical, $\boldsymbol{\Sigma} = \sigma^2 \textbf{I}_p$, for some $\sigma^2 > 0$. In this case, $\textbf{h}/\| \textbf{h} \| = \boldsymbol{\theta}/\| \boldsymbol{\theta} \|$, the Moore-Penrose pseudoinverses in Theorem \ref{theo:asnorm_PCA} equal $ (\boldsymbol{\Sigma} + \beta \textbf{h} \textbf{h}' - \phi \textbf{I}_p)^\dagger = -1/(\beta \| \textbf{h} \|^2) (\textbf{I}_p - \textbf{h} \textbf{h}'/\| \textbf{h} \|^2)$ and the limiting covariance matrix $\boldsymbol{\Psi}_{\mathrm{PCA}}$ can be expressed as
\begin{align*}
    \boldsymbol{\Psi}_{\mathrm{PCA}} = \frac{1}{\tau \beta} \left( \frac{1 + \beta \tau}{\| \boldsymbol{\theta} \|^2 \beta} \right) \left( \textbf{I}_p - \frac{\boldsymbol{\theta} \boldsymbol{\theta}'}{\| \boldsymbol{\theta} \|^2} \right) \boldsymbol{\Sigma}^{-1} \left( \textbf{I}_p - \frac{\boldsymbol{\theta} \boldsymbol{\theta}'}{\| \boldsymbol{\theta} \|^2} \right).
\end{align*}
Comparison to Theorem \ref{theo:asnorm_unblind} now reveals that the relative asymptotic efficiency of PCA vs. LDA equals $\tau \beta$, showing, in particular, that in the balanced case with $\alpha_1 = \alpha_2$ PCA surpasses LDA in asymptotic efficiency as soon as the Mahalanobis distance between the groups is greater than 4. Moreover, in the limit $\tau \rightarrow \infty$, PCA is infinitely more efficient than LDA regardless of the mixing proportion. This counterintuitive result is, of course, not something one should rely on in practice, as the conditions required to achieve the situation are being very restrictive.

\section{Proofs of technical results}\label{sec:proofs}

\begin{lemma}\label{lem:ic_mixture_intersection}
	Let $\textbf{x} \sim \alpha_1 \mathcal{N}_p(\boldsymbol{\mu}_1, \boldsymbol{\Sigma}) + \alpha_2 \mathcal{N}_p(\boldsymbol{\mu}_2, \boldsymbol{\Sigma})$, where $\alpha_1, \alpha_2 > 0$, $\alpha_1 + \alpha_2 = 1$, $\boldsymbol{\mu}_1, \boldsymbol{\mu}_2 \in \mathbb{R}^p$, $\boldsymbol{\mu}_1 \neq \boldsymbol{\mu}_2$, and $\boldsymbol{\Sigma} \in \mathbb{R}^{p \times p} $ is full rank. Then $\textbf{x}$ is an independent component model, i.e., there exists an invertible matrix $\boldsymbol{\Gamma} \in \mathbb{R}^{p \times p}$ such that $\boldsymbol{\Gamma} \{ \textbf{x} - \mathrm{E}(\textbf{x}) \}$ has independent components.
\end{lemma}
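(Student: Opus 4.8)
The plan is to bring the mixture into a canonical form in which only a single coordinate carries the mixing structure, and then to read the independence off directly from a factorized density.

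First I would whiten and rotate. Since $\boldsymbol{\Sigma}$ is full rank, fix any invertible $\textbf{L} \in \mathbb{R}^{p \times p}$ with $\boldsymbol{\Sigma} = \textbf{L} \textbf{L}'$ (for instance the symmetric square root). The vector $\textbf{d} := \textbf{L}^{-1}(\boldsymbol{\mu}_2 - \boldsymbol{\mu}_1)$ is nonzero, because $\boldsymbol{\mu}_1 \neq \boldsymbol{\mu}_2$ and $\textbf{L}^{-1}$ is invertible, so I may pick an orthogonal matrix $\textbf{Q} \in \mathbb{R}^{p \times p}$ whose first row equals $\textbf{d}'/\| \textbf{d} \|$ (complete the unit vector $\textbf{d}/\| \textbf{d} \|$ to an orthonormal basis). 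Put $\boldsymbol{\Gamma} := \textbf{Q} \textbf{L}^{-1}$, which is invertible, and consider $\textbf{w} := \textbf{Q} \textbf{L}^{-1} (\textbf{x} - \boldsymbol{\mu}_1)$. Conditionally on the latent class label, $\textbf{x}$ is $\mathcal{N}_p(\boldsymbol{\mu}_1, \boldsymbol{\Sigma})$ with probability $\alpha_1$ and $\mathcal{N}_p(\boldsymbol{\mu}_2, \boldsymbol{\Sigma})$ with probability $\alpha_2$, so $\textbf{w}$ is $\mathcal{N}_p(\textbf{0}, \textbf{I}_p)$ with probability $\alpha_1$ and $\mathcal{N}_p(\| \textbf{d} \| \textbf{e}_1, \textbf{I}_p)$ with probability $\alpha_2$, where $\textbf{e}_1 \in \mathbb{R}^p$ is the first canonical basis vector; this uses $\textbf{Q} \textbf{L}^{-1} \boldsymbol{\Sigma} (\textbf{L}^{-1})' \textbf{Q}' = \textbf{I}_p$ and $\textbf{Q} \textbf{d} = \| \textbf{d} \| \textbf{e}_1$.

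The key step is the factorization of the density of $\textbf{w}$. Writing $\phi$ for the univariate standard normal density and $\nu := \| \textbf{d} \|$, the two class-conditional densities of $\textbf{w}$ are $\phi(w_1) \prod_{j=2}^p \phi(w_j)$ and $\phi(w_1 - \nu) \prod_{j=2}^p \phi(w_j)$, hence the mixture density of $\textbf{w}$ equals $\{ \alpha_1 \phi(w_1) + \alpha_2 \phi(w_1 - \nu) \} \prod_{j=2}^p \phi(w_j)$. This is a product of functions of the individual coordinates, so the components $w_1, \ldots, w_p$ of $\textbf{w}$ are mutually independent ($w_1$ following a univariate normal mixture and $w_2, \ldots, w_p$ being independent standard normals). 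Finally, $\boldsymbol{\Gamma} \{ \textbf{x} - \mathrm{E}(\textbf{x}) \} = \textbf{w} - \mathrm{E}(\textbf{w})$ differs from $\textbf{w}$ only by a deterministic shift, and shifting coordinates by constants preserves independence; therefore $\boldsymbol{\Gamma} \{ \textbf{x} - \mathrm{E}(\textbf{x}) \}$ has independent components, which is the assertion.

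I do not expect a substantial obstacle here: the construction is explicit. The only points needing a little care are the existence of an orthogonal $\textbf{Q}$ with the prescribed first row and the bookkeeping showing that re-centering at $\mathrm{E}(\textbf{x}) = \alpha_1 \boldsymbol{\mu}_1 + \alpha_2 \boldsymbol{\mu}_2$ rather than at $\boldsymbol{\mu}_1$ leaves the independence undisturbed, both of which are routine.
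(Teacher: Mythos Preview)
Your argument is correct and follows essentially the same route as the paper: whiten by $\boldsymbol{\Sigma}^{-1/2}$ (your $\textbf{L}^{-1}$), rotate so that the transformed mean difference aligns with $\textbf{e}_1$, and then factor the resulting mixture density into a univariate normal mixture in the first coordinate times standard normals in the rest. The only cosmetic difference is that the paper centers at $\mathrm{E}(\textbf{x})$ from the outset, whereas you center at $\boldsymbol{\mu}_1$ and then observe that the additional deterministic shift to $\mathrm{E}(\textbf{x})$ preserves independence; both are fine.
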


\begin{proof}[Proof of Lemma \ref{lem:ic_mixture_intersection}]
	We have
	\begin{align*}
	\textbf{x} - \mathrm{E}(\textbf{x}) \sim \alpha_1 \mathcal{N}_p(-\alpha_2 \textbf{h}, \boldsymbol{\Sigma}) + \alpha_2 \mathcal{N}_p(\alpha_1 \textbf{h}, \boldsymbol{\Sigma}),
	\end{align*}
	where $\textbf{h} := \boldsymbol{\mu}_2 - \boldsymbol{\mu}_1$. Let then $ \boldsymbol{\Gamma} := \textbf{U}' \boldsymbol{\Sigma}^{-1/2} $ where $\textbf{U}$ is an orthogonal matrix satisfying $ \textbf{U}' \boldsymbol{\Sigma}^{-1/2} \textbf{h} \propto \textbf{e}_1$ and $\textbf{e}_1$ is the first canonical basis vector of $\mathbb{R}^p$ (such an $\textbf{U}$ always exists as $\boldsymbol{\Sigma}$ is full rank and $\textbf{h} \neq \textbf{0}$). Now,
	\begin{align*}
	\boldsymbol{\Gamma} \{ \textbf{x} - \mathrm{E}(\textbf{x}) \} \sim \alpha_1 \mathcal{N}_p(-\alpha_2 b \textbf{e}_1, \textbf{I}_p) + \alpha_2 \mathcal{N}_p(\alpha_1 b \textbf{e}_1, \textbf{I}_p),
	\end{align*}
	for some $b \neq 0$. The result now follows by writing out the density function of $\boldsymbol{\Gamma}\{ \textbf{x} - \mathrm{E}(\textbf{x}) \}$ and observing that it factors into a product of the density of a univariate Gaussian mixture and the densities of $p - 1$ univariate Gaussians with zero means.
\end{proof}

\begin{proof}[Proof of Theorem \ref{theo:asnorm_unblind}]

The estimator $\textbf{w}$ is translation invariant, meaning that we may, without loss of generality, assume that $\mathrm{E}(\textbf{x}) = \textbf{0}$. Under this, the model \eqref{eq:xy_model} takes the form
\begin{align}
    y  \sim \mathrm{Ber}(\alpha_1) \quad \mbox{and} \quad \textbf{x} \mid y \sim \mathcal{N}_p \{ -y \alpha_2 \textbf{h} + (1 - y) \alpha_1 \textbf{h}, \boldsymbol{\Sigma} \},
\end{align}
where $\textbf{h} := \boldsymbol{\mu}_2 - \boldsymbol{\mu}_1$. Concurrently, $\textbf{x} \sim \alpha_1 \mathcal{N}_p(-\alpha_2 \textbf{h}, \boldsymbol{\Sigma}) + \alpha_2 \mathcal{N}_p(\alpha_1 \textbf{h}, \boldsymbol{\Sigma})$ and $\mathrm{Cov}(\textbf{x}) = \boldsymbol{\Sigma} + \beta \textbf{h} \textbf{h}'$.

We begin by deriving asymptotic linearizations for $\bar{\textbf{x}}_{n2} - \bar{\textbf{x}}_{n1}$. Let in the following $\beta := \alpha_1 \alpha_2$. By LLN, $\bar{y}_n \rightarrow_p \alpha_1$ and $(1/n) \sum_i y_i \textbf{x}_i \rightarrow_p -\beta \textbf{h}$. Hence, the relation $\bar{y}_n \bar{\textbf{x}}_{n1} = (1/n) \sum_i y_i \textbf{x}_i$ shows that $\bar{\textbf{x}}_{n1} \rightarrow_p -\alpha_2 \textbf{h}$. We further have the expansion,
\begin{align*}
    \sqrt{n} \left( \frac{1}{n}  \sum_{i=1}^n y_i \textbf{x}_i + \beta \textbf{h} \right) = \sqrt{n} \left( \bar{y}_n - \alpha_1 \right) \bar{\textbf{x}}_{n1} + \alpha_1 \sqrt{n} (\bar{\textbf{x}}_{n1} + \alpha_2 \textbf{h}),
\end{align*}
which, by CLT, shows that $\bar{\textbf{x}}_{n1}$ is asymptotically normal,
\begin{align*}
    \alpha_1 \sqrt{n} (\bar{\textbf{x}}_{n1} + \alpha_2 \textbf{h}) = \sqrt{n} \left( \frac{1}{n}  \sum_i y_i \textbf{x}_i + \beta \textbf{h} \right) + \sqrt{n} ( \bar{y}_n - \alpha_1 ) \alpha_2 \textbf{h} + o_p(1).
\end{align*}
One can similarly show that,
\begin{align*}
    \alpha_2 \sqrt{n} (\bar{\textbf{x}}_{n2} - \alpha_1 \textbf{h}) = \sqrt{n} \left\{ \frac{1}{n}  \sum_i (1 - y_i) \textbf{x}_i - \beta \textbf{h} \right\} + \sqrt{n} (\bar{y}_n - \alpha_1 ) \alpha_1 \textbf{h} + o_p(1).
\end{align*}
Defining $\textbf{a}_{n1} := \sqrt{n} \{ (1/n) \sum_i y_i \textbf{x}_i + \beta \textbf{h} \}$, $\textbf{a}_{n2} := \sqrt{n} \{ (1/n) \sum_i (1 - y_i) \textbf{x}_i - \beta \textbf{h} \}$ and $a_{n3} := \sqrt{n} ( \bar{y}_n - \alpha_1 )$, the previous two can be written as $ \alpha_1 \sqrt{n} (\bar{\textbf{x}}_{n1} + \alpha_2 \textbf{h}) = \textbf{a}_{n1} + a_{n3}  \alpha_2 \textbf{h} + o_p(1) $ and $ \alpha_2 \sqrt{n} (\bar{\textbf{x}}_{n2} - \alpha_1 \textbf{h}) = \textbf{a}_{n2} + a_{n3}  \alpha_1 \textbf{h} + o_p(1)  $. The two in combination yield the desired linearization,
\begin{align*}
    \beta \sqrt{n}(\bar{\textbf{x}}_{n2} - \bar{\textbf{x}}_{n1} - \textbf{h}) &= \alpha_1 \textbf{a}_{n2} - \alpha_2 \textbf{a}_{n1} + (\alpha_1 - \alpha_2) a_{n3} \textbf{h} + o_p(1).
\end{align*}

We then derive a similar expansion for the pooled covariance matrix $\textbf{S}_n$. It is straightforwardly seen that $\sum_{i=1}^n y_i (\textbf{x}_i - \bar{\textbf{x}}_{n1}) (\textbf{x}_i - \bar{\textbf{x}}_{n1})' = \sum_i y_i \textbf{x}_i \textbf{x}_i' - \sum_i y_i \bar{\textbf{x}}_1 \bar{\textbf{x}}_1'$. This together with the equivalent formula for the second group yields
\begin{align*}
    \textbf{S}_n = \frac{1}{n - 2} \left\{  \sum_{i=1}^n \textbf{x}_i \textbf{x}_i' - \sum_{i=1}^n y_i \bar{\textbf{x}}_{n1} \bar{\textbf{x}}_{n1}' - \sum_{i=1}^n (1 - y_i) \bar{\textbf{x}}_{n2} \bar{\textbf{x}}_{n2}' \right\}.
\end{align*}
Since $\sqrt{n}( \textbf{S}_n - \boldsymbol{\Sigma}) = \sqrt{n}[\{ (n - 2)/n \} \textbf{S}_n - \boldsymbol{\Sigma}] + o_p(1) $, we have the expansion,
\begin{align*}
    \sqrt{n}( \textbf{S}_n - \boldsymbol{\Sigma}) &= \sqrt{n} \left\{ \frac{1}{n} \sum_{i=1}^n \textbf{x}_i \textbf{x}_i' - (\boldsymbol{\Sigma} + \beta \textbf{h} \textbf{h}') \right\} - \sqrt{n} \left( \frac{1}{n} \sum_{i=1}^n y_i \bar{\textbf{x}}_{n1} \bar{\textbf{x}}_{n1}' - \alpha_2 \beta \textbf{h} \textbf{h}' \right)\\
    &- \sqrt{n} \left\{ \frac{1}{n} \sum_{i=1}^n (1 - y_i) \bar{\textbf{x}}_{n2} \bar{\textbf{x}}_{n2}' - \alpha_1 \beta \textbf{h} \textbf{h}' \right\} + o_p(1).
\end{align*}
The second term above expands as,
\begin{align*}
    \sqrt{n} \left( \frac{1}{n} \sum_{i=1}^n y_i \bar{\textbf{x}}_{n1} \bar{\textbf{x}}_{n1}' - \alpha_2 \beta \textbf{h} \textbf{h}' \right) =& \alpha_2^2 a_{n3} \textbf{h} \textbf{h}' - \beta \sqrt{n} (\bar{\textbf{x}}_{n1} + \alpha_2 \textbf{h}) \textbf{h}' - \beta \textbf{h} \sqrt{n} (\bar{\textbf{x}}_{n1} + \alpha_2 \textbf{h})'\\
    &+ o_p(1),
\end{align*}
and the third as,
\begin{align*}
    \sqrt{n} \left\{ \frac{1}{n} \sum_{i=1}^n (1 - y_i) \bar{\textbf{x}}_{n2} \bar{\textbf{x}}_{n2}' - \alpha_1 \beta \textbf{h} \textbf{h}' \right\} =& -a_{n3} \alpha_1^2 \textbf{h} \textbf{h}' + \beta \sqrt{n} (\bar{\textbf{x}}_{n2} - \alpha_1 \textbf{h}) \textbf{h}'\\
    &+ \beta \textbf{h} \sqrt{n} (\bar{\textbf{x}}_{n2} - \alpha_1 \textbf{h})' + o_p(1).
\end{align*}
Denoting then $\textbf{A}_{n4} := \sqrt{n} \{ (1/n) \sum_i \textbf{x}_i \textbf{x}_i' - (\boldsymbol{\Sigma} + \beta \textbf{h} \textbf{h}') \} $, the linearizations derived earlier for $\bar{\textbf{x}}_{n1}$ and $\bar{\textbf{x}}_{n2}$ allow us to write,
\begin{align*}
    \sqrt{n}( \textbf{S}_n - \boldsymbol{\Sigma}) &= \textbf{A}_{n4} + a_{n3} (\alpha_2 - \alpha_1) \textbf{h} \textbf{h}' + \alpha_2 \textbf{a}_{n1} \textbf{h}' + \alpha_2 \textbf{h} \textbf{a}_{n1}'  - \alpha_1 \textbf{a}_{n2} \textbf{h}' - \alpha_1 \textbf{h} \textbf{a}_{n2}' + o_p(1).
\end{align*}
The above in particular shows that $\textbf{S}$ is asymptotically normal. Hence, the relation
\begin{align*}
    \textbf{0} = \sqrt{n}(\textbf{S}_n \textbf{S}_n^{-1} - \textbf{I}_p) = \sqrt{n}(\textbf{S}_n  - \boldsymbol{\Sigma})\textbf{S}_n^{-1} + \boldsymbol{\Sigma} \sqrt{n}( \textbf{S}_n^{-1} - \boldsymbol{\Sigma}^{-1}),
\end{align*}
gives $ \sqrt{n}( \textbf{S}_n^{-1} - \boldsymbol{\Sigma}^{-1}) = -\boldsymbol{\Sigma}^{-1} \sqrt{n}(\textbf{S}_n - \boldsymbol{\Sigma}) \boldsymbol{\Sigma}^{-1} + o_p(1) $.

We are now equipped to derive the limiting distribution of the optimal direction $\textbf{w}_n = \textbf{S}_n^{-1} (\bar{\textbf{x}}_{n2} - \bar{\textbf{x}}_{n1})$. Recalling that $\boldsymbol{\theta} = \boldsymbol{\Sigma}^{-1} \textbf{h}$, we have, by the calculus of $o_p(1)$ and $O_p(1)$ sequences,
\begin{align*}
    \sqrt{n}(\textbf{w}_n - \boldsymbol{\theta}) &= \sqrt{n}( \textbf{S}_n^{-1} - \boldsymbol{\Sigma}^{-1}) \textbf{h} + \boldsymbol{\Sigma}^{-1} \sqrt{n}(\bar{\textbf{x}}_{n2} - \bar{\textbf{x}}_{n1} - \textbf{h}) + o_p(1) \\
    &= -\boldsymbol{\Sigma}^{-1} \sqrt{n}(\textbf{S}_n  - \boldsymbol{\Sigma}) \boldsymbol{\theta} + \boldsymbol{\Sigma}^{-1} \sqrt{n}(\bar{\textbf{x}}_{n2} - \bar{\textbf{x}}_{n1} - \textbf{h}) + o_p(1).
\end{align*}
Hence,
\begin{align*}
    \beta \boldsymbol{\Sigma} \sqrt{n}(\textbf{w}_n - \boldsymbol{\theta}) &= -\beta \textbf{A}_{n4} \boldsymbol{\theta} - a_{n3} \beta (\alpha_2 - \alpha_1) \textbf{h} \textbf{h}'\boldsymbol{\theta} - \alpha_2 \beta \textbf{a}_{n1} \textbf{h}'\boldsymbol{\theta}\\
    &- \alpha_2 \beta \textbf{h} \textbf{a}_{n1}'\boldsymbol{\theta} + \alpha_1 \beta \textbf{a}_{n2} \textbf{h}'\boldsymbol{\theta} + \alpha_1 \beta \textbf{h} \textbf{a}_{n2}'\boldsymbol{\theta} \\
    &+ \alpha_1 \textbf{a}_{n2} - \alpha_2 \textbf{a}_{n1} + a_{n3} (\alpha_1 - \alpha_2) \textbf{h} + o_p(1)\\
    &= -\beta \textbf{A}_{n4} \boldsymbol{\theta} - a_{n3} (\alpha_2 - \alpha_1) (\beta \textbf{h}' \boldsymbol{\theta} + 1) \textbf{h} - \alpha_2 (\beta \textbf{h}'\boldsymbol{\theta} + 1) \textbf{a}_{n1} \\
    &+ \alpha_1 (\beta \textbf{h}'\boldsymbol{\theta} + 1) \textbf{a}_{n2} - \alpha_2 \beta \textbf{h} \textbf{a}_{n1}'\boldsymbol{\theta} + \alpha_1 \beta \textbf{h} \textbf{a}_{n2}'\boldsymbol{\theta} + o_p(1).
\end{align*}
By the definitions of $\textbf{a}_{n1}, \textbf{a}_{n2}, a_{n3}$ and $\textbf{A}_{n4}$ and CLT, the limiting covariance matrix of $\beta \boldsymbol{\Sigma} \sqrt{n}(\textbf{w} - \boldsymbol{\theta})$ is
\begin{align*}
    \mathrm{Cov} \{ -\beta (\boldsymbol{\theta}'\textbf{x}) \textbf{x} - (\alpha_2 - \alpha_1) \Delta y \textbf{h} - \alpha_2 \Delta y \textbf{x} + \alpha_1 \Delta (1 - y) \textbf{x} - \alpha_2 \beta y (\boldsymbol{\theta}' \textbf{x}) \textbf{h} + \alpha_1 \beta (1 - y) (\boldsymbol{\theta}' \textbf{x}) \textbf{h} \},
\end{align*}
where $\Delta := \beta \lambda + 1$ and $\lambda := \boldsymbol{\theta}'\textbf{h}$. The covariance matrix is a sum of a total of 36 terms, which we next compute one-by-one. We use the notation $\gamma := \alpha_1^3 + \alpha_2^3$.
\begin{multicols}{2}
\begin{itemize}
    \item[(1, 1):] $\beta^2 ( 1 + 3 \beta \lambda + \beta (\gamma - \beta) \lambda^2) \textbf{h} \textbf{h}' + \beta^2 \lambda ( 1 + \beta \lambda) \boldsymbol{\Sigma}$.
    \item[(1, 2):] $\beta^2 (\alpha_2 - \alpha_1)^2 (\beta \lambda + 1) \lambda \textbf{h} \textbf{h}'$.
    \item[(1, 3):] $ - \beta^2 \alpha_2 \lambda (\beta \lambda + 1) \boldsymbol{\Sigma} - \beta^2 \alpha_2 (\beta \lambda + 1) \{1 + \lambda \alpha_2 (\alpha_2 - \alpha_1) \} \textbf{h} \textbf{h}'$.
    \item[(1, 4):] $ - \beta^2 \alpha_1 \lambda (\beta \lambda + 1) \boldsymbol{\Sigma} - \beta^2 \alpha_1 (\beta \lambda + 1) \{1 + \lambda \alpha_1 (\alpha_1 - \alpha_2) \} \textbf{h} \textbf{h}'$.
    \item[(1, 5):] $\beta^3 \alpha_2 \lambda \{ \lambda \alpha_2 (\alpha_1 - \alpha_2) - 2 \} \textbf{h} \textbf{h}'$.
    \item[(1, 6):] $\beta^3 \alpha_1 \lambda \{ \lambda \alpha_1 (\alpha_2 - \alpha_1) - 2 \} \textbf{h} \textbf{h}'$.
    \item[(2, 1):] $\beta^2 (\alpha_2 - \alpha_1)^2 (\beta \lambda + 1) \lambda \textbf{h} \textbf{h}'$.
    \item[(2, 2):] $(\alpha_2 - \alpha_1)^2 (\beta \lambda + 1)^2 \beta \textbf{h} \textbf{h}'$.
    \item[(2, 3):] $-(\alpha_2 - \alpha_1) (\beta \lambda + 1)^2 \alpha_2^2 \beta \textbf{h} \textbf{h}'$.
    \item[(2, 4):] $(\alpha_2 - \alpha_1) \alpha_1^2 (\beta \lambda + 1)^2 \beta \textbf{h} \textbf{h}' $.
    \item[(2, 5):] $-(\alpha_2 - \alpha_1) (\beta \lambda + 1) \alpha_2^2 \beta^2 \lambda \textbf{h} \textbf{h}'$.
    \item[(2, 6):] $(\alpha_2 - \alpha_1)(\beta \lambda + 1) \alpha_1^2 \beta^2 \lambda \textbf{h} \textbf{h}'$.
    \item[(3, 1):] $ - \beta^2 \alpha_2 \lambda (\beta \lambda + 1) \boldsymbol{\Sigma} - \beta^2 \alpha_2 (\beta \lambda + 1) \{1 + \lambda \alpha_2 (\alpha_2 - \alpha_1) \} \textbf{h} \textbf{h}'$.
    \item[(3, 2):] $-(\alpha_2 - \alpha_1) (\beta \lambda + 1)^2 \alpha_2^2 \beta \textbf{h} \textbf{h}'$.
    \item[(3, 3):] $\alpha_2 \beta (\beta \lambda + 1)^2 \boldsymbol{\Sigma} + \alpha_2^4 \beta (\beta \lambda + 1)^2 \textbf{h} \textbf{h}'$.
    \item[(3, 4):] $-\beta^3 (\beta \lambda + 1)^2 \textbf{h} \textbf{h}'$.
    \item[(3, 5):] $\alpha_2 (\beta \lambda + 1) \beta^2 (1 + \lambda \alpha_2^3) \textbf{h} \textbf{h}'$.
    \item[(3, 6):] $-\beta^4 \lambda (\beta \lambda + 1) \textbf{h} \textbf{h}'$.
    \item[(4, 1):] $ - \beta^2 \alpha_1 \lambda (\beta \lambda + 1) \boldsymbol{\Sigma} - \beta^2 \alpha_1 (\beta \lambda + 1) \{1 + \lambda \alpha_1 (\alpha_1 - \alpha_2) \} \textbf{h} \textbf{h}'$.
    \item[(4, 2):] $(\alpha_2 - \alpha_1) \alpha_1^2 (\beta \lambda + 1)^2 \beta \textbf{h} \textbf{h}' $.
    \item[(4, 3):] $-\beta^3 (\beta \lambda + 1)^2 \textbf{h} \textbf{h}'$.
    \item[(4, 4):] $\alpha_1 \beta (\beta \lambda + 1)^2 \boldsymbol{\Sigma} + \alpha_1^4 \beta (\beta \lambda + 1)^2 \textbf{h} \textbf{h}'$.
    \item[(4, 5):] $-\beta^4 \lambda (\beta \lambda + 1) \textbf{h} \textbf{h}'$.
    \item[(4, 6):] $\alpha_1 (\beta \lambda + 1) \beta^2 (1 + \lambda \alpha_1^3) \textbf{h} \textbf{h}'$.
    \item[(5, 1):] $\beta^3 \alpha_2 \lambda \{ \lambda \alpha_2 (\alpha_1 - \alpha_2) - 2 \} \textbf{h} \textbf{h}'$.
    \item[(5, 2):] $-(\alpha_2 - \alpha_1) (\beta \lambda + 1) \alpha_2^2 \beta^2 \lambda \textbf{h} \textbf{h}'$.
    \item[(5, 3):] $\alpha_2 (\beta \lambda + 1) \beta^2 (1 + \lambda \alpha_2^3) \textbf{h} \textbf{h}'$.
    \item[(5, 4):] $-\beta^4 \lambda (\beta \lambda + 1) \textbf{h} \textbf{h}'$.
    \item[(5, 5):] $\alpha_2 \beta^3 \lambda (1 + \lambda \alpha_2^3) \textbf{h} \textbf{h}'$.
    \item[(5, 6):] $-\beta^5 \lambda^2 \textbf{h} \textbf{h}' $.
    \item[(6, 1):] $\beta^3 \alpha_1 \lambda \{ \lambda \alpha_1 (\alpha_2 - \alpha_1) - 2 \} \textbf{h} \textbf{h}'$.
    \item[(6, 2):] $(\alpha_2 - \alpha_1)(\beta \lambda + 1) \alpha_1^2 \beta^2 \lambda \textbf{h} \textbf{h}'$.
    \item[(6, 3):] $-\beta^4 \lambda (\beta \lambda + 1) \textbf{h} \textbf{h}'$.
    \item[(6, 4):] $\alpha_1 (\beta \lambda + 1) \beta^2 (1 + \lambda \alpha_1^3) \textbf{h} \textbf{h}'$.
    \item[(6, 5):] $-\beta^5 \lambda^2 \textbf{h} \textbf{h}' $.
    \item[(6, 6):] $\alpha_1 \beta^3 \lambda (1 + \lambda \alpha_1^3) \textbf{h} \textbf{h}'$.
\end{itemize}
\end{multicols}
Summing the previous terms, we obtain $\beta ( 1 + \beta \lambda ) \boldsymbol{\Sigma} + \beta^2 \textbf{h} \textbf{h}'$. Hence, the the limiting covariance of $ \sqrt{n}(\textbf{w}_n - \boldsymbol{\theta})$ is $ (\boldsymbol{\theta}' \textbf{h} + 1/\beta) \boldsymbol{\Sigma}^{-1} + \boldsymbol{\theta} \boldsymbol{\theta}' $.

Finally, the Jacobian of the map $\boldsymbol{\theta} \mapsto \boldsymbol{\theta}/\| \boldsymbol{\theta} \|$ is $(\| \boldsymbol{\theta} \|^2 \textbf{I}_p - \boldsymbol{\theta} \boldsymbol{\theta}')/\| \boldsymbol{\theta} \|^3$ and the delta method then implies that the scaled direction $\textbf{w}/\| \textbf{w} \|$ has the limiting covariance matrix,
\begin{align*}
    \boldsymbol{\Psi}_U &:= (\| \boldsymbol{\theta} \|^2 \textbf{I}_p - \boldsymbol{\theta} \boldsymbol{\theta}')/\| \boldsymbol{\theta} \|^3 \{ (\boldsymbol{\theta}' \textbf{h} + 1/\beta) \boldsymbol{\Sigma}^{-1} + \boldsymbol{\theta} \boldsymbol{\theta}' \} (\| \boldsymbol{\theta} \|^2 \textbf{I}_p - \boldsymbol{\theta} \boldsymbol{\theta}')/\| \boldsymbol{\theta} \|^3\\
    &= \left( \frac{\boldsymbol{\theta}'\boldsymbol{\Sigma} \boldsymbol{\theta}}{\| \boldsymbol{\theta} \|^2}  + \frac{1}{\beta \| \boldsymbol{\theta} \|^2} \right) \left( \textbf{I}_p - \frac{\boldsymbol{\theta} \boldsymbol{\theta}'}{\| \boldsymbol{\theta} \|^2} \right) \boldsymbol{\Sigma}^{-1} \left( \textbf{I}_p - \frac{\boldsymbol{\theta} \boldsymbol{\theta}'}{\| \boldsymbol{\theta} \|^2} \right).
\end{align*}

\end{proof}

Before proving results regarding the blind estimators, we establish two auxiliary lemmas.

\begin{lemma}\label{lem:full_rank_1_inversion}
    Let $\textbf{A} = \sum_{j=2}^p \lambda_j \textbf{w}_j \textbf{w}_j' + \textbf{w}_1 \textbf{w}_1' \textbf{C} \in \mathbb{R}^{p \times p}$, where $\lambda_2, \ldots , \lambda_p \in \mathbb{R}$, $\textbf{w}_1, \ldots, \textbf{w}_p$ constitute an orthonormal set of vectors and $\textbf{C} \in \mathbb{R}^{p \times p}$ is a symmetric positive definite matrix. Then $\textbf{A}$ is invertible and
    \begin{align*}
    \textbf{A}^{-1} = \textbf{B}^\dagger + (\textbf{w}_1' \textbf{C} \textbf{w}_1)^{-1} \textbf{w}_1 \textbf{w}_1' (\textbf{I}_p - \textbf{C} \textbf{B}^\dagger),
\end{align*}
where $\textbf{B}^\dagger = \sum_{j=2}^p \lambda^{-1}_j \textbf{w}_j \textbf{w}_j$ is the Moore-Penrose pseudoinverse of the matrix $\textbf{B} := \sum_{j=2}^p \lambda_j \textbf{w}_j \textbf{w}_j$.
\end{lemma}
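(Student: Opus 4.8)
The plan is to verify directly that the matrix on the right-hand side, call it $\textbf{D} := \textbf{B}^\dagger + c^{-1} \textbf{w}_1 \textbf{w}_1' (\textbf{I}_p - \textbf{C} \textbf{B}^\dagger)$ with $c := \textbf{w}_1' \textbf{C} \textbf{w}_1$, is a right inverse of $\textbf{A}$. Since $\textbf{A}$ and $\textbf{D}$ are both $p \times p$, the identity $\textbf{A} \textbf{D} = \textbf{I}_p$ automatically implies that $\textbf{A}$ is invertible and that $\textbf{A}^{-1} = \textbf{D}$, so nothing further is needed. Note first that $c > 0$ because $\textbf{C}$ is positive definite and $\textbf{w}_1 \neq \textbf{0}$, so $c^{-1}$ is well defined; also, the displayed expression for $\textbf{B}^\dagger$ presupposes (and the asserted invertibility of $\textbf{A}$ indeed requires) that $\lambda_2, \ldots, \lambda_p$ are all nonzero, since if $\lambda_j = 0$ then $\textbf{w}_j' \textbf{A} = \textbf{w}_j' \textbf{w}_1 \textbf{w}_1' \textbf{C} = \textbf{0}'$ by orthonormality.

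Next I would introduce the orthogonal projections $\textbf{P} := \textbf{w}_1 \textbf{w}_1'$ onto the line spanned by $\textbf{w}_1$ and $\textbf{Q} := \textbf{I}_p - \textbf{P} = \sum_{j=2}^p \textbf{w}_j \textbf{w}_j'$ onto its complement, and record the elementary consequences of the orthonormality of $\textbf{w}_1, \ldots, \textbf{w}_p$: writing $\textbf{B} := \sum_{j=2}^p \lambda_j \textbf{w}_j \textbf{w}_j'$, one has $\textbf{B} \textbf{P} = \textbf{0}$, $\textbf{B} \textbf{B}^\dagger = \textbf{Q}$, $\textbf{P} \textbf{B}^\dagger = \textbf{0}$, and the key scalar identity $\textbf{P} \textbf{C} \textbf{P} = (\textbf{w}_1' \textbf{C} \textbf{w}_1)\, \textbf{w}_1 \textbf{w}_1' = c\, \textbf{P}$. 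In this notation $\textbf{A} = \textbf{B} + \textbf{P} \textbf{C}$ and $\textbf{D} = \textbf{B}^\dagger + c^{-1} \textbf{P} - c^{-1} \textbf{P} \textbf{C} \textbf{B}^\dagger$.

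Finally I would expand $\textbf{A} \textbf{D}$ term by term. The two products in which $\textbf{B}$ meets $\textbf{P}$ vanish, the product $\textbf{B} \textbf{B}^\dagger$ equals $\textbf{Q}$, and the three products arising from $\textbf{P}\textbf{C}$ simplify through $\textbf{P}\textbf{C}\textbf{P} = c\textbf{P}$: one obtains $c^{-1} \textbf{P} \textbf{C} \textbf{P} = \textbf{P}$ and $c^{-1} \textbf{P} \textbf{C} \textbf{P} \textbf{C} \textbf{B}^\dagger = \textbf{P} \textbf{C} \textbf{B}^\dagger$, and the latter cancels the term $\textbf{P}\textbf{C}\textbf{B}^\dagger$. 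Collecting the survivors leaves $\textbf{A}\textbf{D} = \textbf{Q} + \textbf{P} = \textbf{I}_p$, which completes the proof. There is no genuine obstacle here: the argument is pure bookkeeping, the only points requiring attention being the standing nonvanishing of the $\lambda_j$ and the cancellation just described; as a consistency check one can run the identical manipulation on $\textbf{D}\textbf{A}$ and again recover $\textbf{I}_p$.
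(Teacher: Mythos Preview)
Your proposal is correct and follows essentially the same approach as the paper: both verify directly that the product $\textbf{A}\textbf{D}$ equals $\textbf{I}_p$ using the identities $\textbf{B}\textbf{B}^\dagger = \textbf{I}_p - \textbf{w}_1\textbf{w}_1'$, $\textbf{B}\textbf{w}_1 = \textbf{0}$, and $\textbf{w}_1\textbf{w}_1'\textbf{C}\textbf{w}_1\textbf{w}_1' = c\,\textbf{w}_1\textbf{w}_1'$. Your version is slightly tidier in that you invoke the square-matrix fact that a one-sided inverse is automatically two-sided (the paper instead remarks that the product $\textbf{D}\textbf{A}$ can be checked similarly), and you make explicit the implicit hypothesis $\lambda_j \neq 0$.
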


\begin{proof}[Proof of Lemma \ref{lem:full_rank_1_inversion}]
    Observe first that $\textbf{B} \textbf{B}^\dagger = \textbf{B}^\dagger \textbf{B} = \textbf{I}_p - \textbf{w}_1 \textbf{w}_1'$. Then, we compute the product of the two matrices to be,
    \begin{align*}
        & (\textbf{B} +  \textbf{w}_1 \textbf{w}_1' \textbf{C})\left\{ \textbf{B}^\dagger + (\textbf{w}_1' \textbf{C} \textbf{w}_1)^{-1} \textbf{w}_1 \textbf{w}_1' (\textbf{I}_p - \textbf{C} \textbf{B}^\dagger) \right\} \\
        =& \textbf{I}_p - \textbf{w}_1 \textbf{w}_1' + \textbf{w}_1 \textbf{w}_1' \textbf{C}  \textbf{B}^\dagger + \textbf{w}_1 \textbf{w}_1' (\textbf{I}_p - \textbf{C} \textbf{B}^\dagger)\\
        =& \textbf{I}_p.
    \end{align*}
    The opposite product can be verified to equal identity in a similar manner, proving the claim.
\end{proof}

\begin{lemma}\label{lem:normal_moments}
    Let $\textbf{z} \sim \mathcal{N}_p (c \textbf{v}, \textbf{I}_p)$, for some $c \in \mathbb{R}$ and $\textbf{v} \in \mathbb{R}^p$. Then $\mathrm{E}\{ (\textbf{v}'\textbf{z})^k \textbf{z} \} = \| \textbf{v} \|^{-2} \mathrm{E}\{ (\textbf{v}'\textbf{z})^{k + 1} \} \textbf{v}$ and $\mathrm{E}\{ (\textbf{v}'\textbf{z})^k \textbf{z} \textbf{z}' \} = \mathrm{E}\{ (\textbf{v}'\textbf{z})^{k} \} (\textbf{I}_p - \| \textbf{v} \|^{-2} \textbf{v} \textbf{v}') +  \| \textbf{v} \|^{-4} \mathrm{E}\{ (\textbf{v}'\textbf{z})^{k + 2} \} \textbf{v} \textbf{v}'$.
\end{lemma}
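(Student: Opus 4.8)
The plan is to exploit the orthogonal decomposition of $\textbf{z}$ relative to the line spanned by $\textbf{v}$. Assume $\textbf{v} \neq \textbf{0}$ (otherwise the quantities $\| \textbf{v} \|^{-2}$, $\| \textbf{v} \|^{-4}$ are undefined and there is nothing to prove) and set $\textbf{P} := \| \textbf{v} \|^{-2} \textbf{v} \textbf{v}'$, the orthogonal projection onto $\mathrm{span}(\textbf{v})$, so that $\textbf{z} = \textbf{P}\textbf{z} + (\textbf{I}_p - \textbf{P})\textbf{z}$ with $\textbf{P}\textbf{z} = \| \textbf{v} \|^{-2}(\textbf{v}'\textbf{z})\textbf{v}$. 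The structural facts I would record first are: (i) since $\textbf{z}$ is Gaussian with covariance $\textbf{I}_p$, the vectors $\textbf{P}\textbf{z}$ and $(\textbf{I}_p - \textbf{P})\textbf{z}$ are jointly Gaussian and uncorrelated, hence independent; (ii) $\textbf{v}'\textbf{z}$ is a measurable function of $\textbf{P}\textbf{z}$ alone, hence independent of $(\textbf{I}_p - \textbf{P})\textbf{z}$; and (iii) since $\textbf{P}\textbf{v} = \textbf{v}$, one has $\mathrm{E}\{ (\textbf{I}_p - \textbf{P})\textbf{z} \} = (\textbf{I}_p - \textbf{P})(c\textbf{v}) = \textbf{0}$ and $\mathrm{Cov}\{ (\textbf{I}_p - \textbf{P})\textbf{z} \} = (\textbf{I}_p - \textbf{P})\textbf{I}_p(\textbf{I}_p - \textbf{P}) = \textbf{I}_p - \textbf{P}$.

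For the first identity I would split $\mathrm{E}\{ (\textbf{v}'\textbf{z})^k \textbf{z} \} = \mathrm{E}\{ (\textbf{v}'\textbf{z})^k \textbf{P}\textbf{z} \} + \mathrm{E}\{ (\textbf{v}'\textbf{z})^k (\textbf{I}_p - \textbf{P})\textbf{z} \}$. By (ii) the second term factorizes into $\mathrm{E}\{ (\textbf{v}'\textbf{z})^k \} \mathrm{E}\{ (\textbf{I}_p - \textbf{P})\textbf{z} \}$, which vanishes by (iii); and substituting $\textbf{P}\textbf{z} = \| \textbf{v} \|^{-2}(\textbf{v}'\textbf{z})\textbf{v}$ in the first term gives exactly $\| \textbf{v} \|^{-2}\mathrm{E}\{ (\textbf{v}'\textbf{z})^{k+1} \}\textbf{v}$, as claimed.

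For the second identity I would expand $\textbf{z}\textbf{z}' = \textbf{P}\textbf{z}\textbf{z}'\textbf{P} + \textbf{P}\textbf{z}\textbf{z}'(\textbf{I}_p - \textbf{P}) + (\textbf{I}_p - \textbf{P})\textbf{z}\textbf{z}'\textbf{P} + (\textbf{I}_p - \textbf{P})\textbf{z}\textbf{z}'(\textbf{I}_p - \textbf{P})$, multiply by $(\textbf{v}'\textbf{z})^k$, and take expectations term by term. The two cross terms vanish: e.g. $\mathrm{E}\{ (\textbf{v}'\textbf{z})^k \textbf{P}\textbf{z}\textbf{z}'(\textbf{I}_p - \textbf{P}) \}$ factorizes by (ii) as $\mathrm{E}\{ (\textbf{v}'\textbf{z})^k \textbf{P}\textbf{z} \}\,\mathrm{E}\{ \textbf{z}'(\textbf{I}_p - \textbf{P}) \} = \textbf{0}$ by (iii), and symmetrically for the other. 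The first term equals $\| \textbf{v} \|^{-4}\mathrm{E}\{ (\textbf{v}'\textbf{z})^{k+2} \}\textbf{v}\textbf{v}'$ after the same substitution, while the last term factorizes by (ii) into $\mathrm{E}\{ (\textbf{v}'\textbf{z})^k \}\,\mathrm{E}\{ (\textbf{I}_p - \textbf{P})\textbf{z}\textbf{z}'(\textbf{I}_p - \textbf{P}) \} = \mathrm{E}\{ (\textbf{v}'\textbf{z})^k \}(\textbf{I}_p - \textbf{P})$ by (iii). Adding these and writing $\textbf{P} = \| \textbf{v} \|^{-2}\textbf{v}\textbf{v}'$ produces the stated formula.

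I do not anticipate a genuine obstacle here: the lemma is essentially bookkeeping once the independence in (i) is in place, and that is the only point where Gaussianity (beyond mere existence of the relevant moments) is used. The only matters requiring a little care are to note that all moments $\mathrm{E}\{ (\textbf{v}'\textbf{z})^j \}$ with $j \le k+2$ are finite — automatic for a Gaussian — so that the termwise expectations and the factorizations through independence are legitimate.
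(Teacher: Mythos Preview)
Your proof is correct and essentially matches the paper's argument: the paper conditions on $\textbf{v}'\textbf{z}$ and uses the Gaussian conditional distribution $\textbf{z}\mid \textbf{v}'\textbf{z}=s \sim \mathcal{N}_p(\|\textbf{v}\|^{-2}s\,\textbf{v},\ \textbf{I}_p-\|\textbf{v}\|^{-2}\textbf{v}\textbf{v}')$, which is exactly your independence statement (i)--(iii) rephrased via conditioning. The subsequent computations coincide, so the only difference is cosmetic --- you invoke independence of $\textbf{P}\textbf{z}$ and $(\textbf{I}_p-\textbf{P})\textbf{z}$ directly, whereas the paper routes the same fact through the tower property.
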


\begin{proof}[Proof of Lemma \ref{lem:normal_moments}]
    The conditional distribution of $\textbf{z}$ given $\textbf{v}'\textbf{z}$ is
    \begin{align*}
        \textbf{z} \mid \textbf{v}'\textbf{z} = s \sim \mathcal{N}(\| \textbf{v} \|^{-2} s \textbf{v}, \textbf{I}_p - \| \textbf{v} \|^{-2} \textbf{v} \textbf{v}').
    \end{align*}
    Thus,
    \begin{align*}
        \mathrm{E}\{ (\textbf{v}'\textbf{z})^k \textbf{z} \} = \mathrm{E}[ \mathrm{E}\{ (\textbf{v}'\textbf{z})^k \textbf{z} \mid \textbf{v}'\textbf{z} \} ] = \mathrm{E}[  (\textbf{v}'\textbf{z})^k \mathrm{E}\{ \textbf{z} \mid \textbf{v}'\textbf{z} \} ] = \| \textbf{v} \|^{-2} \mathrm{E}\{ (\textbf{v}'\textbf{z})^{k + 1} \} \textbf{v}.
    \end{align*}
    The second claim is shown analogously and by using the fact that $\mathrm{E}(\textbf{z} \textbf{z}' \mid \textbf{v}' \textbf{z}) = \mathrm{Cov}(\textbf{z} \mid \textbf{v}' \textbf{z}) + \mathrm{E}(\textbf{z} \mid \textbf{v}' \textbf{z}) \mathrm{E}(\textbf{z}' \mid \textbf{v}' \textbf{z})$.
\end{proof}

\begin{proof}[Proof of Lemma \ref{lem:kurtosis_fisher}]
    The distribution of the projection $ \textbf{u}' \tilde{\textbf{x}} $ is
    \begin{align*}
        \textbf{u}' \tilde{\textbf{x}} \sim \alpha_1 \mathcal{N}_p(-\alpha_2 t , g) + \alpha_2 \mathcal{N}_p(\alpha_1 t, g),
    \end{align*}
    where $t := \textbf{u}'\textbf{h}$, $g := \textbf{u}'\boldsymbol{\Sigma}\textbf{u}$ and $\textbf{h} := \boldsymbol{\mu}_2 - \boldsymbol{\mu}_1$. By the moment formulas of univariate normal distribution, $\mathrm{E} \{ ( \textbf{u}' \tilde{\textbf{x}} )^2 \} = g + \beta t^2$, where $\beta := \alpha_1 \alpha_2$. Similarly, $ \mathrm{E} \{ ( \textbf{u}' \tilde{\textbf{x}} )^4 \} = \beta (\alpha_1^3 + \alpha_2^3) t^4 + 6 \beta t^2 g + 3 g^2$ which can be further simplified by noting that $\alpha_1^3 + \alpha_2^3 = 1 - 3 \beta$. Hence,
    \begin{align}\label{eq:kappa_expression}
        \{ \kappa(\textbf{u}) - 3 \}^2 = \beta^2 (1 - 6 \beta)^2 \frac{f^4}{(1 + \beta f)^{4}},
    \end{align}
    where $f := t^2/g \geq 0$.

    If $\alpha_1 \in \{ \delta_1, \delta_2 \}$, then $1 - 6 \beta = 0$ making $\{ \kappa(\textbf{u}) - 3 \}^2 = 0$. Assume then that $\alpha_1 \notin \{ \delta_1, \delta_2 \}$, implying that $(1 - 6 \beta)^2 > 0$. The derivative of the map $x \mapsto x^4/( 1 + \beta x)^{4}$ is $4x^3/( 1 + \beta x)^{5}$, showing that the map is strictly increasing in $(0, \infty)$. Hence, $\{ \kappa(\textbf{u}) - 3 \}^2$ is maximal when $f$ is at its largest. Now,
    \begin{align*}
        f = \frac{t^2}{g} = \left\{ \left( \frac{\boldsymbol{\Sigma}^{1/2}\textbf{u}}{\| \boldsymbol{\Sigma}^{1/2}\textbf{u} \|} \right)' \boldsymbol{\Sigma}^{-1/2} \textbf{h} \right\}^2,
    \end{align*}
    showing that, by the Cauchy-Schwarz inequality, $f$ is maximal if and only if $\boldsymbol{\Sigma}^{1/2}\textbf{u} \propto \boldsymbol{\Sigma}^{-1/2} \textbf{h}$, i.e., when $\textbf{u} = \pm \boldsymbol{\theta}/\| \boldsymbol{\theta} \|$ (where $\boldsymbol{\theta} = \boldsymbol{\Sigma}^{-1} \textbf{h}$).
\end{proof}

\begin{proof}[Proof of Theorem \ref{theo:asnorm_blind}]
    The objective functions are translation invariant, meaning that we may, without loss of generality, assume that $\mathrm{E}(\textbf{x}) = \textbf{0}$. This makes the marginal distribution of $\textbf{x}$ be $\textbf{x} \sim \alpha_1 \mathcal{N}_p(-\alpha_2 \textbf{h}, \boldsymbol{\Sigma}) + \alpha_2 \mathcal{N}_p(\alpha_1 \textbf{h}, \boldsymbol{\Sigma})$, where $\textbf{h} := \boldsymbol{\mu}_2 - \boldsymbol{\mu}_1$.

    The strong consistency of the estimator can be shown in the usual way by establishing that the objective function is strongly uniformly convergent in the compact parameter set $\mathbb{S}^{p - 1}$ (or, more precisely, in its subset where the sign of the estimator is fixed), that is,
    \begin{align}\label{eq:uniform_consistency}
        \sup_{\textbf{u} \in \mathbb{S}^{p - 1}} | \{ \kappa_n(\textbf{u}) - 3 \}^2 - \{ \kappa(\textbf{u}) - 3 \}^2 | \rightarrow 0, \quad \mathrm{a.s.}
    \end{align}
    For simplicity, we give the proof of the uniform convergence only in Theorem \ref{theo:asnorm_blind_2}, in the context of skewness (having lower moments than kurtosis), and similar (but lengthier) argments can be used to show \eqref{eq:uniform_consistency}.

    To show the limiting normality, note that the Largrangian corresponding to the optimization problem is $\ell_n(\textbf{u}) = \{ \kappa_{n}(\textbf{u}) - 3 \}^2 + \lambda_n (\textbf{u}' \textbf{u} - 1)$ where $\lambda_n$ is the Lagrangian multiplier. Using some matrix calculus, the corresponding gradient is seen to be
    \begin{align*}
        \nabla \ell_n (\textbf{u}) = \frac{8}{\tilde{s}_{n2}(\textbf{u})^3} \{ \kappa_{n}(\textbf{u}) - 3 \} \{ \tilde{s}_{n2}(\textbf{u}) \tilde{\textbf{m}}_{n3}(\textbf{u}) - \tilde{s}_{n4}(\textbf{u}) \tilde{\textbf{m}}_{n1}(\textbf{u}) \} - 2 \lambda_n \textbf{u},
    \end{align*}
    where $\tilde{s}_{nk}(\textbf{u}) := (1/n) \sum_i (\textbf{u}' \tilde{\textbf{x}}_i)^k$ and $\tilde{\textbf{m}}_{nk}(\textbf{u}) := (1/n) \sum_i (\textbf{u}' \tilde{\textbf{x}}_i)^{k} \tilde{\textbf{x}}_i$. The gradient vanishes at the (sign-adjusted) sample maximum $s_n \textbf{u}_n$ and multiplication of the gradient from the left with $s_n \textbf{u}_n'$ thus yields that $ 0 = s_n \textbf{u}_n' \nabla \ell_n (s_n \textbf{u}_n) = - 2 \lambda_n $, showing that $\lambda_n = 0$.

    We next work on the level of individual probability elements $\omega \in \Omega$. By Lemma \ref{lem:kurtosis_fisher}, LLN and the strong consistency of $s_n \textbf{u}_n$, there exists a probability one set $\mathcal{H}$ such that $s_n \textbf{u}_n \rightarrow \textbf{u}_0$ and $ \kappa_{n}(\textbf{u}) - 3 \rightarrow t \neq 0$ for all $\omega \in \mathcal{H}$. Thus, for each $\omega \in \mathcal{H}$, the maximizer $u_n$ satisfies, for $n$ large enough, the estimating equation $ \tilde{s}_{n2}(s_n \textbf{u}_n) \tilde{\textbf{m}}_{n3}(s_n \textbf{u}_n) - \tilde{s}_{n4}(s_n \textbf{u}_n) \tilde{\textbf{m}}_{n1}(s_n \textbf{u}_n) = \textbf{0}$. Using Lagrangian multipliers we can similarly show that the population maximizer $\textbf{u}_0 := \boldsymbol{\theta}/\| \boldsymbol{\theta} \|$ satisfies $ s_2(\textbf{u}_0) \textbf{m}_3(\textbf{u}_0) - s_4(\textbf{u}_0) \textbf{m}_1(\textbf{u}_0) = 0$, where $s_k(\textbf{u}) = \mathrm{E}\{ (\textbf{u}' \textbf{x})^k \}$ and $\textbf{m}_k(\textbf{u}) = \mathrm{E}\{ (\textbf{u}' \textbf{x})^k \textbf{x} \}$.

    Let $g_{n\kappa}: \mathbb{R}^p \setminus \{ \textbf{0} \} \to \mathbb{R}^{}$ be such that $g_{n\kappa}(\textbf{u}) =  \tilde{s}_{n2}(\textbf{u}) \tilde{\textbf{m}}_{n3}(\textbf{u}) - \tilde{s}_{n4}(\textbf{u}) \tilde{\textbf{m}}_{n1}(\textbf{u}) $. For each $\omega \in \mathcal{H}$, we have, for $n$ large enough, the Taylor expansion
    \begin{align*}
        g_{n\kappa}(s_n \textbf{u}_n) = g_{n\kappa}(\textbf{u}_0) + \nabla g_{n\kappa}(\textbf{u}_0)(s_n \textbf{u}_n - \textbf{u}_0) + \{ (s_n \textbf{u}_n - \textbf{u}_0)' \times  \nabla' \nabla g_{n\kappa}(\tilde{\textbf{u}}_n) \} (s_n \textbf{u}_n - \textbf{u}_0),
    \end{align*}
    where $ \nabla' \nabla g_{n\kappa}(\tilde{\textbf{u}}_n)  $ is the third order tensor of second derivatives of $g$, the symbol $\times$ denotes the vector-by-tensor multiplication (producing a matrix) and $\tilde{\textbf{u}}_n$ satisfies $\| \tilde{\textbf{u}}_n - \textbf{u}_0 \| \leq \| \textbf{u}_n - \textbf{u}_0 \|$, implying that $ \tilde{\textbf{u}}_n \rightarrow \textbf{u}_0 $. Multiplying the expansion by $\sqrt{n}$ and using the fact that $g_{n\kappa}(s_n \textbf{u}_n) = \textbf{0}$ gives that
    \begin{align}\label{eq:reordered_taylor}
        \{ (s_n \textbf{u}_n - \textbf{u}_0)' \times  \nabla' \nabla g_{n\kappa}(\tilde{\textbf{u}}_n) + \nabla g_{n\kappa}(\textbf{u}_0) \}\sqrt{n}(s_n \textbf{u}_n - \textbf{u}_0) = \sqrt{n}  g_{n\kappa}(\textbf{u}_0).
    \end{align}
    Now, the elements of $ \nabla' \nabla g_{n\kappa}(\textbf{u})$ are polynomials of the sample moments of $\tilde{\textbf{x}}_i$ and the elements of $\textbf{u}$ implying that, by LLN, $\nabla' \nabla g_{n\kappa}(\tilde{\textbf{u}}_n)$ converges to a constant and $(s_n \textbf{u}_n - \textbf{u}_0)' \times  \nabla' \nabla g_{n\kappa}(\tilde{\textbf{u}}_n)$ converges to zero, for any $\omega \in \mathcal{H}$. Now, by the unit lengths of $s_n \textbf{u}_n$ and $\textbf{u}_0$, we have $c_0 \textbf{h} (s_n \textbf{u}_n + \textbf{u}_0)' \sqrt{n}(s_n \textbf{u}_n - \textbf{u}_0) = 0 $, where $c_0 := (1/2) \{ 3 s_2 ( \textbf{u}_0)^2 - s_4(\textbf{u}_0) \} \| \boldsymbol{\theta} \| (\textbf{h}' \boldsymbol{\Sigma}^{-1} \textbf{h})^{-1}$ (the inclusion of the constant $c_0$ simplifies things later on). Summing this with equation \eqref{eq:reordered_taylor} gives
    \begin{align*}
        \{ (s_n \textbf{u}_n - \textbf{u}_0)' \times  \nabla' \nabla g_{n\kappa}(\tilde{\textbf{u}}_n) + \nabla g_{n\kappa}(\textbf{u}_0) + c_0 \textbf{h} (s_n \textbf{u}_n + \textbf{u}_0)' \}\sqrt{n}(s_n \textbf{u}_n - \textbf{u}_0) = \sqrt{n}  g_{n\kappa}(\textbf{u}_0).
    \end{align*}
    Assume now for a moment that $\nabla g_{n\kappa}(\textbf{u}_0) + c_0 \textbf{h} (s_n \textbf{u}_n + \textbf{u}_0)'$ converges to a full-rank matrix $\textbf{G} \in \mathbb{R}^{p \times p}$. Then, for $n$ large enough, we have,
    \begin{align}
        \sqrt{n}(s_n \textbf{u}_n - \textbf{u}_0) = \{ (s_n \textbf{u}_n - \textbf{u}_0)' \times  \nabla' \nabla g_{n\kappa}(\tilde{\textbf{u}}_n) + \nabla g_{n\kappa}(\textbf{u}_0) + c_0 \textbf{h} (s_n \textbf{u}_n + \textbf{u}_0)' \}^{-1} \sqrt{n}  g_{n\kappa}(\textbf{u}_0).
    \end{align}
    Hence, assuming further that we have $\sqrt{n}  g_{n\kappa}(\textbf{u}_0) \rightsquigarrow \mathcal{N}_p(\textbf{0}, \boldsymbol{\Pi})$, then the limiting distribution of $s_n \textbf{u}_n$ is, by Slutsky's theorem,
    \begin{align}\label{eq:limiting_dist_kurtosis}
        \sqrt{n}(s_n \textbf{u}_n - \textbf{u}_0) \rightsquigarrow \mathcal{N}_p \{ \textbf{0}, \textbf{G}^{-1} \boldsymbol{\Pi} (\textbf{G}^{-1})' \}.
    \end{align}
    Thus, to complete the proof, we next derive expressions for $\textbf{G}$ and $\boldsymbol{\Pi}$ (and show that the former has indeed full rank).

    The Jacobian of $g_{n\kappa}$ is,
    \begin{align*}
        \nabla g_{n\kappa}(\textbf{u}_0) = 2 \tilde{\textbf{m}}_{n1}(\textbf{u}_0) \tilde{\textbf{m}}_{n3}(\textbf{u}_0)' + 3 \tilde{s}_{n2}(\textbf{u}_0) \tilde{\textbf{G}}_{n2}(\textbf{u}_0) - 4 \tilde{\textbf{m}}_{n3}(\textbf{u}_0) \tilde{\textbf{m}}_{n1}(\textbf{u}_0)' - \tilde{s}_{n4}(\textbf{u}_0) \tilde{\textbf{G}}_{n0}(\textbf{u}_0),
    \end{align*}
    where $\tilde{\textbf{G}}_{nk}(\textbf{u}) := (1/n) \sum_i (\textbf{u}' \tilde{\textbf{x}}_i)^k  \tilde{\textbf{x}}_i \tilde{\textbf{x}}_i'$. Thus, by LLN and using the population level estimating equation, ${s}_{2}(\textbf{u}_0) {\textbf{m}}_{3}(\textbf{u}_0) = {s}_{4}(\textbf{u}_0) {\textbf{m}}_{1}(\textbf{u}_0)$, we get
    \begin{align}\label{eq:hessian_lln}
        \nabla g_{n\kappa}(\textbf{u}_0) \rightarrow_p = -2 \frac{s_4(\textbf{u}_0)}{s_2(\textbf{u}_0)} \textbf{m}_{1}(\textbf{u}_0) \textbf{m}_{1}(\textbf{u}_0)' + 3 s_{2}(\textbf{u}_0) \textbf{G}_{2}(\textbf{u}_0) - s_{4}(\textbf{u}_0) \textbf{G}_{0}(\textbf{u}_0),
    \end{align}
    where $\textbf{G}_k(\textbf{u}) := \mathrm{E}\{ (\textbf{u}' \textbf{x})^k \textbf{x} \textbf{x}' \}$. Denote next $\tau := \textbf{h}' \boldsymbol{\Sigma}^{-1} \textbf{h}$.


    To compute the moments $\textbf{m}_{k}(\textbf{u}_0)$ and $\textbf{G}_{k}(\textbf{u}_0)$, we use Lemma \ref{lem:normal_moments}. The former satisfies $ \boldsymbol{\Sigma}^{-1/2} \textbf{m}_{k}(\textbf{u}_0) = \| \boldsymbol{\theta} \|^{-k} \mathrm{E}\{ (\textbf{v}' \textbf{z})^k \textbf{z} \}$, where $\textbf{v} := \boldsymbol{\Sigma}^{-1/2} \textbf{h}$ and $\textbf{z} \sim \alpha_1 \mathcal{N}_p(-\alpha_2 \textbf{h}, \textbf{I}_p) + \alpha_2 \mathcal{N}_p(\alpha_1 \textbf{h}, \textbf{I}_p)$. Denoting the components of the mixture by $\textbf{z}_1$ and $\textbf{z}_2$, we have, by the first part of Lemma \ref{lem:normal_moments}, for $\textbf{z}_1$ that $\mathrm{E}\{ (\textbf{v}' \textbf{z}_1)^k \textbf{z}_1 \} = \| \textbf{v} \|^{-2} \mathrm{E}\{ (\textbf{v}'\textbf{z}_1)^{k + 1} \} \textbf{v}$, and similarly for $\textbf{z}_2$. Hence, $\boldsymbol{\Sigma}^{-1/2} \textbf{m}_{k}(\textbf{u}_0) = \| \boldsymbol{\theta} \|^{-k} \| \textbf{v} \|^{-2} \mathrm{E}\{ (\textbf{v}'\textbf{z})^{k + 1} \} \textbf{v}$. Finally, since $s_{k}(\textbf{u}_0) = \| \boldsymbol{\theta} \|^{-k} \mathrm{E}\{ (\textbf{v}'\textbf{z})^{k} \}$, we get
    \begin{align}\label{eq:formula_m}
        \textbf{m}_{k}(\textbf{u}_0) = \| \boldsymbol{\theta} \| \| \textbf{v} \|^{-2} s_{k + 1}(\textbf{u}_0) \boldsymbol{\Sigma}^{1/2} \textbf{v} = \| \boldsymbol{\theta} \| \tau^{-1} s_{k + 1}(\textbf{u}_0) \textbf{h}.
    \end{align}

    For $\textbf{G}_{k}(\textbf{u}_0)$, we have, using the same notation, that $$
    \boldsymbol{\Sigma}^{-1/2} \textbf{G}_{k}(\textbf{u}_0) \boldsymbol{\Sigma}^{-1/2} = \| \boldsymbol{\theta} \|^{-k} \mathrm{E}\{ (\textbf{v}' \textbf{z})^k \textbf{z} \textbf{z}' \}.
    $$
    The second part of Lemma \ref{lem:normal_moments} then shows that
    \begin{align}\label{eq:formula_G}
    \begin{split}
         \textbf{G}_{k}(\textbf{u}_0) &= \| \boldsymbol{\theta} \|^{-k} \boldsymbol{\Sigma}^{1/2} [ \mathrm{E}\{ (\textbf{v}'\textbf{z})^{k} \} (\textbf{I}_p - \| \textbf{v} \|^{-2} \textbf{v} \textbf{v}') +  \| \textbf{v} \|^{-4} \mathrm{E}\{ (\textbf{v}'\textbf{z})^{k + 2} \} \textbf{v} \textbf{v}' ] \boldsymbol{\Sigma}^{1/2} \\
         &= \| \boldsymbol{\theta} \|^{-k} [ \| \boldsymbol{\theta} \|^k s_{k}(\textbf{u}_0) (\boldsymbol{\Sigma} - \tau^{-1} \textbf{h} \textbf{h}') + \tau^{-2} \| \boldsymbol{\theta} \|^{k+2} s_{k+2}(\textbf{u}_0) \textbf{h} \textbf{h}' ] \\
         &=  s_{k}(\textbf{u}_0) \boldsymbol{\Sigma} + \tau^{-1} \{ \tau^{-1} \| \boldsymbol{\theta} \|^{2} s_{k+2}(\textbf{u}_0) -  s_{k}(\textbf{u}_0) \} \textbf{h} \textbf{h}'.
    \end{split}
    \end{align}
    Plugging in the expressions to \eqref{eq:hessian_lln}, we get $ \nabla g_{n\kappa}(\textbf{u}_0) \rightarrow_p (3 {s}_2^2 - {s}_4) \boldsymbol{\Sigma}^{1/2} (\textbf{I}_p - \textbf{w} \textbf{w}') \boldsymbol{\Sigma}^{1/2} $, where $\textbf{w} := \boldsymbol{\Sigma}^{-1/2} \textbf{h}/ \| \boldsymbol{\Sigma}^{-1/2} \textbf{h} \|$ and $s_k \equiv s_{k}(\textbf{u}_0)$. Moreover, we also have $ c_0 \textbf{h} (s_n \textbf{u}_n + \textbf{u}_0)' \rightarrow_p (3 {s}_2^2 - {s}_4) \boldsymbol{\Sigma}^{1/2} \textbf{w} \textbf{w}' \boldsymbol{\Sigma}^{-1} \boldsymbol{\Sigma}^{1/2} $. Now $\textbf{G}$ is the sum of these two, giving,
    \begin{align*}
        \textbf{G} = (3 {s}_2^2 - {s}_4) \boldsymbol{\Sigma}^{1/2} (\textbf{I}_p - \textbf{w} \textbf{w}' + \textbf{w} \textbf{w}' \boldsymbol{\Sigma}^{-1}) \boldsymbol{\Sigma}^{1/2}.
    \end{align*}
    The invertibility of $\textbf{G}$ now follows from Lemma \ref{lem:full_rank_1_inversion}, which also gives
    \begin{align*}
        (\textbf{I}_p - \textbf{w} \textbf{w}' + \textbf{w} \textbf{w}' \boldsymbol{\Sigma}^{-1})^{-1} &= \textbf{I}_p + (\textbf{w}' \boldsymbol{\Sigma}^{-1} \textbf{w})^{-1} \textbf{w} \textbf{w}' (\textbf{I}_p - \boldsymbol{\Sigma}^{-1})\\
        &= \textbf{I}_p + \| \boldsymbol{\theta} \|^{-2} \boldsymbol{\Sigma}^{-1/2} \textbf{h} \textbf{h}' \boldsymbol{\Sigma}^{-1/2} (\textbf{I}_p - \boldsymbol{\Sigma}^{-1}).
    \end{align*}
    Finally, this makes the inverse of $\textbf{G}$ be,
    \begin{align*}
        \textbf{G}^{-1} = \frac{1}{3 {s}_2^2 - {s}_4} \left\{ \boldsymbol{\Sigma}^{-1} + \frac{1}{\| \boldsymbol{\theta} \|^2} \boldsymbol{\theta} \boldsymbol{\theta}' (\textbf{I}_p - \boldsymbol{\Sigma}^{-1})  \right\}.
    \end{align*}
    The fact that $3 {s}_2^2 - {s}_4 \neq 0$ follows from the formulas for $s_k$ given later in the proof.

    We next obtain the limiting distribution of $$
    \sqrt{n}  g_{n\kappa}(\textbf{u}_0) = \sqrt{n} \{ \tilde{s}_{n2}(\textbf{u}_0) \tilde{\textbf{m}}_{n3}(\textbf{u}_0) - \tilde{s}_{n4}(\textbf{u}_0) \tilde{\textbf{m}}_{n1}(\textbf{u}_0) \}.
    $$
    Define non-centered counterparts for the sample moments as ${s}_{nk}(\textbf{u}) := (1/n) \sum_i (\textbf{u}' {\textbf{x}}_i)^k$ and ${\textbf{m}}_{nk}(\textbf{u}) := (1/n) \sum_i (\textbf{u}' {\textbf{x}}_i)^{k} {\textbf{x}}_i$. Then, LLN together with the calculus of $o_p(1)$ and $O_p(1)$ sequences shows that $\tilde{s}_{n2}(\textbf{u}) = {s}_{n2}(\textbf{u}) + o_p(1/\sqrt{n})$ and $\tilde{\textbf{m}}_{n1}(\textbf{u}) = {\textbf{m}}_{n1}(\textbf{u}) + o_p(1/\sqrt{n})$. However, the same equivalence does not hold for the terms $\tilde{\textbf{m}}_{n3}(\textbf{u})$ and $\tilde{s}_{n4}(\textbf{u})$ but we instead have
    \begin{align*}
        \tilde{\textbf{m}}_{n3}(\textbf{u}) = {\textbf{m}}_{n3}(\textbf{u}) - 3 {s}_{n1}(\textbf{u}) \textbf{m}_{2}(\textbf{u}) - {s}_{3}(\textbf{u}) {\textbf{m}}_{n0}(\textbf{u}) + o_p(1/\sqrt{n}),
    \end{align*}
    and
    \begin{align*}
        \tilde{s}_{n4}(\textbf{u}) = {s}_{n4}(\textbf{u}) - 4 {s}_{3}(\textbf{u}) {s}_{n1}(\textbf{u}) + o_p(1/\sqrt{n}).
    \end{align*}
    Using these, we expand $\sqrt{n}  g_{n\kappa}(\textbf{u}_0)$ to be (dropping $\textbf{u}_0$ from the notation),
    \begin{align}\label{eq:g_kappa_expansion}
        \sqrt{n} g_{n\kappa} =& \sqrt{n} ( {s}_{n2} - s_2 ) \textbf{m}_3 + s_2 \sqrt{n} (\textbf{m}_{n3} - \textbf{m}_3) - \sqrt{n} ( {s}_{n4} - s_4 ) \textbf{m}_1 - s_4 \sqrt{n} (\textbf{m}_{n1} - \textbf{m}_1)\nonumber\\
        &+ (4 s_3 \textbf{m}_1 - 3 s_2 \textbf{m}_2) \sqrt{n} s_{n1} - s_2 s_3 \sqrt{n} \textbf{m}_{n0}.
    \end{align}
    Hence, by CLT, $\sqrt{n} g_{n\kappa}$ has a limiting normal distribution with the covariance matrix,
    \begin{align*}
        \boldsymbol{\Pi} = \mathrm{Cov}\{ (\textbf{u}_0'\textbf{x})^2 \textbf{m}_3 + s_2 (\textbf{u}_0'\textbf{x})^3 \textbf{x} -  (\textbf{u}_0'\textbf{x})^4 \textbf{m}_1 - s_4 (\textbf{u}_0'\textbf{x}) \textbf{x} + (4 s_3 \textbf{m}_1 - 3 s_2 \textbf{m}_2) (\textbf{u}_0'\textbf{x}) - s_2 s_3 \textbf{x} \}.
    \end{align*}
    This matrix consists of 36 terms, which we next present and simplify using \eqref{eq:formula_m} and \eqref{eq:formula_G}. We use the notation $\psi = \| \boldsymbol{\theta} \| \tau^{-1}$. Note that $s_1 = 0$, $\textbf{m}_0 = \textbf{0}$ and $\textbf{f} := 4 s_3 \textbf{m}_1 - 3 s_2 \textbf{m}_2 = \psi s_2 s_3 \textbf{h} $.
    \begin{multicols}{2}
    \begin{itemize}
        \item[(1, 1):] $(s_4 - s_2^2) \textbf{m}_3 \textbf{m}_3' = \psi^2 s_4^2 (s_4 - s_2^2) \textbf{h} \textbf{h}'$.
        \item[(1, 2):] $s_2 (\textbf{m}_3 \textbf{m}_5' - s_2 \textbf{m}_3 \textbf{m}_3') = \psi^2 s_2 s_4 (s_6 - s_2 s_4) \textbf{h} \textbf{h}'$.
        \item[(1, 3):] $-(s_6 - s_2 s_4) \textbf{m}_3 \textbf{m}_1' = - \psi^2 s_2 s_4 (s_6 - s_2 s_4) \textbf{h} \textbf{h}'$.
        \item[(1, 4):] $-s_4(\textbf{m}_3 \textbf{m}_3' - s_2 \textbf{m}_3 \textbf{m}_1') = - \psi^2 s_4^2 (s_4 - s_2^2) \textbf{h} \textbf{h}'$.
        \item[(1, 5):] $s_3 \textbf{m}_3 \textbf{f}' = \psi^2 s_2 s_3^2 s_4 \textbf{h} \textbf{h}' $.
        \item[(1, 6):] $-s_2 s_3 \textbf{m}_3 \textbf{m}_2' = -\psi^2 s_2 s_3^2 s_4 \textbf{h} \textbf{h}'$.
        \item[(2, 1):] $\psi^2 s_2 s_4 (s_6 - s_2 s_4) \textbf{h} \textbf{h}'$.
        \item[(2, 2):] $ s_2^2 (\textbf{G}_6 - \textbf{m}_3 \textbf{m}_3') = s_2^2 s_6 \boldsymbol{\Sigma} + s_2^2 \{ \psi^2 (s_8 - s_4^2) - \tau^{-1} s_6 \} \textbf{h} \textbf{h}'$.
        \item[(2, 3):] $-s_2 (\textbf{m}_7 \textbf{m}_1' - s_4 \textbf{m}_3 \textbf{m}_1') = -\psi^2 s_2^2 (s_8 - s_4^2) \textbf{h} \textbf{h}'$.
        \item[(2, 4):] $-s_2 s_4(\textbf{G}_4 - \textbf{m}_3 \textbf{m}_1') =  -s_2 s_4^2 \boldsymbol{\Sigma} - s_2 s_4 \{ \psi^2 (s_6 - s_2 s_4) - \tau^{-1} s_4 \} \textbf{h} \textbf{h}'$.
        \item[(2, 5):] $ s_2 \textbf{m}_4 \textbf{f}' = \psi^2 s_2^2 s_3 s_5 \textbf{h} \textbf{h}' $.
        \item[(2, 6):] $- s_2^2 s_3 \textbf{G}_3 = - s_2^2 s_3^2 \boldsymbol{\Sigma} - s_2^2 s_3 \{ \psi^2 s_5 - \tau^{-1} s_3 \} \textbf{h} \textbf{h}'$.
        \item[(3, 1):] $- \psi^2 s_2 s_4 (s_6 - s_2 s_4) \textbf{h} \textbf{h}'$.
        \item[(3, 2):] $- \psi^2 s_2^2 (s_8 - s_4^2) \textbf{h} \textbf{h}'$.
        \item[(3, 3):] $ (s_8 - s_4^2) \textbf{m}_1 \textbf{m}_1' = \psi^2 s_2^2 (s_8 - s_4^2) \textbf{h}\textbf{h}' $.
        \item[(3, 4):] $s_4 (\textbf{m}_1 \textbf{m}_5' - s_4 \textbf{m}_1 \textbf{m}_1') = \psi^2 s_2 s_4 (s_6 - s_2 s_4) \textbf{h} \textbf{h}'$.
        \item[(3, 5):] $-s_5 \textbf{m}_1 \textbf{f}' = - \psi^2 s_2^2 s_3 s_5 \textbf{h} \textbf{h}'$.
        \item[(3, 6):] $s_2 s_3 \textbf{m}_1 \textbf{m}_4' = \psi^2 s_2^2 s_3 s_5 \textbf{h} \textbf{h}' $.
        \item[(4, 1):] $ - \psi^2 s_4^2 (s_4 - s_2^2) \textbf{h} \textbf{h}'$.
        \item[(4, 2):] $ - s_2 s_4^2 \boldsymbol{\Sigma} - s_2 s_4 \{ \psi^2 (s_6 - s_4 s_2) - \tau^{-1} s_4 \} \textbf{h} \textbf{h}'$.
        \item[(4, 3):] $\psi^2 s_2 s_4 (s_6 - s_2 s_4) \textbf{h} \textbf{h}'$.
        \item[(4, 4):] $s_4^2 (\textbf{G}_2 - \textbf{m}_1 \textbf{m}_1') = s_2 s_4^2 \boldsymbol{\Sigma} + s_4^2 \{ \psi^2 (s_4 - s_2^2) - \tau^{-1} s_2 \} \textbf{h} \textbf{h}'$.
        \item[(4, 5):] $-s_4 \textbf{m}_2 \textbf{f}' = - \psi^2 s_2 s_3^2 s_4 \textbf{h} \textbf{h}'$.
        \item[(4, 6):] $s_2 s_3 s_4 \textbf{G}_1 = \psi^2 s_2 s_3^2 s_4 \textbf{h} \textbf{h}'$.
        \item[(5, 1):] $\psi^2 s_2 s_3^2 s_4 \textbf{h} \textbf{h}' $.
        \item[(5, 2):] $\psi^2 s_2^2 s_3 s_5 \textbf{h} \textbf{h}' $.
        \item[(5, 3):] $- \psi^2 s_2^2 s_3 s_5 \textbf{h} \textbf{h}'$.
        \item[(5, 4):] $- \psi^2 s_2 s_3^2 s_4 \textbf{h} \textbf{h}'$.
        \item[(5, 5):] $s_2 \textbf{f} \textbf{f}' = \psi^2 s_2^3 s_3^2 \textbf{h} \textbf{h}' $.
        \item[(5, 6):] $ -s_2 s_3 \textbf{f} \textbf{m}_1' = -\psi^2 s_2^3 s_3^2 \textbf{h} \textbf{h}' $.
        \item[(6, 1):] $-\psi^2 s_2 s_3^2 s_4 \textbf{h} \textbf{h}'$.
        \item[(6, 2):] $- s_2^2 s_3^2 \boldsymbol{\Sigma} - s_2^2 s_3 \{ \psi^2 s_5 - \tau^{-1} s_3 \} \textbf{h} \textbf{h}'$.
        \item[(6, 3):] $\psi^2 s_2^2 s_3 s_5 \textbf{h} \textbf{h}' $.
        \item[(6, 4):] $\psi^2 s_2 s_3^2 s_4 \textbf{h} \textbf{h}'$.
        \item[(6, 5):] $-\psi^2 s_2^3 s_3^2 \textbf{h} \textbf{h}' $.
        \item[(6, 6):] $s_2^2 s_3^2 \textbf{G}_0 = s_2^2 s_3^2 \boldsymbol{\Sigma} + s_2^2 s_3^2 (\psi^2 s_2 - \tau^{-1}) \textbf{h} \textbf{h}'$.
    \end{itemize}
    \end{multicols}
    Summation of the previous 36 terms results in $\boldsymbol{\Pi} = s_2 (s_2 s_6 - s_2 s_3^2 - s_4^2) (\boldsymbol{\Sigma} - \tau^{-1} \textbf{h} \textbf{h}')$. Thus, from the reasoning preceding \eqref{eq:limiting_dist_kurtosis}, we have that $\sqrt{n}(s_n \textbf{u}_n - \textbf{u}_0) $ has a limiting normal distribution and with the covariance matrix $\boldsymbol{\Psi}_\kappa = \textbf{G}^{-1} \boldsymbol{\Pi} (\textbf{G}^{-1})'$. Plugging now in the values of $\textbf{G}$ and $\boldsymbol{\Pi}$ and simplifying, we obtain,
    \begin{align}\label{eq:final_covariance_kurtosis}
        \boldsymbol{\Psi}_\kappa = \frac{s_2 (s_2 s_6 - s_2 s_3^2 - s_4^2)}{(3 {s}_2^2 - {s}_4)^2} \left( \textbf{I}_p - \frac{\boldsymbol{\theta} \boldsymbol{\theta}'}{\| \boldsymbol{\theta} \|^2} \right) \boldsymbol{\Sigma}^{-1} \left( \textbf{I}_p - \frac{\boldsymbol{\theta} \boldsymbol{\theta}'}{\| \boldsymbol{\theta} \|^2} \right).
    \end{align}
    Now, recall that $s_k \equiv s_k(\textbf{u}_0) = \mathrm{E}\{ (\textbf{u}_0' \textbf{x})^k \} = \| \boldsymbol{\theta} \|^{-k} \mathrm{E}\{ (\boldsymbol{\theta}' \textbf{x})^k \}$ where $\boldsymbol{\theta}' \textbf{x} \sim \alpha_1 \mathcal{N}_1(-\alpha_2 \tau, \tau) + \alpha_2 \mathcal{N}_1(\alpha_1 \tau, \tau)$ and $\tau = \boldsymbol{\theta}' \textbf{h} = \boldsymbol{\theta}' \boldsymbol{\Sigma} \boldsymbol{\theta}$. Using the moment formulas for univariate normal distribution we now obtain that
    \begin{align*}
        s_2 = \| \boldsymbol{\theta} \|^{-2} \tau (1 + \beta \tau), \,\, s_3 = \| \boldsymbol{\theta} \|^{-3} (\alpha_1 - \alpha_2) \beta \tau^3, \,\, s_4 = \| \boldsymbol{\theta} \|^{-4} \tau^2 \{\beta \tau^2(1 - 6 \beta) + 3(1 + \beta \tau)^2\},
    \end{align*}
    and
    \begin{align*}
        s_6 = \| \boldsymbol{\theta} \|^{-6} \tau^3 \{ \beta (1 - 5 \beta + 5 \beta^2)  \tau^3 + 15 \beta (1 - 3 \beta) \tau^2 + 45 \beta \tau + 15\}
    \end{align*}
    where $\beta := \alpha_1 \alpha_2$ and we have used the identities $ \alpha_1^3 + \alpha_2^3 = 1 - 3 \beta$ and $ \alpha_1^5 + \alpha_2^5 = 1 - 5\beta + 5\beta^2$. Plugging these in to \eqref{eq:final_covariance_kurtosis} and simplifying (using $(\alpha_1 - \alpha_2)^2 = 1 - 4 \beta$), shows that the constant in front is
    \begin{align*}
        \frac{(1 + \beta \tau)(6 + 24 \beta \tau + 9 \beta \tau^2 + \beta \tau^3 - 18 \beta^2 \tau^2 - 3 \beta^2 \tau^3)}{\tau^3 \beta^2 (6 \beta - 1)^2 \| \boldsymbol{\theta} \|^2}
    \end{align*}
\end{proof}

\begin{proof}[Proof of Lemma \ref{lem:skewness_fisher}]
    The distribution of the projection $ \textbf{u}' \tilde{\textbf{x}} $ is
    \begin{align*}
        \textbf{u}' \tilde{\textbf{x}} \sim \alpha_1 \mathcal{N}_p(-\alpha_2 t , g) + \alpha_2 \mathcal{N}_p(\alpha_1 t, g),
    \end{align*}
    where $t := \textbf{u}'\textbf{h}$, $g := \textbf{u}'\boldsymbol{\Sigma}\textbf{u}$ and $\textbf{h} := \boldsymbol{\mu}_2 - \boldsymbol{\mu}_1$. By the moment formulas of univariate normal distribution, $\mathrm{E} \{ ( \textbf{u}' \tilde{\textbf{x}} )^2 \} = g + \beta t^2$, where $\beta := \alpha_1 \alpha_2$. Similarly, $ \mathrm{E} \{ ( \textbf{u}' \tilde{\textbf{x}} )^3 \} = (\alpha_1 - \alpha_2) \beta t^3 $. Hence,
    \begin{align*}
        \gamma(\textbf{u}) = \beta^2 (1 - 4 \beta) \frac{f^3}{(1 + \beta f)^{3}},
    \end{align*}
    where $f := t^2/g \geq 0$. Now, if $\alpha_1 = \alpha_2 = 1/2$, then clearly $\gamma(\textbf{u})^2 = 0$. If $\alpha_1 \neq 1/2$, the derivative of the map $x \mapsto x^3/( 1 + \beta x)^{3}$ is $3x^2/( 1 + \beta x)^{4}$, showing that the map is strictly increasing outside of the origin. The conclusion now follows as in the proof of Lemma \ref{lem:kurtosis_fisher}.

\end{proof}

\begin{proof}[Proof of Theorem \ref{theo:asnorm_blind_2}]
    The strong consistency follows as soon as we show the strong uniform consistency,
    \begin{align}\label{eq:uniform_consistency_2}
        \sup_{\textbf{u} \in \mathbb{S}^{p - 1}} | \gamma_n(\textbf{u})^2 -  \gamma(\textbf{u})^2 | \rightarrow 0, \quad \mathrm{a.s.}.
    \end{align}
    By Theorem 2 and Lemma 1 in \cite{andrews1992generic}, \eqref{eq:uniform_consistency_2} holds if, 1) the parameter space is compact, 2) we have $\gamma_n(\textbf{u})^2 \rightarrow \gamma(\textbf{u})^2$, a.s., for all $\textbf{u} \in \mathbb{S}^{p - 1}$ (this holds by LLN and the continuous mapping theorem), 3) $\gamma^2$ is uniformly continuous in $\textbf{u}$ and, 4) $\gamma_n^2$ is Lipschitz continuous in the sense that $| \gamma_n(\textbf{u}_1)^2 -  \gamma_n(\textbf{u}_2)^2 | \leq K_n \| \textbf{u}_1 - \textbf{u}_2 \|$ for all $\textbf{u}_1, \textbf{u}_2 \in \mathbb{S}^{p - 1}$ and some random variable $K_n$ converging almost surely to a constant.

    We now verify condition 4) above. Using the notation of the proof of Theorem \ref{theo:asnorm_blind}, we have
    \begin{align*}
        | \gamma_n(\textbf{u}_1)^2 -  \gamma_n(\textbf{u}_2)^2 | =& \left| \frac{\tilde{s}_{n3}^2(\textbf{u}_1)}{\tilde{s}_{n2}^3(\textbf{u}_1)} - \frac{\tilde{s}_{n3}^2(\textbf{u}_2)}{\tilde{s}_{n2}^3(\textbf{u}_2)} \right|\\
        &\leq \frac{| \tilde{s}_{n3}^2(\textbf{u}_1) - \tilde{s}_{n3}^2(\textbf{u}_2) | \tilde{s}_{n2}^3(\textbf{u}_2) - \tilde{s}_{n3}^2(\textbf{u}_2) | \tilde{s}_{n2}^3(\textbf{u}_1) - \tilde{s}_{n2}^3(\textbf{u}_2) |}{\tilde{s}_{n2}^3(\textbf{u}_1) \tilde{s}_{n2}^3(\textbf{u}_2)}.
    \end{align*}
    Now, $\tilde{s}_{n2}(\textbf{u})$ is, for all $\textbf{u} \in \mathbb{S}^{p - 1}$, lower bounded by the smallest eigenvalue of the sample covariance matrix, which by the continuity of the eigenvalues and the positive-definiteness of the covariance matrix converges almost surely to a positive constant. Moreover, we have
    \begin{align*}
        |\tilde{s}_{n3}(\textbf{u})| \leq \frac{1}{n} \sum_{i=1}^n | \textbf{u}' \tilde{\textbf{x}}_i |^3 \leq \frac{1}{n} \sum_{i=1}^n \| \textbf{x}_i - \bar{\textbf{x}} \|^3 \leq \frac{1}{n} \sum_{i=1}^n (\| \textbf{x}_i \| + \| \bar{\textbf{x}} \|)^3,
    \end{align*}
    which converges, by LLN, almost surely to a constant, and similar result can be shown for $|\tilde{s}_{n2}(\textbf{u})|$. Finally,
    \begin{align*}
        | \tilde{s}_{n3}^2(\textbf{u}_1) - \tilde{s}_{n3}^2(\textbf{u}_2) | \leq | \tilde{s}_{n3}(\textbf{u}_1) - \tilde{s}_{n3}(\textbf{u}_2) | \frac{2}{n} \sum_{i=1}^n \| \textbf{x}_i - \bar{\textbf{x}} \|^3,
    \end{align*}
    and
    \begin{align*}
        | \tilde{s}_{n3}(\textbf{u}_1) - \tilde{s}_{n3}(\textbf{u}_2) | &\leq \frac{1}{n} \sum_{i=1}^n | (\textbf{u}_1 - \textbf{u}_2)' \tilde{\textbf{x}}_i | | (\textbf{u}_1' \tilde{\textbf{x}}_i)^2 + \textbf{u}_1' \tilde{\textbf{x}}_i \textbf{u}_2' \tilde{\textbf{x}}_i + (\textbf{u}_2' \tilde{\textbf{x}}_i)^2 |\\
        &\leq \| \textbf{u}_1 - \textbf{u}_2 \| \frac{3}{n} \sum_{i=1}^n \| {\textbf{x}}_i - \bar{\textbf{x}} \|^3,
    \end{align*}
    and putting everything above together, we conclude that the Lipschitz continuity 4) holds. What remains to be verified is then condition 3), which can be shown similarly to 4) after recalling that Lipschitz continuity implies uniform continuity. Hence, the strong consistency of the estimator follows.

    Also the proof of the limiting normality has exactly the same steps as in the proof of Theorem \ref{theo:asnorm_blind} and we only provide the key steps and expressions, using the same notation as in the proof of Theorem \ref{theo:asnorm_blind}. The gradient of $\gamma_n$ is
    \begin{align*}
        \nabla \gamma_n (\textbf{u}) = \frac{6}{\tilde{s}_{n2}(\textbf{u})^{5/2}} \gamma_n(\textbf{u}) \{ \tilde{s}_{n2}(\textbf{u}) \tilde{\textbf{m}}_{n2}(\textbf{u}) - \tilde{s}_{n3}(\textbf{u}) \tilde{\textbf{m}}_{n1}(\textbf{u}) \},
    \end{align*}
    leading to the estimating equation $g_{n\gamma}(\textbf{u}_n) = \textbf{0}$, for $ g_{n\gamma}(\textbf{u}) := \tilde{s}_{n2}(\textbf{u}) \tilde{\textbf{m}}_{n2}(\textbf{u}) - \tilde{s}_{n3}(\textbf{u}) \tilde{\textbf{m}}_{n1}(\textbf{u}) $. The Jacobian of $g_{n\gamma}$ at $\textbf{u}_0$ satisfies (after simplification via the estimating equation)
     \begin{align*}
        \nabla g_{n\gamma}(\textbf{u}_0) \rightarrow_p - \frac{{s}_{3}(\textbf{u}_0)}{{s}_{2}(\textbf{u}_0)} {\textbf{m}}_{1}(\textbf{u}_0) {\textbf{m}}_{1}(\textbf{u}_0)' + 2 {s}_{2}(\textbf{u}_0) {\textbf{G}}_{1}(\textbf{u}_0) - {s}_{3}(\textbf{u}_0) {\textbf{G}}_{0}(\textbf{u}_0).
    \end{align*}
    By the formulas for ${\textbf{m}}_{k}(\textbf{u}_0)$ and ${\textbf{G}}_{k}(\textbf{u}_0)$, this limit equals $-s_3 \boldsymbol{\Sigma}^{1/2} (\textbf{I}_p - \textbf{w} \textbf{w}') \boldsymbol{\Sigma}^{1/2} $. Using the same trick as in the proof of Theorem \ref{theo:asnorm_blind} to make the Jacobian full rank (addition of $c_0 \textbf{h} (s_n \textbf{u}_n + \textbf{u}_0)' \sqrt{n}(s_n \textbf{u}_n - \textbf{u}_0) = 0 $ for suitably chosen $c_0$ to the Taylor expansion), we obtain the corresponding matrix $\textbf{G}$ to be
    \begin{align*}
        \textbf{G} = -s_3 \boldsymbol{\Sigma}^{1/2} (\textbf{I}_p - \textbf{w} \textbf{w}' + \textbf{w} \textbf{w}' \boldsymbol{\Sigma}^{-1}) \boldsymbol{\Sigma}^{1/2},
    \end{align*}
    with the inverse,
\begin{align*}
        \textbf{G}^{-1} = -
        \frac{1}{s_3} \left\{ \boldsymbol{\Sigma}^{-1} + \frac{1}{\| \boldsymbol{\theta} \|^2} \boldsymbol{\theta} \boldsymbol{\theta}' (\textbf{I}_p - \boldsymbol{\Sigma}^{-1})  \right\}.
    \end{align*}

    Moving to study the limiting distribution of $\sqrt{n} g_{n\gamma}$, we note that
    \begin{align*}
        \tilde{\textbf{m}}_{n2}(\textbf{u}) = {\textbf{m}}_{n2}(\textbf{u}) - 2 {s}_{n1}(\textbf{u}) \textbf{m}_{1}(\textbf{u}) - {s}_{2}(\textbf{u}) {\textbf{m}}_{n0}(\textbf{u}) + o_p(1/\sqrt{n}),
    \end{align*}
    and
    \begin{align*}
        \tilde{s}_{n3}(\textbf{u}) = {s}_{n3}(\textbf{u}) - 3 {s}_{2}(\textbf{u}) {s}_{n1}(\textbf{u}) + o_p(1/\sqrt{n}).
    \end{align*}
    With these, we expand $\sqrt{n}  g_{n\gamma}(\textbf{u}_0)$ to be,
    \begin{align}\label{eq:g_gamma_expansion}
        \sqrt{n} g_{n\gamma} =& \sqrt{n} ( {s}_{n2} - s_2 ) \textbf{m}_2 + s_2 \sqrt{n} (\textbf{m}_{n2} - \textbf{m}_2) - \sqrt{n} ( {s}_{n3} - s_3 ) \textbf{m}_1 - s_3 \sqrt{n} (\textbf{m}_{n1} - \textbf{m}_1)\nonumber\\
        &+ s_2 \textbf{m}_1 \sqrt{n} s_{n1} - s_2^2 \sqrt{n} \textbf{m}_{n0}.
    \end{align}
    Hence, by CLT, $\sqrt{n} g_{n\gamma}$ has a limiting normal distribution with the covariance matrix,
    \begin{align*}
        \boldsymbol{\Pi} = \mathrm{Cov}\{ (\textbf{u}_0'\textbf{x})^2 \textbf{m}_2 + s_2 (\textbf{u}_0'\textbf{x})^2 \textbf{x} -  (\textbf{u}_0'\textbf{x})^3 \textbf{m}_1 - s_3 (\textbf{u}_0'\textbf{x}) \textbf{x} + s_2 \textbf{m}_1 (\textbf{u}_0'\textbf{x}) - s_2^2 \textbf{x} \}.
    \end{align*}
    The covariance matrix has the following 36 terms.
    \begin{multicols}{2}
    \begin{itemize}
        \item[(1, 1):] $(s_4 - s_2^2) \textbf{m}_2 \textbf{m}_2' = \psi^2 s_3^2 (s_4 - s_2^2) \textbf{h} \textbf{h}'$.
        \item[(1, 2):] $s_2 (\textbf{m}_2 \textbf{m}_4' - s_2 \textbf{m}_2 \textbf{m}_2') = \psi^2 s_2 s_3 (s_5 - s_2 s_3) \textbf{h} \textbf{h}'$.
        \item[(1, 3):] $-(s_5 - s_2 s_3) \textbf{m}_2 \textbf{m}_1' = - \psi^2 s_2 s_3 (s_5 - s_2 s_3) \textbf{h} \textbf{h}'$.
        \item[(1, 4):] $-s_3(\textbf{m}_2 \textbf{m}_3' - s_2 \textbf{m}_2 \textbf{m}_1') = - \psi^2 s_3^2 (s_4 - s_2^2) \textbf{h} \textbf{h}'$.
        \item[(1, 5):] $s_2 s_3 \textbf{m}_2 \textbf{m}_1' = \psi^2 s_2^2 s_3^2 \textbf{h} \textbf{h}' $.
        \item[(1, 6):] $-s_2^2 \textbf{m}_2 \textbf{m}_2' = -\psi^2 s_2^2 s_3^2 \textbf{h} \textbf{h}'$.
        \item[(2, 1):] $\psi^2 s_2 s_3 (s_5 - s_2 s_3) \textbf{h} \textbf{h}'$.
        \item[(2, 2):] $ s_2^2 (\textbf{G}_4 - \textbf{m}_2 \textbf{m}_2') = s_2^2 s_4 (\boldsymbol{\Sigma} - \tau^{-1} \textbf{h} \textbf{h}') + \psi^2 s_2^2 (s_6 - s_3^2) \textbf{h} \textbf{h}'$.
        \item[(2, 3):] $-s_2 (\textbf{m}_5 \textbf{m}_1' - s_3 \textbf{m}_2 \textbf{m}_1') = -\psi^2 s_2^2 (s_6 - s_3^2) \textbf{h} \textbf{h}'$.
        \item[(2, 4):] $-s_2 s_3(\textbf{G}_3 - \textbf{m}_2 \textbf{m}_1') = -s_2 s_3^2 (\boldsymbol{\Sigma} - \tau^{-1} \textbf{h} \textbf{h}') - \psi^2 s_2 s_3 (s_5 - s_2 s_3) \textbf{h} \textbf{h}'$.
        \item[(2, 5):] $ s_2^2 \textbf{m}_3 \textbf{m}_1' = \psi^2 s_2^3 s_4 \textbf{h} \textbf{h}' $.
        \item[(2, 6):] $- s_2^3 \textbf{G}_2 = - s_2^4 (\boldsymbol{\Sigma} - \tau^{-1} \textbf{h} \textbf{h}') - \psi^2 s_2^3 s_4 \textbf{h} \textbf{h}'$.
        \item[(3, 1):] $- \psi^2 s_2 s_3 (s_5 - s_2 s_3) \textbf{h} \textbf{h}'$.
        \item[(3, 2):] $-\psi^2 s_2^2 (s_6 - s_3^2) \textbf{h} \textbf{h}'$.
        \item[(3, 3):] $ (s_6 - s_3^2) \textbf{m}_1 \textbf{m}_1' = \psi^2 s_2^2 (s_6 - s_3^2) \textbf{h}\textbf{h}' $.
        \item[(3, 4):] $s_3 (\textbf{m}_1 \textbf{m}_4' - s_3 \textbf{m}_1 \textbf{m}_1') = \psi^2 s_2 s_3 (s_5 - s_2 s_3) \textbf{h} \textbf{h}'$.
        \item[(3, 5):] $-s_2 s_4 \textbf{m}_1 \textbf{m}_1' = - \psi^2 s_2^3 s_4 \textbf{h} \textbf{h}'$.
        \item[(3, 6):] $s_2^2 \textbf{m}_1 \textbf{m}_3' = \psi^2 s_2^3 s_4 \textbf{h} \textbf{h}' $.
        \item[(4, 1):] $ - \psi^2 s_3^2 (s_4 - s_2^2) \textbf{h} \textbf{h}'$.
        \item[(4, 2):] $ -s_2 s_3^2 (\boldsymbol{\Sigma} - \tau^{-1} \textbf{h} \textbf{h}') - \psi^2 s_2 s_3 (s_5 - s_2 s_3) \textbf{h} \textbf{h}'$.
        \item[(4, 3):] $\psi^2 s_2 s_3 (s_5 - s_2 s_3) \textbf{h} \textbf{h}'$.
        \item[(4, 4):] $s_3^2 (\textbf{G}_2 - \textbf{m}_1 \textbf{m}_1') =  s_2 s_3^2 (\boldsymbol{\Sigma} - \tau^{-1} \textbf{h} \textbf{h}') + \psi^2 s_3^2 (s_4 - s_2^2) \textbf{h} \textbf{h}'$.
        \item[(4, 5):] $-s_2 s_3 \textbf{m}_2 \textbf{m}_1' = - \psi^2 s_2^2 s_3^2 \textbf{h} \textbf{h}'$.
        \item[(4, 6):] $s_2^2 s_3 \textbf{G}_1 = \psi^2 s_2^2 s_3^2 \textbf{h} \textbf{h}'$.
        \item[(5, 1):] $\psi^2 s_2^2 s_3^2 \textbf{h} \textbf{h}'$.
        \item[(5, 2):] $\psi^2 s_2^3 s_4 \textbf{h} \textbf{h}'$.
        \item[(5, 3):] $- \psi^2 s_2^3 s_4 \textbf{h} \textbf{h}'$.
        \item[(5, 4):] $- \psi^2 s_2^2 s_3^2 \textbf{h} \textbf{h}'$.
        \item[(5, 5):] $s_2^3 \textbf{m}_1 \textbf{m}_1' = \psi^2 s_2^5 \textbf{h} \textbf{h}' $.
        \item[(5, 6):] $ -s_2^3 \textbf{m}_1 \textbf{m}_1' = -\psi^2 s_2^5 \textbf{h} \textbf{h}' $.
        \item[(6, 1):] $-\psi^2 s_2^2 s_3^2 \textbf{h} \textbf{h}'$.
        \item[(6, 2):] $- s_2^4 (\boldsymbol{\Sigma} - \tau^{-1} \textbf{h} \textbf{h}') - \psi^2 s_2^3 s_4 \textbf{h} \textbf{h}'$.
        \item[(6, 3):] $\psi^2 s_2^3 s_4 \textbf{h} \textbf{h}'$.
        \item[(6, 4):] $\psi^2 s_2^2 s_3^2 \textbf{h} \textbf{h}'$.
        \item[(6, 5):] $ -\psi^2 s_2^5 \textbf{h} \textbf{h}' $.
        \item[(6, 6):] $s_2^4 \textbf{G}_0 = s_2^4 (\boldsymbol{\Sigma} - \tau^{-1} \textbf{h} \textbf{h}') + \psi^2 s_2^5 \textbf{h} \textbf{h}'$.
    \end{itemize}
    \end{multicols}
    Summing the terms gives $\boldsymbol{\Pi} = s_2 (s_2 s_4 - s_2^3 - s_3^2) (\boldsymbol{\Sigma} - \tau^{-1} \textbf{h} \textbf{h}')$. This yields the limiting covariance,
    \begin{align}
        \boldsymbol{\Psi}_\kappa =  \textbf{G}^{-1} \boldsymbol{\Pi} (\textbf{G}^{-1})' = \frac{s_2 (s_2 s_4 - s_2^3 - s_3^2)}{s_3^2} \left( \textbf{I}_p - \frac{\boldsymbol{\theta} \boldsymbol{\theta}'}{\| \boldsymbol{\theta} \|^2} \right) \boldsymbol{\Sigma}^{-1} \left( \textbf{I}_p - \frac{\boldsymbol{\theta} \boldsymbol{\theta}'}{\| \boldsymbol{\theta} \|^2} \right).
    \end{align}
    Finally, simplifying the constant in front shows that it equals
    \begin{align*}
        \frac{(1 + \beta \tau) ( 2 + 6 \beta \tau + \beta \tau^2)}{\tau^2 \beta^2 (1 - 4 \beta) \| \boldsymbol{\theta} \|^2}.
    \end{align*}
\end{proof}


\begin{proof}[Proof of Theorem \ref{theo:asnorm_blind_3}]
    Again, the strong consistency follows as in Theorem \ref{theo:asnorm_blind} and we omit its proof. For the limiting distribution, we give in the following the key steps of the proof (and use the same notation as in the proofs of Theorems \ref{theo:asnorm_blind} and \ref{theo:asnorm_blind_2}).

     The gradient of $\eta_n$ is
    \begin{align*}
        \nabla \eta_n (\textbf{u}) = \frac{1}{\tilde{s}_{n2}^3} [ 6 w_1 \gamma_n(\textbf{u}) \tilde{s}_{n2}^{1/2}(\textbf{u}) g_{n\gamma}(\textbf{u}) + 8 w_2 \{ \kappa_n(\textbf{u}) - 3 \} g_{n\kappa}(\textbf{u}) ],
    \end{align*}
    where $ g_{n\gamma}(\textbf{u}) = \tilde{s}_{n2}(\textbf{u}) \tilde{\textbf{m}}_{n2}(\textbf{u}) - \tilde{s}_{n3}(\textbf{u}) \tilde{\textbf{m}}_{n1}(\textbf{u}) $ and $g_{n\kappa}(\textbf{u}) = \tilde{s}_{n2}(\textbf{u}) \tilde{\textbf{m}}_{n3}(\textbf{u}) - \tilde{s}_{n4}(\textbf{u}) \tilde{\textbf{m}}_{n1}(\textbf{u}) $ were used in the proofs of Theorems \ref{theo:asnorm_blind_2} and \ref{theo:asnorm_blind}, respectively. Thus, $\textbf{u}_n$ solves the estimating equation $g_{n\eta}(\textbf{u}_n) = \textbf{0}$, where $ g_{n\eta}(\textbf{u}) := 3 w_1 \gamma_n(\textbf{u}) \tilde{s}_{n2}^{1/2}(\textbf{u}) g_{n\gamma}(\textbf{u}) + 4 w_2 \{ \kappa_n(\textbf{u}) - 3 \} g_{n\kappa}(\textbf{u})$. The Jacobian of $g_{n\eta}$ at $\textbf{u}_0$ satisfies
    \begin{align*}
        \nabla g_{n \eta}(\textbf{u}_0) =& 3 w_1 [ \nabla \{ \gamma_n(\textbf{u}_0) \tilde{s}_{n2}^{1/2}(\textbf{u}_0) \} g_{n\gamma}(\textbf{u}_0)' + \gamma_n(\textbf{u}_0) \tilde{s}_{n2}^{1/2}(\textbf{u}_0) \nabla g_{n\gamma}(\textbf{u}_0) ]\\
        &+ 4 w_2 [ \nabla \{ \kappa_n(\textbf{u}_0) - 3 \} g_{n\kappa}(\textbf{u}_0)' + \{ \kappa_n(\textbf{u}_0) - 3 \}\nabla g_{n\kappa}(\textbf{u}_0)].
    \end{align*}
    Recalling now that $ \textbf{m}_{k}(\textbf{u}_0) = \psi s_{k + 1} \textbf{h} $ and $\textbf{G}_{k}(\textbf{u}_0) = s_k (\boldsymbol{\Sigma} - \tau^{-1} \textbf{h} \textbf{h}') + \psi^2 s_{k + 2} \textbf{h} \textbf{h}' $, where $\psi = \| \boldsymbol{\theta} \| \tau^{-1}$, LLN now gives that $ g_{n\gamma}(\textbf{u}_0) \rightarrow_p \textbf{0} $ and $ g_{n\kappa}(\textbf{u}_0) \rightarrow_p \textbf{0} $, implying that
    \begin{align*}
        \nabla g_{n \eta}(\textbf{u}_0) \rightarrow_p -s_2^{-2}\{ 3 w_1 s_2 s_3^2 + 4 w_2 (s_4 - 3 {s}_2^2)^2 \} \boldsymbol{\Sigma}^{1/2} (\textbf{I}_p - \textbf{w} \textbf{w}') \boldsymbol{\Sigma}^{1/2}.
    \end{align*}
    Completing now this matrix to full rank through the unit length constraint on $\textbf{u}_n$ (as in the proofs of Theorems \ref{theo:asnorm_blind_2} and \ref{theo:asnorm_blind}), we now obtain that,
    \begin{align*}
        \textbf{G}^{-1} = \frac{-s_2^2}{3 w_1 s_2 s_3^2 + 4 w_2 (s_4 - 3 {s}_2^2)^2} \left\{ \boldsymbol{\Sigma}^{-1} + \frac{1}{\| \boldsymbol{\theta} \|^2} \boldsymbol{\theta} \boldsymbol{\theta}' (\textbf{I}_p - \boldsymbol{\Sigma}^{-1})  \right\}.
    \end{align*}
    We then derive the limiting distribution of $\sqrt{n} g_{n\eta}(\textbf{u}_0) = 3 w_1 \sqrt{n} \tilde{s}_{n2}^{1/2}(\textbf{u}_0) g_{n\gamma}(\textbf{u}_0) + 4 w_2 \sqrt{n} g_{n\kappa}(\textbf{u}_0)$. Recalling that the population version satisfies $ 3 w_1 \gamma(\textbf{u}_0) {s}_{2}^{1/2}(\textbf{u}_0) g_{\gamma}(\textbf{u}_0) + 4 w_2 \{ \kappa(\textbf{u}_0) - 3 \} g_{\kappa}(\textbf{u}_0) = 0$, we get the expansion,
    \begin{align*}
        \sqrt{n} g_{n\eta} =& 3 w_1 \{ \sqrt{n} ( \gamma_n \tilde{s}_{n2}^{1/2} - \gamma s_2^{1/2} ) (s_2 \textbf{m}_2 - s_3 \textbf{m}_1) + s_2^{-1} s_3 \sqrt{n} g_{n\gamma} \}\\
        &+ 4 w_2 \{ \sqrt{n} ( \kappa_n - \kappa ) (s_2 \textbf{m}_3 - s_4 \textbf{m}_1) + s_2^{-2} (s_4 - 3 s_2^2) \sqrt{n} g_{n\kappa} \} + o_p(1),
    \end{align*}
    where $\sqrt{n} g_{n\kappa}$ has the expansion given in \eqref{eq:g_kappa_expansion}. Now, $s_2 \textbf{m}_2 - s_3 \textbf{m}_1 = \textbf{0}$ and $s_2 \textbf{m}_3 - s_4 \textbf{m}_1 = \textbf{0}$, implying that
    \begin{align*}
        \sqrt{n} g_{n\eta} = 3 w_1 s_2^{-1} s_3 \sqrt{n} g_{n\gamma} + 4 w_2 s_2^{-2} (s_4 - 3 s_2^2) \sqrt{n} g_{n\kappa} + o_p(1),
    \end{align*}
    where $ \sqrt{n} g_{n\gamma} $ has the expansion given in \eqref{eq:g_gamma_expansion}. Consequently, by CLT, $\sqrt{n} g_{n\eta}$ has a limiting normal distribution. By the proof of Theorem \ref{theo:asnorm_blind}, the limiting covariance matrix of $4 w_2 s_2^{-2} (s_4 - 3 s_2^2) \sqrt{n} g_{n\kappa}$ is $16 w_2^2 s_2^{-3} (s_4 - 3 s_2^2)^2 (s_2 s_6 - s_2 s_3^2 - s_4^2) (\boldsymbol{\Sigma} - \tau^{-1} \textbf{h} \textbf{h}')$ and, by the proof of Theorem \ref{theo:asnorm_blind_2}, the limiting covariance matrix of $ 3 w_1 s_2^{-1} s_3 \sqrt{n} g_{n\gamma} $ is $ 9 w_1^2 s_2^{-1} s_3^2 (s_2 s_4 - s_2^3 - s_3^2) (\boldsymbol{\Sigma} - \tau^{-1} \textbf{h} \textbf{h}') $. Thus, the limiting covariance matrix of $\sqrt{n} g_{n\eta}$ is $ \{ 9 w_1^2 s_2^{-1} s_3^2 (s_2 s_4 - s_2^3 - s_3^2) + 16 w_2^2 s_2^{-3} (s_4 - 3 s_2^2)^2 (s_2 s_6 - s_2 s_3^2 - s_4^2) \} (\boldsymbol{\Sigma} - \tau^{-1} \textbf{h} \textbf{h}') + 24 w_1 w_2 s_2^{-3} s_3 (s_4 - 3 s_2^2) \mathrm{Cov}(\textbf{y}_1, \textbf{y}_2) $, where
    \begin{align*}
        \textbf{y}_1 &:= (\textbf{u}_0'\textbf{x})^2 \textbf{m}_2 + s_2 (\textbf{u}_0'\textbf{x})^2 \textbf{x} -  (\textbf{u}_0'\textbf{x})^3 \textbf{m}_1 - s_3 (\textbf{u}_0'\textbf{x}) \textbf{x} + s_2 (\textbf{u}_0'\textbf{x}) \textbf{m}_1 - s_2^2 \textbf{x},\\
        \textbf{y}_2 &:= (\textbf{u}_0'\textbf{x})^2 \textbf{m}_3 + s_2 (\textbf{u}_0'\textbf{x})^3 \textbf{x} -  (\textbf{u}_0'\textbf{x})^4 \textbf{m}_1 - s_4 (\textbf{u}_0'\textbf{x}) \textbf{x} + s_3 (\textbf{u}_0'\textbf{x}) \textbf{m}_1 - s_2 s_3 \textbf{x}.
    \end{align*}
    The matrix $\mathrm{Cov}(\textbf{y}_1, \textbf{y}_2)$ consists of the following 36 terms:
    \begin{multicols}{2}
    \begin{itemize}
        \item[(1, 1):] $\psi^2 s_3 s_4 (s_4 - s_2^2) \textbf{h} \textbf{h}'$.
        \item[(1, 2):] $\psi^2 s_2 s_3 (s_6 - s_2 s_4) \textbf{h} \textbf{h}'$.
        \item[(1, 3):] $-\psi^2 s_2 s_3 (s_6 - s_2 s_4) \textbf{h} \textbf{h}'$.
        \item[(1, 4):] $-\psi^2 s_3 s_4 (s_4 - s_2^2) \textbf{h} \textbf{h}'$.
        \item[(1, 5):] $\psi^2 s_2 s_3^3 \textbf{h} \textbf{h}'$.
        \item[(1, 6):] $-\psi^2 s_2 s_3^3 \textbf{h} \textbf{h}'$.
        \item[(2, 1):] $\psi^2 s_2 s_4 (s_5 - s_2 s_3) \textbf{h} \textbf{h}'$.
        \item[(2, 2):] $s_2^2 s_5 (\boldsymbol{\Sigma} - \tau^{-1} \textbf{h} \textbf{h}') + \psi^2 s_2^2 (s_7 - s_3 s_4) \textbf{h} \textbf{h}'$.
        \item[(2, 3):] $-\psi^2 s_2^2 (s_7 - s_3 s_4) \textbf{h} \textbf{h}'$.
        \item[(2, 4):] $-s_2 s_3 s_4 (\boldsymbol{\Sigma} - \tau^{-1} \textbf{h} \textbf{h}') - \psi^2 s_2 s_4 (s_5 - s_2 s_3) \textbf{h} \textbf{h}'$.
        \item[(2, 5):] $\psi^2 s_2^2 s_3 s_4 \textbf{h} \textbf{h}'$.
        \item[(2, 6):] $-s_2^3 s_3 (\boldsymbol{\Sigma} - \tau^{-1} \textbf{h} \textbf{h}') - \psi^2 s_2^2 s_3 s_4 \textbf{h} \textbf{h}'$.
        \item[(3, 1):] $-\psi^2 s_2 s_4 (s_5 - s_2 s_3) \textbf{h} \textbf{h}'$.
        \item[(3, 2):] $-\psi^2 s_2^2 (s_7 - s_3 s_4) \textbf{h} \textbf{h}'$.
        \item[(3, 3):] $\psi^2 s_2^2 (s_7 - s_3 s_4) \textbf{h} \textbf{h}'$.
        \item[(3, 4):] $\psi^2 s_2 s_4 (s_5 - s_2 s_3) \textbf{h} \textbf{h}'$.
        \item[(3, 5):] $-\psi^2 s_2^2 s_3 s_4 \textbf{h} \textbf{h}'$.
        \item[(3, 6):] $\psi^2 s_2^2 s_3 s_4 \textbf{h} \textbf{h}'$.
        \item[(4, 1):] $-\psi^2 s_3 s_4 (s_4 - s_2^2) \textbf{h} \textbf{h}'$.
        \item[(4, 2):] $-s_2 s_3 s_4 (\boldsymbol{\Sigma} - \tau^{-1} \textbf{h} \textbf{h}') - \psi^2 s_2 s_3 (s_6 - s_2 s_4) \textbf{h} \textbf{h}'$.
        \item[(4, 3):] $\psi^2 s_2 s_3 (s_6 - s_2 s_4) \textbf{h} \textbf{h}'$.
        \item[(4, 4):] $s_2 s_3 s_4 (\boldsymbol{\Sigma} - \tau^{-1} \textbf{h} \textbf{h}') + \psi^2 s_3 s_4 (s_4 - s_2^2) \textbf{h} \textbf{h}'$.
        \item[(4, 5):] $-\psi^2 s_2 s_3^3 \textbf{h} \textbf{h}'$.
        \item[(4, 6):] $\psi^2 s_2 s_3^3 \textbf{h} \textbf{h}'$.
        \item[(5, 1):] $\psi^2 s_2^2 s_3 s_4 \textbf{h} \textbf{h}'$.
        \item[(5, 2):] $\psi^2 s_2^3 s_5 \textbf{h} \textbf{h}'$.
        \item[(5, 3):] $-\psi^2 s_2^3 s_5 \textbf{h} \textbf{h}'$.
        \item[(5, 4):] $-\psi^2 s_2^2 s_3 s_4 \textbf{h} \textbf{h}'$.
        \item[(5, 5):] $\psi^2 s_2^4 s_3 \textbf{h} \textbf{h}'$.
        \item[(5, 6):] $-\psi^2 s_2^4 s_3 \textbf{h} \textbf{h}'$.
        \item[(6, 1):] $-\psi^2 s_2^2 s_3 s_4 \textbf{h} \textbf{h}'$.
        \item[(6, 2):] $-s_2^3 s_3 (\boldsymbol{\Sigma} - \tau^{-1} \textbf{h} \textbf{h}') - \psi^2 s_2^3 s_5 \textbf{h} \textbf{h}'$.
        \item[(6, 3):] $\psi^2 s_2^3 s_5 \textbf{h} \textbf{h}'$.
        \item[(6, 4):] $\psi^2 s_2^2 s_3 s_4 \textbf{h} \textbf{h}'$.
        \item[(6, 5):] $-\psi^2 s_2^4 s_3 \textbf{h} \textbf{h}'$.
        \item[(6, 6):] $s_2^3 s_3 (\boldsymbol{\Sigma} - \tau^{-1} \textbf{h} \textbf{h}') + \psi^2 s_2^4 s_3 \textbf{h} \textbf{h}'$.
    \end{itemize}
    \end{multicols}
    The sum of the 36 terms is $s_2 (s_2 s_5 - s_2^2 s_3 - s_3 s_4) (\boldsymbol{\Sigma} - \tau^{-1} \textbf{h} \textbf{h}')$. Hence, the limiting covariance matrix of $\sqrt{n} g_{n\eta}$ is $ \{ 9 w_1^2 s_2^{-1} s_3^2 (s_2 s_4 - s_2^3 - s_3^2) + 24 w_1 w_2 s_2^{-2} s_3 (s_4 - 3 s_2^2) (s_2 s_5 - s_2^2 s_3 - s_3 s_4) + 16 w_2^2 s_2^{-3} (s_4 - 3 s_2^2)^2 (s_2 s_6 - s_2 s_3^2 - s_4^2) \} (\boldsymbol{\Sigma} - \tau^{-1} \textbf{h} \textbf{h}') $. Consequently, the limiting covariance of $\sqrt{n} (\textbf{u}_n - \textbf{u}_0)$ is
    \begin{align*}
        \boldsymbol{\Psi}_\eta =& \left\{\frac{9 w_1^2 s_2^{3} s_3^2 (s_2 s_4 - s_2^3 - s_3^2) + 24 w_1 w_2 s_2^{2} s_3 (s_4 - 3 s_2^2) (s_2 s_5 - s_2^2 s_3 - s_3 s_4)}{\{ 3 w_1 s_2 s_3^2 + 4 w_2 (s_4 - 3 {s}_2^2)^2 \}^2}\right.\\
        &+ \left.\frac{16 w_2^2 s_2 (s_4 - 3 s_2^2)^2 (s_2 s_6 - s_2 s_3^2 - s_4^2)}{\{ 3 w_1 s_2 s_3^2 + 4 w_2 (s_4 - 3 {s}_2^2)^2 \}^2}\right\} \left( \textbf{I}_p - \frac{\boldsymbol{\theta} \boldsymbol{\theta}'}{\| \boldsymbol{\theta} \|^2} \right) \boldsymbol{\Sigma}^{-1} \left( \textbf{I}_p - \frac{\boldsymbol{\theta} \boldsymbol{\theta}'}{\| \boldsymbol{\theta} \|^2} \right).
    \end{align*}
    Using now the expressions for $s_2, s_3, s_4, s_6$ in the proof of Theorem \ref{theo:asnorm_blind} and the analogously obtainable formula $s_5 = \| \boldsymbol{\theta} \|^{-5} (\alpha_1 - \alpha_2) \beta \tau^4 \{ (1 - 2\beta) \tau + 10 \}$, the factor in front of the covariance matrix simplifies to $ C_\eta (1 + \beta \tau)/(\| \boldsymbol{\theta} \|^2 \beta)$, where
    \begin{align*}
        C_\eta =& \frac{ 9 w_1^2 (1 + \beta \tau)^2 (1 - 4\beta) (2 + 6 \beta \tau + \beta \tau^2) + 24 w_1 w_2 \tau^{2} (1 + \beta \tau) (1 - 4 \beta) \beta (1 - 6 \beta) (6 + \tau)}{ \beta \tau^{2} \{ 3 w_1 (1 + \beta \tau) (1 - 4\beta) + 4 w_2 \tau (1 - 6 \beta)^2 \}^2 } \\
        &+ \frac{16 w_2^2 \tau (1 - 6 \beta)^2 \Delta }{ \beta \tau^{2} \{ 3 w_1 (1 + \beta \tau) (1 - 4\beta) + 4 w_2 \tau (1 - 6 \beta)^2 \}^2 }.
    \end{align*}
    and $\Delta := 6 + 24 \beta \tau + 9 \beta (1 - 2 \beta) \tau^2 + \beta (1 - 3 \beta) \tau^3 $.

\end{proof}

\begin{proof}[Proof of Lemma \ref{lem:inner_product}]
    The limiting distribution of $ \{ \sqrt{n}(s_n \textbf{u}_n - \boldsymbol{\theta}/\| \boldsymbol{\theta} \|) \}' \{ \sqrt{n}(s_n \textbf{u}_n - \boldsymbol{\theta}/\| \boldsymbol{\theta} \|) \} $ is the same as that of $ \textbf{z}' \boldsymbol{\Psi} \textbf{z} $. The first claim now follows by observing that,
    \begin{align*}
        \{ \sqrt{n}(s_n \textbf{u}_n - \boldsymbol{\theta}/\| \boldsymbol{\theta} \|) \}' \{ \sqrt{n}(s_n \textbf{u}_n - \boldsymbol{\theta}/\| \boldsymbol{\theta} \|) \} &= 2 n ( 1 - s_n \textbf{u}_n' \boldsymbol{\theta}/\| \boldsymbol{\theta} \|).
    \end{align*}
    Finally, $\mathrm{E}(\textbf{z}' \boldsymbol{\Psi} \textbf{z}) = \mathrm{tr}\{ \boldsymbol{\Psi} \mathrm{E}(\textbf{z} \textbf{z}') \} = \mathrm{tr}(\boldsymbol{\Psi})$.
\end{proof}

\begin{proof}[Proof of Lemma \ref{lem:PCA_fisher}]
    We first show that i) implies ii). The positive-definiteness of $\boldsymbol{\Sigma} $ in conjunction with the relation $\boldsymbol{\Sigma} \textbf{h} = \phi \textbf{h}$ gives that $\boldsymbol{\Sigma}^{-1} \textbf{h} = \phi^{-1} \textbf{h}$. Consequently,
    \begin{align*}
        \mathrm{Cov}(\textbf{x}) \frac{\boldsymbol{\theta}}{\| \boldsymbol{\theta} \|} = (\boldsymbol{\Sigma} + \beta \textbf{h} \textbf{h}') \frac{\boldsymbol{\Sigma}^{-1} \textbf{h}}{\| \boldsymbol{\Sigma}^{-1} \textbf{h} \| } = \frac{\phi}{\| \textbf{h} \|} (1 + \beta \tau) \textbf{h} = \phi  (1 + \beta \tau) \frac{\boldsymbol{\theta}}{\| \boldsymbol{\theta} \|}.
    \end{align*}
    Hence, $\pm \boldsymbol{\theta}/\| \boldsymbol{\theta} \|$ are the unique leading unit-length eigenvectors of $\mathrm{Cov}(\textbf{x})$ if the second-to-largest eigenvalue of $\mathrm{Cov}(\textbf{x})$ is smaller than $ \phi (1 + \beta \tau) $, the eigenvalue corresponding to $\pm \boldsymbol{\theta}/\| \boldsymbol{\theta} \|$.

    To see that ii) implies i), denote the eigenvalue of $ \mathrm{Cov}(\textbf{x}) $ corresponding to $ \pm \boldsymbol{\theta}/\| \boldsymbol{\theta} \| $ by $\rho$. Then, $ (\boldsymbol{\Sigma} + \beta \textbf{h} \textbf{h}') \boldsymbol{\theta} = \rho \boldsymbol{\theta}$ or, equivalently,
    \begin{align*}
        \boldsymbol{\Sigma} \textbf{h} = \frac{\rho}{1 + \beta \tau} \textbf{h},
    \end{align*}
    showing that $\textbf{h}$ is indeed an eigenvector of $\boldsymbol{\Sigma}$ corresponding to the eigenvalue $\phi := \rho/(1 + \beta \tau)$. Finally, since $\pm \boldsymbol{\theta}/\| \boldsymbol{\theta} \|$ are the unique leading unit length eigenvectors of $ \mathrm{Cov}(\textbf{x}) $, we have $\phi_2\{ \mathrm{Cov}(\textbf{x}) \} < \rho = \phi ( 1 + \beta \tau )$, concluding the proof.
\end{proof}

\begin{proof}[Proof of Theorem \ref{theo:asnorm_PCA}]
    The proof of the strong consistency is done similarly as in Theorem~\ref{theo:asnorm_blind} and we omit it. For the limiting normality we again, without loss of generality, assume that $\textbf{x}$ has zero mean, implying that $\textbf{x} \sim \alpha_1 \mathcal{N}_p(-\alpha_2 \textbf{h}, \boldsymbol{\Sigma}) + \alpha_2 \mathcal{N}_p(\alpha_1 \textbf{h}, \boldsymbol{\Sigma})$, where $\textbf{h} = \boldsymbol{\mu}_2 - \boldsymbol{\mu}_1$.

Let $\textbf{u}_1 := \boldsymbol{\theta}/\| \boldsymbol{\theta} \| $, where $\boldsymbol{\theta} = \boldsymbol{\Sigma}^{-1} \textbf{h}$, and recall that it is a leading eigenvector of $\mathrm{Cov}(\textbf{x}) = \boldsymbol{\Sigma} + \beta \textbf{h} \textbf{h}'$. Denote any set of the remaining $p - 1$ eigenvectors by $\textbf{u}_2, \ldots , \textbf{u}_p$ and the corresponding eigenvalues by $ \phi =: \phi_1 > \phi_2 \geq \cdots \geq \phi_p > 0 $.

The gradient of the Lagrangian corresponding to the extraction of the leading unit length eigenvector of $\textbf{C}_n$ is
\begin{align*}
    2 \textbf{C}_n \textbf{u} - 2 \lambda_n \textbf{u},
\end{align*}
where $\lambda_n$ is the Lagrangian multiplier. The gradient vanishes at $s_n \textbf{u}_n$, allowing us to solve the value of $\lambda_n = \textbf{u}_n' \textbf{C}_n \textbf{u}_n$ by multiplying the gradient equation from left with $\textbf{u}_n$. Plugging the multiplier back in gives $(\textbf{I}_p - \textbf{P}_{n}) \textbf{C}_n s_n \textbf{u}_{n} = \textbf{0}$, where $\textbf{P}_{n} := \textbf{u}_{n} \textbf{u}_{n}'$. This equation is equivalent to the following equality,
\begin{align} \label{eq:asymp_1}
     \textbf{A}_{n}  \sqrt{n}(s_n \textbf{u}_n - \textbf{u}_1) = - (\textbf{I}_p - \textbf{P}_{n}) \sqrt{n} \{ \textbf{C}_n - \mathrm{Cov}(\textbf{x}) \} \textbf{u}_1,
\end{align}
where
\begin{align*}
    \textbf{A}_{n} := (\textbf{I}_p - \textbf{P}_{n}) \textbf{C}_n - s_n \phi_1 (\textbf{u}_{n}' \textbf{u}_1) \textbf{I}_p + \phi_1 s_n \textbf{u}_1 \textbf{u}_{n}',
\end{align*}
$\phi_1 = \textbf{u}_1' \mathrm{Cov}(\textbf{x}) \textbf{u}_1 = \| \boldsymbol{\theta} \|^{-2} \tau (1 + \beta \tau)$ and $\tau = \textbf{h}' \boldsymbol{\Sigma}^{-1} \textbf{h} $. Now, by the strong consistency $s_{n} \textbf{u}_{n} \rightarrow \textbf{u}_1$, we have $\textbf{P}_n \rightarrow_p \textbf{u}_1 \textbf{u}_1' =: \textbf{P}$. Consequently,
\begin{align*}
    \textbf{A}_{n} \rightarrow_p (\textbf{I}_p - \textbf{P}) \mathrm{Cov}(\textbf{x}) - \phi_1 \textbf{I}_p + \phi_1 \textbf{u}_1 \textbf{u}_{1}' = \mathrm{Cov}(\textbf{x}) - \phi_1 \textbf{I}_p. 
\end{align*}

Observe then that we have the identity $\textbf{B}_n \sqrt{n}(s_n \textbf{u}_n - \textbf{u}_1) = \textbf{0}$ where $\textbf{B}_n := \textbf{u}_1 \textbf{u}_1' + s_n \textbf{u}_1 \textbf{u}_{n}' \rightarrow_p 2 \textbf{u}_1 \textbf{u}_1' $. As $\sqrt{n} \{ \textbf{C}_n - \mathrm{Cov}(\textbf{x}) \} = \mathcal{O}_p(1)$, summing the previous equation and \eqref{eq:asymp_1}, yields,
\begin{align*}
    \left( \textbf{A}_n + \textbf{B}_n \right) \sqrt{n}(s_n \textbf{u}_n - \textbf{u}_1) = - (\textbf{I}_p - \textbf{P}) \sqrt{n} \{ \textbf{C}_n - \mathrm{Cov}(\textbf{x}) \} \textbf{u}_1 + o_p(1),
\end{align*}
where $\textbf{A}_n + \textbf{B}_n \rightarrow_p 2 \textbf{u}_1 \textbf{u}_1' + \sum_{j=2}^p (\phi_j - \phi_1) \textbf{u}_j \textbf{u}_j'$ and $\phi_j - \phi_1 < 0$ for all $j = 2, \ldots , p$. Hence, by Slutsky's theorem, the limiting distribution of $ \sqrt{n}(s_n \textbf{u}_n - \textbf{u}_1) $ is that of
\begin{align*}
  & - \left( \frac{1}{2} \textbf{u}_1 \textbf{u}_1' + \sum_{j=2}^p \frac{1}{\phi_j - \phi_1} \textbf{u}_j \textbf{u}_j' \right) (\textbf{I}_p - \textbf{P}) \sqrt{n} \{ \textbf{C}_n - \mathrm{Cov}(\textbf{x}) \} \textbf{u}_1\\
  =& - \left( \sum_{j=2}^p  \frac{1}{\phi_j - \phi_1} \textbf{u}_j \textbf{u}_j' \right) \sqrt{n} \{ \textbf{C}_n - \mathrm{Cov}(\textbf{x}) \} \textbf{u}_1.
\end{align*}
Observing that $ (1/n) \sum_i (\textbf{x}_i - \bar{\textbf{x}}) (\textbf{x}_i - \bar{\textbf{x}})' = (1/n) \sum_i \textbf{x}_i \textbf{x}_i' + o_p(1/\sqrt{n})$, the limiting covariance matrix of $ \sqrt{n}(s_n \textbf{u}_n - \textbf{u}_1) $ is hence
\begin{align*}
    \boldsymbol{\Psi}_{\mathrm{PCA}} = \left( \sum_{j=2}^p  \frac{1}{\phi_j - \phi_1} \textbf{u}_j \textbf{u}_j' \right) \mathrm{Cov}\{  (\textbf{u}_1' \textbf{x}) \textbf{x} \} \left( \sum_{j=2}^p  \frac{1}{\phi_j - \phi_1} \textbf{u}_j \textbf{u}_j' \right).
\end{align*}
In the notation of Theorem \ref{theo:asnorm_blind}, we have $\mathrm{Cov}\{  (\textbf{u}_1' \textbf{x}) \textbf{x} \} = \textbf{G}_2 - \textbf{m}_1 \textbf{m}_1' = s_2 (\boldsymbol{\Sigma} - \tau^{-1} \textbf{h} \textbf{h}') + \psi^2 ( s_4 - s_2^2) \textbf{h} \textbf{h}' $, where $ s_2 = \| \boldsymbol{\theta} \|^{-2} \tau ( 1 + \beta \tau) $ and $s_4/s_2^2 = \kappa(\textbf{u}_1) = \kappa(\boldsymbol{\theta})$, yielding the result.

\end{proof}

\section{Additional simulation results}\label{app_simu}
In this section, we give supporting plots as a supplementary material to claims made and plots presented in the article. Simulations and the corresponding plots are done using R 3.6.1 \citep{R} together with R packages  ICtest \citep{ICtest}, mvtnorm \citep{mvtnorm}, MASS \citep{MASS}, GGally \citep{GGally}, ggpubr \citep{ggpubr}, dplyr \citep{dplyr}, tidyr \citep{tidyr} and RColorBrewer \citep{RColorBrewer}.\\

Figures~\ref{fig:as_fig_10} and \ref{fig:as_fig_15} show the standard deviation of maximal similarity index $s_n\textbf{u}_n'\boldsymbol\theta/\|\boldsymbol\theta\|$ where $\textbf{u}_n$ is one of PP estimators discussed in the article, as a function of the Mahalanobis distance between the group means $\tau$ and mixing proportion $\alpha_1$, for sample sizes $n\in\{500,1000,2000,4000\}$ and $n\in\{8000,16000,32000\}$ respectively.

Figure~\ref{fig:as_fig_12} shows a scatter matrix plot of the \textit{finance} data set from the R-package \textit{Rmodmix}, where the point in the plot is being colored red if the company is being bankrupt, and blue otherwise, as well as the marginal densities for both groups which are given at the diagonal.

Figure~\ref{fig:as_fig_13} shows boxplots of the projection scores of the \textit{finance} data set from the R-package \textit{Rmodmix} along the PP directions based on PCA, LDA, kurtosis, skewness and hybrid estimator $\eta_n(\cdot,w_1)$, for $w_1=0.1,0.2,\dots,0.9$ for healthy and bankrupted companies.
\begin{figure}[ht]
\centering
    \includegraphics[width=1\textwidth]{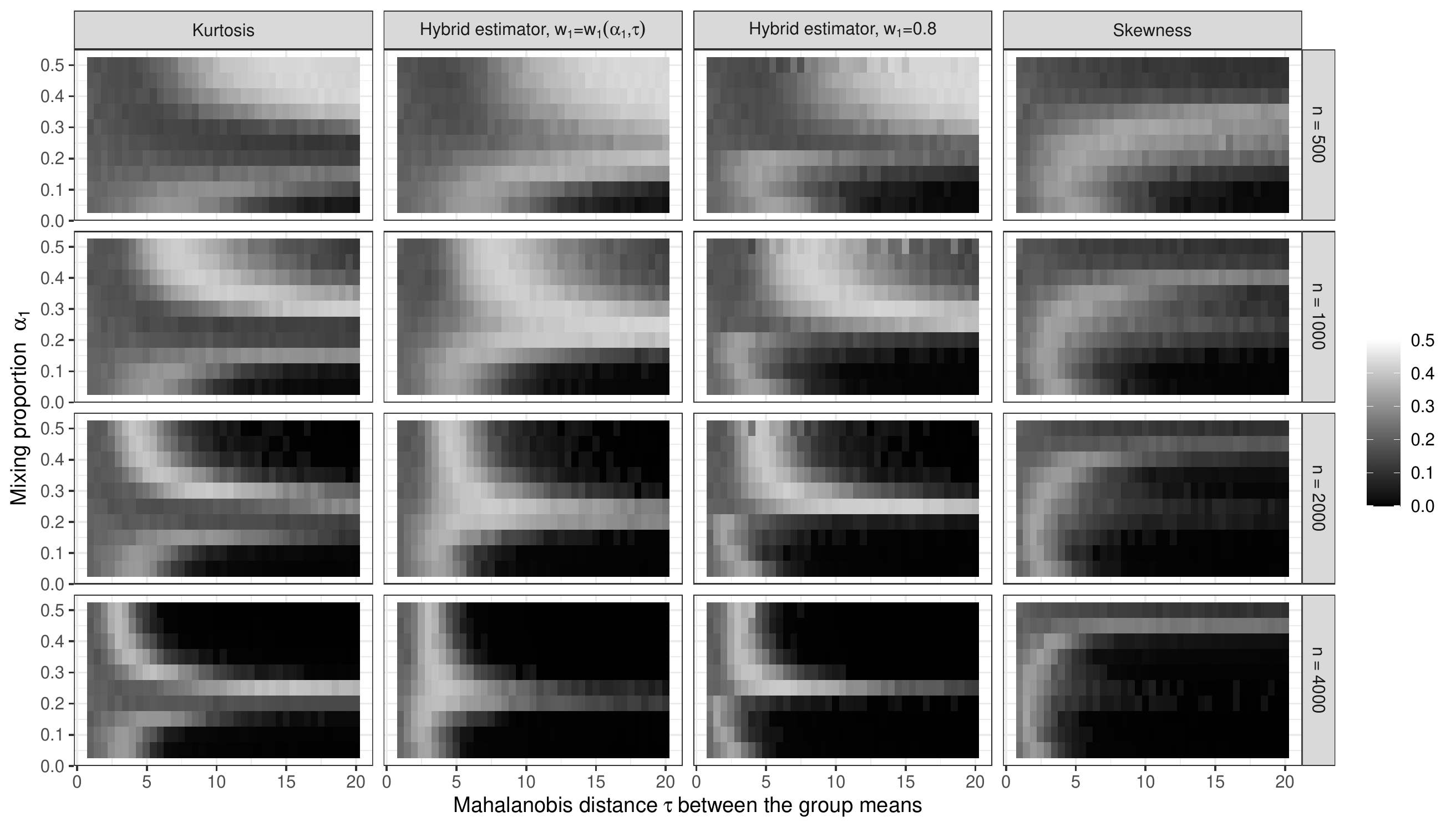}
    \caption{Standard deviation of the MSI  $s_n\textbf{u}_n'\boldsymbol\theta/||\boldsymbol\theta||$ as a function of Mahalanobis distance $\tau$ between the group means and mixing proportion $\alpha_1$, where $\textbf{u}_n$ is one of the four estimators discussed above.}
    \label{fig:as_fig_10}
\end{figure}

\begin{figure}[ht]
\centering
    \includegraphics[width=1\textwidth]{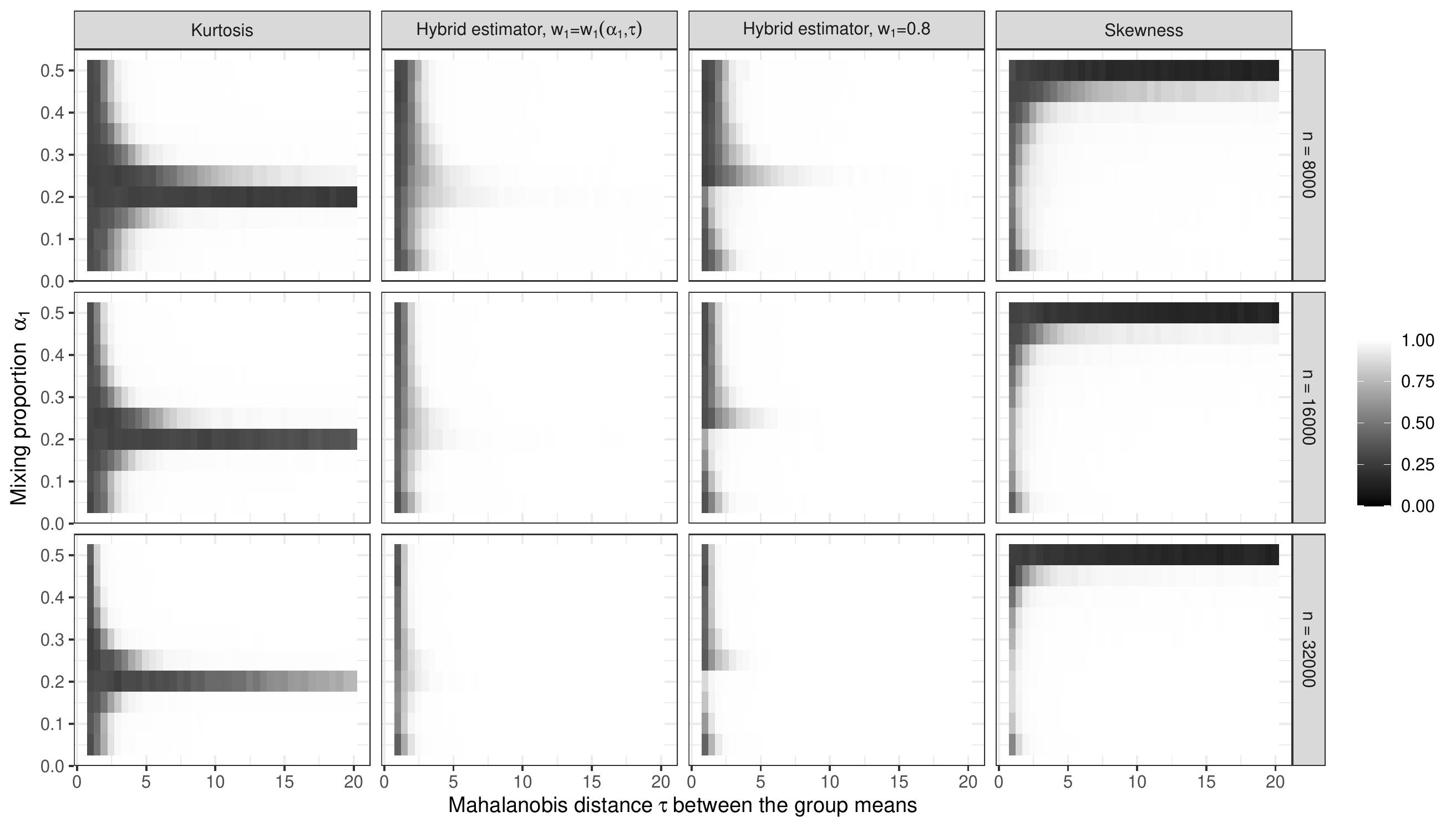}
    \caption{Average values of the MSI  $s_n\textbf{u}_n'\boldsymbol\theta/||\boldsymbol\theta||$ as a function of Mahalanobis distance $\tau$ between the group means and mixing proportion $\alpha_1$, where $\textbf{u}_n$ is one of the four estimators discussed above.}
    \label{fig:as_fig_14}
\end{figure}

\begin{figure}[H]
\centering
    \includegraphics[width=1\textwidth]{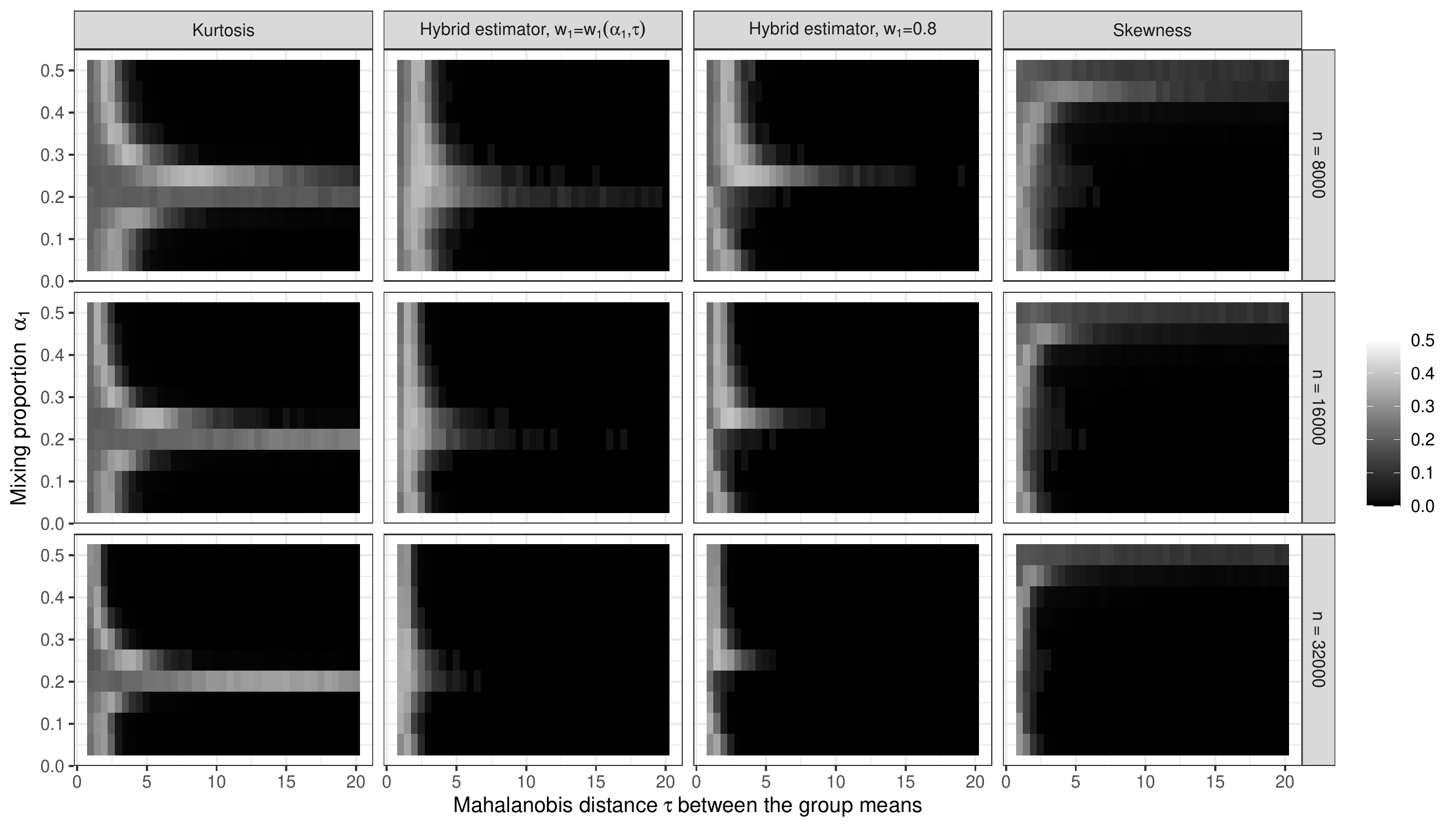}
    \caption{Standard deviation of the MSI  $s_n\textbf{u}_n'\boldsymbol\theta/||\boldsymbol\theta||$ as a function of Mahalanobis distance $\tau$ between the group means and mixing proportion $\alpha_1$, where $\textbf{u}_n$ is one of the four estimators discussed above.}
    \label{fig:as_fig_15}
\end{figure}

\begin{figure}[H]
\centering
    \includegraphics[width=1\textwidth]{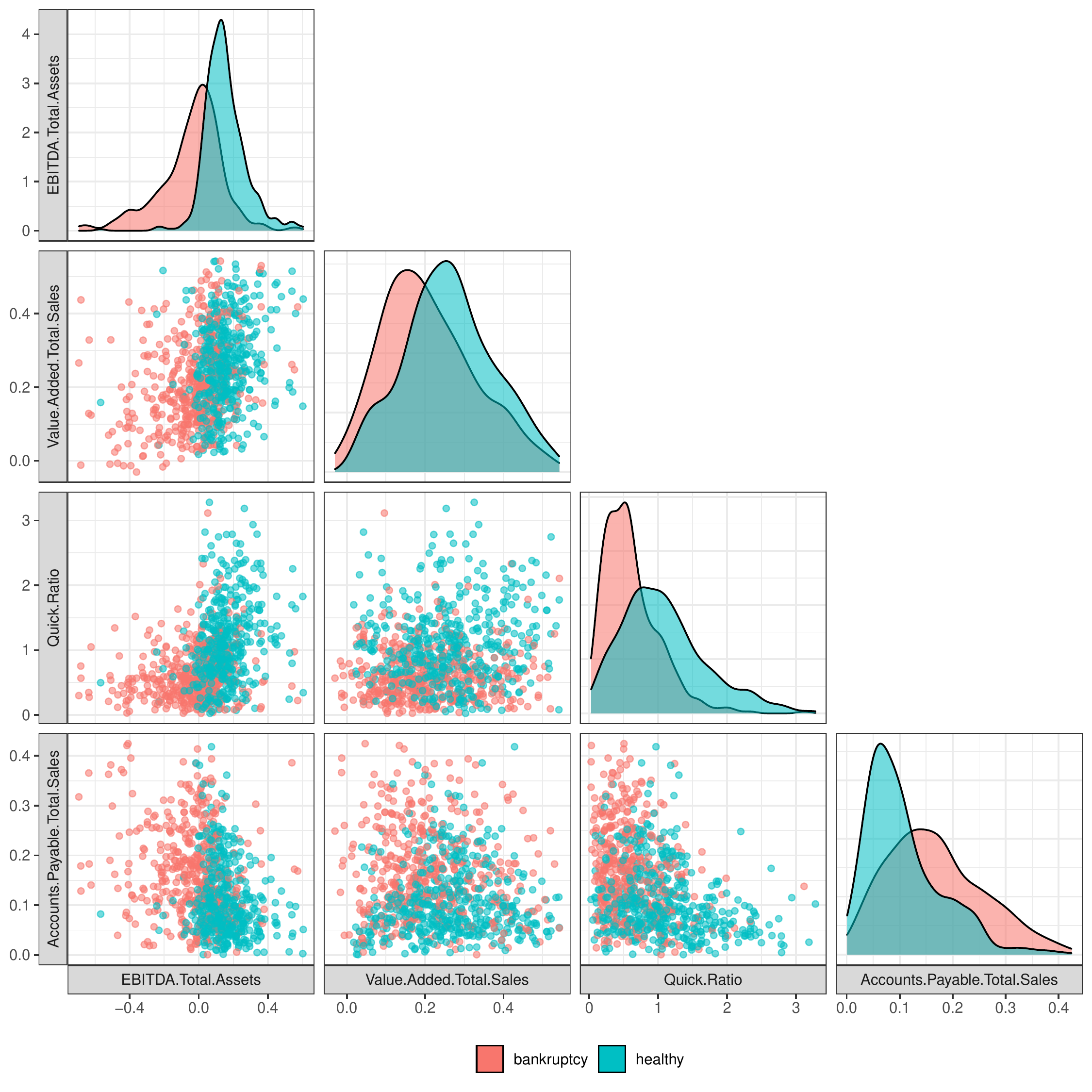}
    \caption{Scatter matrix plot of the \textit{finance} data set, where point is colored red
     if the company is being bankrupt, and blue otherwise.  Marginal densities for both groups are given at the diagonal.}
    \label{fig:as_fig_12}
\end{figure}

\begin{figure}[H]
\centering
    \includegraphics[width=1\textwidth]{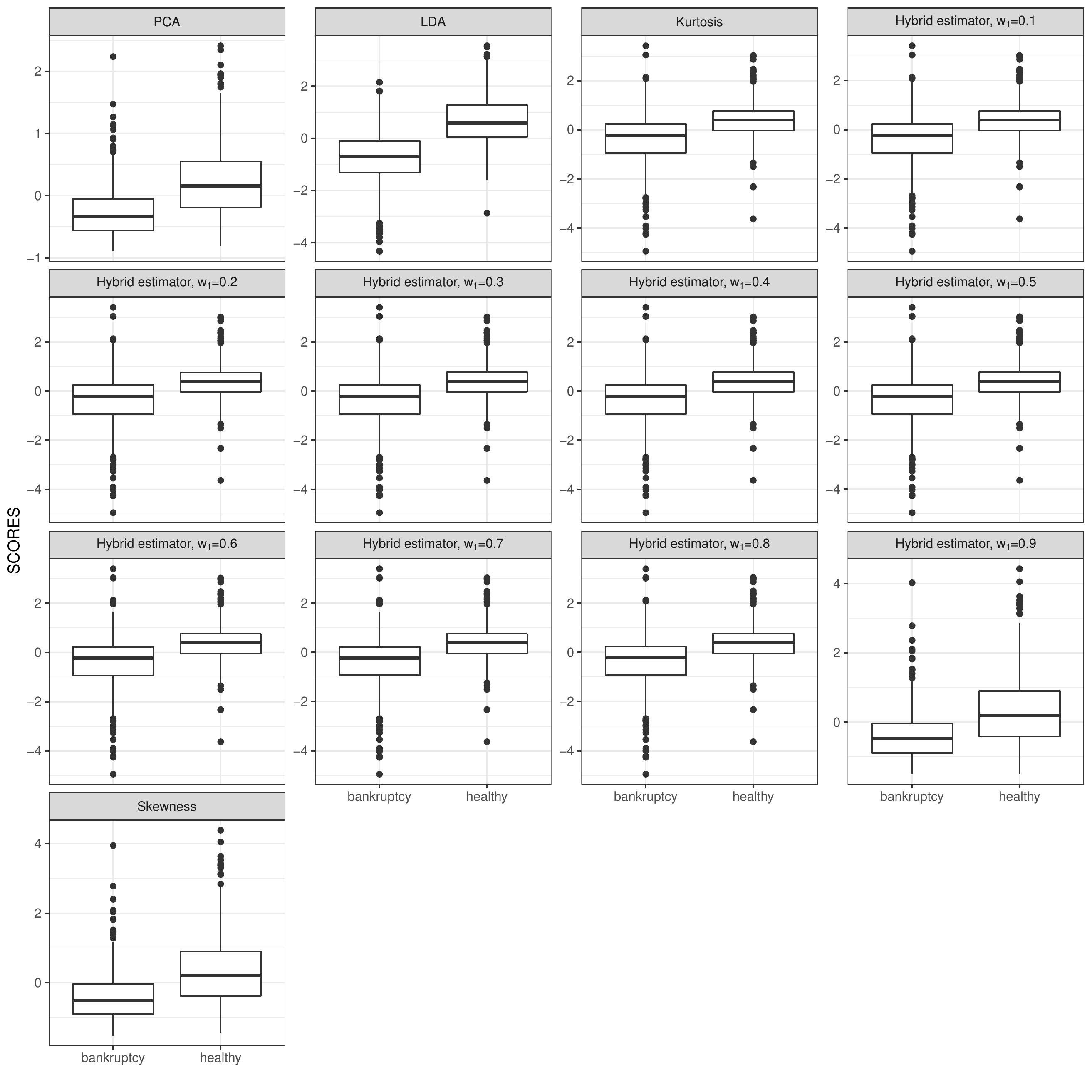}
    \caption{Plot shows boxplots of the projection scores of the \textit{finance} data along the directions based on PCA, LDA, as well as the PP directions obtained by maximizing $(\kappa_n-3)^2$, $\gamma_n^2$ and $\eta_n(\cdot,w_1)$, for $w_1=0.1,0.2,\dots,0.9$ for healthy and bankrupted companies.}
    \label{fig:as_fig_13}
\end{figure}

\newpage
\bibliographystyle{agsm}
\bibliography{refs}

\end{document}